\newif\ifdraft
\numberwithin{equation}{section}
\newtheorem{theorem}{Theorem}[section]
\newtheorem{prop}[theorem]{Proposition}
\newtheorem{cor}[theorem]{Corollary}
\newtheorem{lemma}[theorem]{Lemma}
\newtheorem{remark}[theorem]{Remark}
\newtheorem{definition}[theorem]{Definition}
\newcommand{\jump}[1]{\text{{\rm \textlbrackdbl}}{#1}\text{{\rm \textrbrackdbl}}}
\newcommand{\Om}{\Omega}
\def\area{\mathcal A}
\def\cof{\,{\rm cof\ }}
\def\curl{\,{\rm Curl\ }}
\def\det{\,{\rm det\ }}
\def\dist{\,{\rm dist}}
\def\div{\,{\rm div\ }}
\newcommand{\R}{\mathbb{R}}
\newcommand\Suno{{\mathbb S}^1}
\newcommand{\res}      {\mathop{\hbox{\vrule height 7pt width .5pt depth
                        0pt\vrule height .5pt width 6pt depth 0pt}}\nolimits}
\title{On the relaxation of polyconvex functionals with linear growth under strict convergence in $BV$
}
\author{
	Riccardo Scala\footnote{ 
		Dipartimento di Ingegneria dell'Informazione e Scienze Matematiche, Universit\`a di Siena, 53100 Siena, Italy.
		E-mail: riccardo.scala@unisi.it}
}
\begin{document}

\maketitle

\begin{abstract}
We consider the relaxation of polyconvex functionals with linear growth with respect to the strict convergence in the space of functions of bounded variation. These functionals appears as relaxation of  $F(u,\Omega):=\int_\Omega f(\nabla u)dx$, where $u:\Om\rightarrow \R^m$, and $f$ is polyconvex. In constrast with the case of relaxation with respect to the standard $L^1$-convergence, in the case that $\Omega$ is $2$-dimensional, we prove that the sets map $A\mapsto F(u,A)$ for $A$ open, is, for every $u\in BV(\Om;\R^m)$, $m\geq1$, the restriction of a Borel measure. This is not true in the case $\Omega\subset\R^n$, with $n\geq3$. Using the integral representation formula for a special class of functions, we also show the presence of Cartesian maps whose relaxed area functional with respect to the $L^1$-convergence is strictly larger than the area of its graph. 
\end{abstract}

\noindent {\bf Key words:}~~Polyconvexity, Plateau problem, relaxation, area functional, minimal surfaces, Cartesian maps, integral representation.

\vspace{2mm}

\noindent {\bf AMS (MOS) 2020 Subject Clas\-si\-fi\-ca\-tion:}  49J45, 49Q05, 49Q15, 28A05, 28A75, 74B20.

\section{Introduction}
Polyconvexity arises in non-linear elasticity as in many branches of mechanics of solids, and is a more realistic hypothesis on the energy functional than just convexity \cite{Ball}.
The setting under consideration in this paper is the one where the growth of the involved functional is linear, circumstance in which the standard lower semicontinuity results  \cite{GMS3,FuHu:95} do not apply. 

Given an open bounded set $\Omega$, the prototype example of energy with this growth condition is provided by the area functional that, given a map $u:\Om\subset\R^n\rightarrow \R^m$ smooth enough, computes the $n$-dimensional Hausdorff measure of the graph $G_u:=\{(x,y)\in \Om\times \R^m:y=u(x)\}$ of $u$. Thanks to the area formula, the area functional takes the form
\begin{align}
	\mathbb A(u,\Om):=\int_\Om|\mathcal M(\nabla u)|dx,
\end{align}  
where $\mathcal M(\nabla u)$ is the vector whose entries are all the determinants of the $k\times k$-submatrices of $\nabla u$, $k=0,\dots,\min\{n,m\}$ (the $0\times0$ determinant is conventionally taken as $1$). More generally, we consider energies such as 
\begin{align}\label{functionalF}
F(u,\Om)=\int_\Om f(\nabla u)dx,
\end{align}
where $f$ is polyconvex, that is, there exists a convex function $g$ such that 
\begin{align}\label{functionalF2}
	f(\nabla u)=g(\mathcal M(\nabla u)).
\end{align}
The condition of linear growth considered in \cite{ADM} is expressed by the relation
\begin{align}\label{growth_g}
g(\mathcal M(\nabla u))\geq c_0|\mathcal M(\nabla u)|,
\end{align}
for some positive constant $c_0$. 
Due to the lack of lower semicontinuity of this  kind of functionals a relaxation procedure is necessary. This approach has been studied  in \cite{ADM}, where the authors considered the $L^1$-relaxation of $F$ given by
\begin{align}\label{rel_L1}
\mathcal F^{L^1}(u,\Om)=\inf\{\liminf_{k\rightarrow\infty}F(u_k,\Om):(u_k)\subset C^1(\Om;\R^m),\;u_k\rightarrow u\text{ in }L^1(\Om;\R^m)\},
\end{align}
and defined for any $u\in L^1(\Om;\R^m)$. The relaxed functional $\mathcal F^{L^1}$ turns out to be $L^1$-lower semicontinuous and extend the functional $F$ from $C^1(\Om;\R^m)$ to $L^1(\Om;\R^m)$. However,  the behaviour of $\mathcal F^{L^1}$ is extremely wild, due to non-local phenomena that arise already for the relaxed area functional as soon as $n,m>1$. 
 Apart from the $1$-dimensional case ($n=1$) that is much simpler, assuming $n\geq2$, there is a big difference between the one codimensional case ($m=1$) and the higher codimensional one. Indeed, if $u$ is scalar valued, then the functional $\mathcal F^{L^1}$ is local and admits an integral representation: In the special case of the relaxed area functional, which we denote by $\mathcal A^{L^1}$, it can be proved that the domain of $  \mathcal A^{L^1}$ is the space $BV(\Om)$ and that
 \begin{align}
\mathcal A^{L^1}(u,\Om)=\int_\Om\sqrt{1+|\nabla u|^2}dx+|D^su|(\Om), \qquad \qquad \forall u\in BV(\Om),
 \end{align}
where $\nabla u$ denotes the approximate gradient of $u$ and $D^su$ the singular part of the distributional derivative $Du$ of $u$. A similar expression in terms of the recession function of $F$ holds in the  case of general function $g$ (see \cite{DalMaso:80}).

Instead, the case $m\geq2$ does not enjoy so good properties: For general $u\in BV(\Om;\R^m)$ it can be proved only that 
 \begin{align}\label{1.6}
	\mathcal A^{L^1}(u,\Om)\geq \int_\Om\sqrt{1+|\nabla u|^2}dx+|D^su|(\Om), \qquad \qquad \forall u\in BV(\Om;\R^m),\nonumber\\
		\mathcal F^{L^1}(u,\Om)\geq \int_\Om g(\mathcal M(\nabla u))dx+c_0|D^su|(\Om), \qquad \qquad \forall u\in BV(\Om;\R^m),
\end{align}
and that there exist maps of bounded variations for which $\mathcal A^{L^1}$ (and $\mathcal F^{L^1}$) is $+\infty$ (see \cite{BePaTe:15,BePaTe:16}). The domain of $\mathcal A^{L^1}$, namely the set of maps for which $\mathcal A^{L^1}$ is finite, is a subset of $BV(\Om;\R^m)$, whose precise description is not available. Moreover, it has been proved in \cite{ADM} that, for a fixed $u\in BV(\Om;\R^m)$, the set function $A\subset\Om\rightarrow \mathcal F^{L^1}(u,A)$  is not in subadditive, and thus $\mathcal F^{L^1}$ does not admit any integral representation. This is true also for the area functional, where the non-subadditivity property has been incountered already for two simple examples of functions: The vortex map $u_V$ and the triple junction function $u_T$. The former is the Sobolev map $u_V(x)=\frac{x}{|x|}$ in the ball $\Om=B_R(0)\subset\R^2$, the latter $u_T:B_R(0)\subset\R^2\rightarrow \{\alpha,\beta,\gamma\}$ is a piecewise constant map assuming three values that are the three vertices of an equilateral triangle in $\R^2$. For both these functions,  suggested by De Giorgi in \cite{DeGiorgi:92}, Acerbi and Dal Maso proved the non-subadditivity property exploiting suitable lower and upper bounds for $\mathcal A^{L^1}$. Also, the precise values of $\mathcal A^{L^1}(u_V,B_R(0))$ and $\mathcal A^{L^1}(u_T,B_R(0))$ were not available at that time, and only recently it has been possible to find them explicitely (see \cite{BePa:10,Scala:19,BES1,BES2,BES3}). In the last references, it is clear how the nonlocality of $\mathcal A^{L^1}(u_V,\cdot)$ and $\mathcal A^{L^1}(u_T,\cdot)$ pops up: 
In the former case, we have 
\begin{align}
\mathcal A^{L^1}(u_V,B_R(0))= \int_{B_R(0)}\sqrt{1+|\nabla u_V|^2}dx+\mathcal H^2(C_R),
\end{align}
where $\mathcal H^2(C_R)$ is the $2$-dimensional Hausdorff measure of a minimal surface $C_R$ obtained by solving a particular non-parametric Plateau problem with partial free boundary in codimension $1$. This object, whose shape is (the half of) a sort of catenoid constrained to contain a segment, is a suitable projection in $\R^3$ of the vertical part of the cartesian current $S$ obtained as limit of the graphs $ G_{u_k}$ of a recovery sequence $(u_k)\subset C^1(B_R;\R^2)$ for $  \mathcal A^{L^1}(u_V,B_R(0))$ (see \cite{BES2} for the non-parametric Plateau problem and \cite{BES1,BES3} for the computation of $\mathcal A^{L^1}(u_V,B_R(0))$). The radius $R>0$ represents  the height of the catenoid, and hence the area of $C_R$ depends on $R$, in such a way that 
$\mathcal H^2(C_R)\leq 2\pi R$; for $R$ larger than a certain threshold it happens that $\mathcal H^2(C_R)=\pi$.
A similar phenomenon is observed for $u_T$, where the singular contribution in $\mathcal A^{L^1}(u_T,B_R(0))$ is provided by the area of three minimal surfaces in $\R^3$ solving a nonparametric Plateau-type problem with partial free boundary. Also in this case, these minimal surfaces have the role of filling the holes in the graph of $G_{u_T}$, hence arising as vertical parts of the cartesian current obtained as limit of the graphs $G_{u_k}$ of a recovery sequence $(u_k)\subset C^1(B_R;\R^2)$ for $  \mathcal A^{L^1}(u_T,B_R(0))$ (see \cite{BePa:10,Scala:19}).

The relaxed area of $u_V$ and $u_T$ in a ball $B_R(0)$ are the unique non-trivial cases in which $\mathcal A^{L^1}(u,\Om)$ is explicit, and minimal changes in the geometry of the domain or on the choice of the function $u$ makes the computation of $\mathcal A^{L^1}(u,\Om)$ out of reach; in more general cases, only (non-sharp) upper bounds are available, as in \cite{BSS} for the case of Sobolev maps with values in $\mathbb S^1$ (thus generalizing the vortex map) and in \cite{BeElPaSc:19,ScSc} for the case of piecewise constant functions taking three values (hence generalizing the triple junction function). In any case, we believe that the vertical parts of cartesian currents obtained as limits of the graphs $G_{u_k}$ of a recovery sequence $(u_k)\subset C^1(B_R;\R^2)$ can be often described, in a similar fashion as for $u_T$ and $u_V$, as minimal surfaces  arising as solutions of non-parametric Plateau problems with partial free boundaries (see \cite{BMS}) or semicartesian Plateau problems (see \cite{BePaTe:15,BePaTe:16}). 

One of the issue encontered in the analysis of the relaxation in \eqref{rel_L1} is that, when one considers, for $u\in BV(\Om;\R^m)$, a sequence $(u_k)\subset C^1(\Om;\R^m)$ realizing the infimum (i.e., a so-called recovery sequence), then the limit of the graphs $G_{u_k}$ in $\Om\times \R^m$, seen as integral currents, cannot be easily identified. Indeed, it is only known that 
$$G_{u_k}\rightharpoonup G_u+V_{{\rm min}}=:S_{{\rm min}},$$
where $V_{{\rm min}}$ is called vertical part, and is such that $\partial V_{{\rm min}}=-\partial G_u$. But unless few general properties on $V_{{\rm min}}$ (that are common to vertical parts of cartesian currents, see \cite{GMS2}) nothing can be said, a priori,  on its geometry. The knowledge of $V_{{\rm min}}$ would give rise, at least for the area functional, the trivial lower bound (which follows by lower-semicontinuity of the mass)
$$\mathcal A^{L^1}(u,\Om)\geq |S_{{\rm min}}|=|G_u|+|V_{{\rm min}}|,$$
where by $|\cdot|$ we indicate the total mass of a current.
However, $V_{{\rm min}}$ strongly depends on $\Om$, in general, and this is the main reason of non-locality of $\mathcal A^{L^1}$ (and of $\mathcal F^{L^1}$). 

In contrast, this phenomenon disappears, at least in the case $n=2$, if one consider the relaxation of $F$ with respect to strict topology in $BV(\Om;\R^m)$. Namely, let us consider, for $\Om\subset\R^2$ and for all $u\in BV(\Om;\R^m)$, the functional 
\begin{align}\label{rel_strict}
\mathcal F(u,\Om)=\inf\{\liminf_{k\rightarrow\infty}F(u_k,\Om):(u_k)\subset C^1(\Om;\R^m),\;u_k\rightarrow u\text{ strictly in  }BV(\Om;\R^m)\}.
\end{align}
It is then possible to show that if $u_k\subset C^1(\Om;\R^m)$ converges to $u$ strictly in $BV(\Om;\R^m)$ and $\mathbb A(u_k,\Om)<C<+\infty$ for all $k$, then
\begin{align}\label{19}
G_{u_k}\rightharpoonup G_u+V_{{\rm strict}}=:S_{{\rm strict}} \qquad \qquad \text{ as currents,} 
\end{align}
where $V_{{\rm strict}}$ (and hence $S_{{\rm strict}}$) is uniquely determined and does not depend on the specific sequence $u_k$. This result has been proved in \cite{Mucci}, where relaxation in \eqref{rel_strict} has been considered for the area functional. The relaxed area functional under strict convergence has been analyzed more in detail in \cite{BCS,BCS2,C,CM}. 
 Due to the more restrictive request that $u_k$ approximate $u$ in the strict topology, it is straightforward that 
 $$\mathcal F(u,\Om)\geq \mathcal F^{L^1}(u,\Om),$$
and strict inequality often occurs. In fact also the domain of $\mathcal F(u,\Om)$ is strictly smaller than that of $\mathcal F^{L^1}(u,\Om)$ (precisely, there exists $u\in BV(\Om;\R^m)$ for which $\mathcal A^{L^1}(u,\Om)$ is finite and $\mathcal A(u,\Om)$ is $+\infty$, see \cite{BCS2}). 

As a consequence of \eqref{19}, for the relaxed area functional $\mathcal A(u,\Om)$, it holds
\begin{align}\label{lbstrict}
\mathcal A(u,\Om)\geq |S_{{\rm strict}}|=|G_u|+|V_{{\rm strict}}|=\int_\Om|\mathcal M(\nabla u)|dx+|V_{{\rm strict}}|.
\end{align}
This provides a natural lower bound for $\mathcal A(u,\Om)$, since $V_{{\rm strict}}$ is uniquely determined by $u$. However, it has been observed \cite{Mucci}
that also in this case the strict inequality can occurs in \eqref{lbstrict}, so the lower bound is not optimal (see also \cite{BCS,BCS2,C}). On the other hand, following the analysis of \cite{BCS,BCS2,C}, in the case that $\Om\subset\R^2$, all the phenomena related to non-subadditivity of the set function $A\mapsto \mathcal A(u,A)$ seemed to disappear, at least for a suitable class of maps of bounded variation $u$, so it has been conjectured that actually the set function $A\mapsto \mathcal A(u,A)$ is the trace of a Borel measure restricted to the class of open sets. This conjecture has been disproved in the case $\Om\subset\R^3$ in \cite{CM}, where the authors show that already for the vortex map $u_V(x)=\frac{x}{|x|}$ some similar phenomena as in dimension $2$  for $\mathcal A^{L^1}$ take place. However it remained an open problem to undestand if in dimension $2$ the conjecture is true.

 In the present paper we show this conjecture, which actually applies also for the more general polyconvex functionals $\mathcal F$:

\begin{theorem}\label{teo_main1}
Let $\Om\subset\R^2$ be an open and bounded set, let $m\geq 1$, and let $u\in BV(\Om;\R^m)$; then the function $A\mapsto \mathcal F(u,A)$, defined for all open sets $A\subseteq\Om$, is the restriction of a Borel measure. 
\end{theorem}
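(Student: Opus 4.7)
The plan is to verify the four hypotheses of the De Giorgi--Letta criterion for the set function $\lambda(A):=\mathcal{F}(u,A)$ on open subsets of $\Omega$: monotonicity, superadditivity on disjoint opens, subadditivity, and inner regularity $\lambda(A)=\sup\{\lambda(U):U\Subset A\}$. Monotonicity is immediate. Superadditivity on disjoint $A, B$ reduces to the following restriction property for strict convergence: if $u_k\in C^1(A\cup B;\R^m)$ converges to $u$ strictly in $BV(A\cup B;\R^m)$, then $u_k|_A$ and $u_k|_B$ converge strictly on $A$ and on $B$ separately. This follows by combining $L^1$-convergence with the weak-$*$ lower semicontinuity of the total variation on each open set and the identity $|Du_k|(A\cup B)\to|Du|(A\cup B)=|Du|(A)+|Du|(B)$, which force $|Du_k|(A)\to|Du|(A)$ and $|Du_k|(B)\to|Du|(B)$. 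Additivity of $F$ over disjoint opens then yields $\lambda(A)+\lambda(B)\leq \lambda(A\cup B)$, and the same restriction principle applied to $U\Subset A$ with $|Du|(\partial U)=0$ gives $\sup_{U\Subset A}\lambda(U)\leq\lambda(A)$.

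The core step is subadditivity $\lambda(A\cup B)\leq\lambda(A)+\lambda(B)$, which I would prove by a gluing construction. Given recovery sequences $v_k\in C^1(A;\R^m)$ and $w_k\in C^1(B;\R^m)$ converging strictly in $BV$ on $A$ and $B$ respectively, I construct $u_k\in C^1(A\cup B;\R^m)$ by a De Giorgi averaging on a thin annular region inside the overlap $A\cap B$: choose a one-parameter family of smooth cutoffs $\varphi_\tau$ interpolating between a neighbourhood of $A\setminus B$ and a neighbourhood of $B\setminus A$, set $u_k^\tau:=\varphi_\tau v_k+(1-\varphi_\tau) w_k$, and pick by a Fubini averaging over $\tau$ a value $\tau_k$ for which the transition energy $F(u_k^{\tau_k},\text{annulus})$ is controlled by the mean over $\tau$. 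The decisive two-dimensional input, and the main obstacle, is to show that this averaged transition energy is infinitesimal as $k\to\infty$. The linear part of the energy can be handled by the $L^1$-closeness $v_k-w_k\to 0$ on $A\cap B$; the real difficulty lies with the polyconvex (higher-order minor) contributions. Here Mucci's uniqueness \eqref{19} is essential: on the overlap both currents $G_{v_k}$ and $G_{w_k}$ converge weakly to the same limit $(G_u+V_{\mathrm{strict}})\res((A\cap B)\times\R^m)$, because in $n=2$ the vertical part $V_{\mathrm{strict}}$ depends only on $u$. This current-level alignment, combined with a Reshetnyak-type continuity of $F$ along strictly convergent sequences on sets whose boundary is not charged by the limit, is what makes the averaged transition energy vanish and prevents the non-local artifacts that invalidate the result when $n\geq 3$.

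Once subadditivity is established, the remaining half of inner regularity, $\lambda(A)\leq\sup_{U\Subset A}\lambda(U)$, follows by the same gluing recipe, now used to glue a recovery sequence for $\lambda(U)$ (with $U\Subset A$ and $|Du|(\partial U)=0$) on the inner region with a standard mollification $u*\rho_{\eta_k}$ on a thin outer collar $A\setminus\bar U$; by the growth condition \eqref{growth_g} and strict convergence of the mollification, the outer contribution is bounded by a quantity comparable to $|Du|(A\setminus\bar U)$, which is arbitrarily small for $U$ close to $A$. The De Giorgi--Letta criterion then produces the Borel measure extending $\lambda$, completing the proof.
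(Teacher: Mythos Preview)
There are two genuine gaps.

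\textbf{Monotonicity is not immediate.} If $B\subset A$ and $(u_k)$ is a recovery sequence for $\mathcal F(u,A)$, then $u_k\to u$ strictly in $BV(A)$ does \emph{not} imply $u_k|_B\to u|_B$ strictly in $BV(B)$: strict convergence passes to an open subset only when $|Du|(\partial B\cap A)=0$, which fails for a generic open $B$. The paper spends considerable effort here (Proposition~\ref{prop31}, the monotonicity theorem in Section~\ref{sec_proof}): it builds exhausting sets $B_n$ with $C^3$ boundaries on which $|Du|$ does not charge, modifies the recovery sequence so that its traces on $\partial B_n$ converge strictly to $u|_{\partial B_n}$, and only then obtains restriction. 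Your one-line dismissal skips all of this.

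\textbf{The cutoff gluing does not control the polyconvex part.} Writing $u_k^\tau=\varphi_\tau v_k+(1-\varphi_\tau)w_k$, the minors $M^{ij}_{12}(\nabla u_k^\tau)$ contain, besides the expected ``good'' terms, cross products of the form $\partial_1 v_k^i\,\partial_2 w_k^j$ and terms $\partial_\ell\varphi_\tau\,(v_k-w_k)^i\,\partial_p v_k^j$. Neither $\|v_k-w_k\|_{L^1}\to 0$ nor the weak convergence of the graph currents $G_{v_k},G_{w_k}$ to the common limit $G_u+V_{\mathrm{strict}}$ bounds these products in $L^1$ on the transition annulus; Mucci's uniqueness is a statement about the limit current, not about pointwise alignment of $\nabla v_k$ and $\nabla w_k$, and Reshetnyak continuity applies to $F$ along a \emph{single} strictly convergent sequence, not to $F$ evaluated on a convex combination of two different sequences. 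This is exactly why the naive cutoff fails already for the area and for $TVJ$, and why the paper does something entirely different. The paper's gluing (Lemma~\ref{lem_glueing1}) exploits that $\partial B$ is one-dimensional: on a curve, strict $BV$-convergence upgrades, after the arc-length-type reparametrisation of Proposition~\ref{prop_convstrict}, to \emph{uniform} convergence of the reparametrised traces $\overline v_k,\overline w_k$. The interpolation is then built from $\Phi_{\varphi,\psi}$ and $\Psi_{\varphi,\psi}$, whose Jacobian determinants are explicitly small (one vanishes identically, the other is bounded by $\|\overline v_k-\overline w_k\|_{L^\infty}$). This is the genuinely two-dimensional mechanism, and it is absent from your proposal.
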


The above result applies to all polyconvex functionals of the form \eqref{functionalF} satisfying \eqref{functionalF2} for a general convex function $g$ that is linear or sublinear, in the sense that there exists a positive constant $C_g$ with 
\begin{align}\label{growth1_intro}
g(\mathcal M(\nabla u))\leq C_g(|\mathcal M(\nabla u)|+1).
\end{align}
At the same time, we assume also some coercivity property of $g$ (see \eqref{growth2} below), that in the case in which $n=m=2$, it is expressed as
\begin{align}\label{growth2_intro}
c_g|\det(\nabla u)|\leq g(\mathcal M(\nabla u))
\end{align}
for some positive constant $c_g$ (and that are weaker than \eqref{growth_g}).
With these two requirements we includes in our analysis the interesting prototype cases of the area functional $g(\mathcal M(\nabla u))=|\mathcal M(\nabla u)|$ and of the total variation of the Jacobian functional, i.e., the functional (in the case $n=m=2$)
\begin{align}\label{TVJ}
TVJ(u,\Om):=\int_\Om|\det(\nabla u)|dx,
\end{align}
defined for $u:\Om\rightarrow \R^2$.

In order to show Theorem \ref{teo_main1} we apply the standard result due to De Giorgi and Letta which characterizes the maps on open sets which are Borel measures  (see Theorem \ref{DGL_teo} below). 
This accounts to check monotonicity, additivity, subadditivity, and inner regularity of the set function $A\mapsto \mathcal F(u,A)$, defined for $A$ open. Although additivity on disjoint set is straightforward, notice that already monotocity is non-trivial, due to the fact that, if $B\subset A$, the restriction of a recovery sequence for $\mathcal F(u,A)$ to $B$ is not necessarily converging strictly to $u$ on $B$. So, accurate modifications of recovery sequence are necessary.

A fundamental step to show subadditivity and inner regularity is Proposition \ref{prop:recoveryrestriction}. Under suitable conditions on $u$ and $B\subset\subset A$, it states that if $u_k$ is a recovery sequence for $\mathcal F(u,A)$ and $u_k\res \partial B$ strictly converges to $u\res \partial B$, then $u_k\res B$ is a recovery sequence for $\mathcal F(u,B)$. To prove Proposition \ref{prop:recoveryrestriction} we assume that $v_k$ is a recovery sequence for $\mathcal F(u,B)$  and we consider a map $w_k$ obtained by glueing $v_k$ and $u_k\res (A\setminus B)$ on a tubular neighborhood of $\partial B$. We show that this can be done by modifying $v_k$ and $u_k\res (A\setminus B)$ a little bit so that their energy does not increase too much; this is possible thanks to the assumption of strict convergence of $u_k$ to $u$ on $\partial B$, since Proposition \ref{prop_convstrict} allows to reparametrize $u_k\res \partial B$ in such a way that it can be glued to $v_k\res \partial B$ by a tricky interpolation argument. This is a crucial point, which is possible only because the set $\partial B$ is $1$-dimensional, and this argument fails in the case $B\subset \R^n$ with $n\geq3$ (this is related with the fact that a the total minimal lifting of $u$ is unique, see \cite{Mucci}, that is not true in dimension greater than $2$). 
To apply the previous interpolation between $v_k$ and $u_k\res (A\setminus B)$ we need that $v_k\res \partial B$ also converges to $u$ strictly on $\partial B$. This is not always true, and requires an ad hoc modification of a recovery sequence $v_k$ for $\mathcal F(u,B)$. 
A key ingredient  in order to modify recovery sequences is the fact that strict convergence on an open set $A\subset\R^2$ is inherited on suitable curves $\Gamma\subset A$. This allows to conclude that $v_k$ converges strictly to $u$ on almost every level set of the distance function $d(\cdot,\partial B)$. With ad hoc transformation in tubular neighborhood of $\partial B$, we can then modifying $v_k$, not changing $F(v_k,B)$ too much, in order that the modified sequence converges strictly to $u$ on $\partial B$ (see Lemma \ref{lem_recoverymodified}).

In view of Theorem  \ref{teo_main1} we expect that, at least for the area and total variation of the Jacobian functional, a suitable integral representation is possible. We provides in Section \ref{sec_representations} some examples of known results. Using these, it is possible to show that for the standard relaxation of the area functional with respect to the $L^1$ convergence, the presence of singular contribution is not only due to the presence of holes (or singularities) in the graph of the considered map. Indeed, even if a map $u:\Om\rightarrow \R^2$ is Cartesian (i.e., its graph $G_u$ has not holes, namely $\partial G_u=0$ as current in $\mathcal D_1(\Om\times \R^2)$), it is possible that the relaxed area $\mathcal A^{L^1}(u,\Om)$ is strictly larger than the $2$-dimensional hausdorff measure of $G_u$ (in other words, a singular contribution due to relaxation pops up). This is our second main result, summarized in Theorem \ref{teo_main2} in Section \ref{sec_representations}.

We emphasize that an integral representation of this kind of functionals as in \cite{DalMaso:80} is not possible if we relax with respect to the $L^1$-topology, due to the lack of sub-additivity of $A\mapsto \mathcal F^{L^1}(u,A)$, unless one requires more restrictive growth conditions on $g$ (see for instance \cite{Fons1,Fons2,soneji}).

The structure of the paper is as follows: In the next Section \ref{sec_notation} we introduce some standard notation and in its Subsection \ref{sec_setting} we recall the setting of the problem. In Section \ref{sec_preliminaryresults} we start with measure theoretic, geometry tools, and preliminary results; further in Section \ref{subsec_properties} we start by describing of to modify Lipschitz maps in order to cut and paste suitable recovery sequences for $\mathcal F(u,\Om)$.
In Section \ref{sec_proof} we finally give the proof of Theorem \ref{teo_main1}, exploiting De Giorgi and Letta Theorem, and thus checking that standard conditions of the set map $A\mapsto \mathcal F(u,A)$ are satisfied. In Section \ref{sec_representations} we exhibit some known result of representation formulas for the area functional (and for the total variation of the Jacobian one); motivated by this, we introduce the double $8$-curve map $u_\varphi$, which is a $0$-homogeneous Cartesian map and we show in Theorem \ref{teo_main2} that $$\mathcal A^{L^1}(u_\varphi,B_r(0))>\int_{B_r(0)}\sqrt{1+|\nabla u_\varphi|^2}dx.$$
The paper ends with an Appendix where we collect a couple of standard results used in the manuscript.

\section{Notation and Setting}\label{sec_notation}

\subsection{Notation}

In what follows we denote by $\mathcal L^n$ the Lebesgue measure  and, for $0\leq d\leq n$, by $\mathcal H^{d}$ the $d$-dimensional Hausdorff measure in $\R^n$. 
Let $A\subseteq\R^n$ be an open set and let $M\geq 1$, we denote by $\mathcal M_b(A;\R^M)$ the space of Radon measures with bounded total variations, and if $\mu\in \mathcal M_b(A;\R^M)$ we denote by $|\mu|(U)$ its total variation on $U\subseteq A$.
\medskip

\textbf{Functions of bounded variation:} We will recall the main properties of functions of bounded variation, and we refer to  \cite{AmFuPa:00} for more detail.
Let $A\subseteq\R^n$ be an open set and let $u\in BV(A;\R^m)$ be a map. We denote by $Du$ the distributional derivative of $u$ which splits as
\begin{align*}
Du=\nabla u+D^cu+D^ju,
\end{align*}  
where $\nabla u$ is the approximate gradient (i.e. the absolutely continuous part of $Du$ with respect to $\mathcal L^n$), $D^cu$ is the Cantor part, and $D^ju$ the jump part of $Du$. The jump set of $u$ is denoted by $S_u\subset A$ and it is a ${(n-1)}$-rectifiable set; 
if $\nu$ is a unit vector normal to $S_u$ at $x\in S_u$, then we denote 
$$u^+(x):=\text{{\rm aplim}}_{y\rightarrow x,\;(y-x)\cdot \nu>0}\;u(x),\qquad u^-(x):=\text{{\rm aplim}}_{y\rightarrow x,\;(y-x)\cdot \nu<0}\;u(x)$$
and so it turns out that 
\begin{align*}
D^ju=(u^+-u^-)\otimes \nu \cdot \mathcal H^{n-1}\res S_u.
\end{align*}
We denote by $|Du|(A)$ the total variation of $u$ in $A$, that coincides with 
\begin{align}
|Du|(A)=\sup\{\sum_{i=1}^m\int_A u_i \cdot \div\varphi_i dx: \varphi\in C^1_c(A;\R^{m\times n}),\;\|\varphi\|_{L^\infty}\leq 1\}
\end{align}
where $\varphi_i$ denotes the $i$-th row of $\varphi$.

In the one dimensional case $n=1$ the jump set $S_u$ reduces to an at most countable (possibly empty) subset  of $A$. If $t\in A$ we denote 
$$u(t^+):=\lim_{x\rightarrow t^+}u(x)\qquad u(t^-):=\lim_{x\rightarrow t^-}u(x),$$
so that $D^ju=\sum_{t\in S_u}(u(t)^+-u(t)^-)\delta_t=\sum_{t\in S_u}(u(t^+)-u(t^-))\delta_t$. In the one dimensional case there exists always a good representative of $u$ that is right-continuous, and its only discontinuity points are those in the jump set.

\begin{definition}
We say that a sequence  $u_k\subset BV(A;\R^m)$ converges to $u\in BV(A;\R^m)$ strictly in  $BV(A;\R^m)$ if 
\begin{align*}
u_k\rightarrow u \text{ in }L^1(A;\R^m),\qquad \quad |Du_k|(A)\rightarrow |Du|(A),
\end{align*}
when $k\rightarrow\infty$. 
\end{definition}
The topology induced by the strict convergence is metrizable and we denote by $d_s$ the distance associated with it: Specifically, for $u,v\in BV(A;\R^m)$ we set
\begin{align}\label{strictdistance}
d_s(u,v):=\|u-v\|_{L^1}+\big||Du|(A)-|Dv|(A)\big|.
\end{align}
With this notation $u_k\rightarrow u$ strictly in $BV(A;\R^m)$ if and only if $d_s(u_k,u)\rightarrow 0$.

We recall the following approximation result:
\begin{theorem}\label{approx_teoBV}
Let $A\subset\R^n$ be a bounded open set, and let $u\in BV(\Om;\R^m)$. Then there exists a sequence $(v_k)\subseteq C^\infty(A;\R^m)$ such that $v_k\rightarrow u$ strictly in $BV(A;\R^m)$.
\end{theorem}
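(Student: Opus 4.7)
The plan is to construct the approximating sequence by the classical Anzellotti--Giaquinta argument combining mollification with a partition of unity. The overall structure is: (i) exhaust $A$ by relatively compact subsets, (ii) mollify $u$ separately on each annular ``layer'' with a scale chosen fine enough that the $L^1$ error and the spurious contribution to the total variation are both controlled, (iii) sum up and exploit the identity $\sum_h \nabla\zeta_h \equiv 0$ coming from a partition of unity to cancel the leading-order error term.

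Concretely, fix $\eps>0$. I would pick an increasing exhaustion of $A$ by open sets $A_h\nearrow A$ with $\overline{A_h}\subset A_{h+1}$ compact, with the index shifted so that $|Du|(A\setminus A_1)<\eps$. Set $U_h:=A_{h+1}\setminus\overline{A_{h-1}}$, giving an open cover of $A$ of multiplicity at most three. Let $\{\zeta_h\}$ be a smooth partition of unity subordinate to $\{U_h\}$ and $\rho_\delta$ the standard mollifier at scale $\delta$. Define
\begin{align*}
v_\eps := \sum_h \rho_{\eps_h}*(\zeta_h u),
\end{align*}
where the scales $\eps_h>0$ are chosen small enough to guarantee: (a) $\supp(\rho_{\eps_h}*(\zeta_h u))\subset U_h$, so the sum is locally finite and $v_\eps\in C^\infty(A;\R^m)$; (b) $\|\rho_{\eps_h}*(\zeta_h u)-\zeta_h u\|_{L^1}<2^{-h}\eps$; and (c) $\|\rho_{\eps_h}*(u\otimes \nabla\zeta_h)-u\otimes\nabla\zeta_h\|_{L^1}<2^{-h}\eps$. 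All three are standard properties of mollification for $L^1$ functions.

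Property (b) immediately gives $\|v_\eps-u\|_{L^1(A)}\leq\eps$. For the total variation estimate I would differentiate,
\begin{align*}
Dv_\eps = \sum_h \rho_{\eps_h}*(\zeta_h Du) + \sum_h \rho_{\eps_h}*(u\otimes \nabla\zeta_h\,\mathcal{L}^n),
\end{align*}
and bound the two pieces separately. The first sum satisfies, by Jensen and the mass-preserving property of mollification,
\begin{align*}
\sum_h \int_A \bigl|\rho_{\eps_h}*(\zeta_h Du)\bigr|\,dx \leq \sum_h \int \zeta_h\, d|Du| = |Du|(A),
\end{align*}
using $\sum_h \zeta_h\equiv 1$. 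The second sum is the delicate one: since $\sum_h\zeta_h\equiv 1$ on $A$ we have $\sum_h u\otimes \nabla\zeta_h\equiv 0$, so by (c),
\begin{align*}
\Bigl\|\sum_h \rho_{\eps_h}*(u\otimes \nabla\zeta_h)\Bigr\|_{L^1(A)} \leq \sum_h 2^{-h}\eps \leq \eps.
\end{align*}
Hence $|Dv_\eps|(A)\leq |Du|(A)+\eps$.

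Taking $\eps_k\downarrow 0$ and $v_k:=v_{\eps_k}$, we get $v_k\to u$ in $L^1$ and $\limsup_k|Dv_k|(A)\leq |Du|(A)$. The $L^1$-lower semicontinuity of the total variation provides the opposite inequality, so $|Dv_k|(A)\to|Du|(A)$ and $v_k\to u$ strictly in $BV(A;\R^m)$. The main (indeed, essentially only) subtlety is the diagonal choice of the mollification scales $\eps_h$ in (c): although the pointwise cancellation $\sum_h u\otimes\nabla\zeta_h=0$ is exact, one cannot commute mollification with the infinite sum, so the cancellation must be realized term by term through an $L^1$-closeness argument, using that the overlap structure of the cover $\{U_h\}$ has bounded multiplicity.
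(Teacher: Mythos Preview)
Your proposal is correct and is precisely the Anzellotti--Giaquinta argument the paper has in mind: the paper does not give its own proof but refers to \cite{AmFuPa:00} and explicitly notes in Remark~\ref{rem_approximation} that the theorem ``is obtained by a local argument of mollification and then using a unity partition'', which is exactly what you carried out. The only cosmetic point is that in the first displayed estimate for the total variation one should perhaps say a word about why mollifying the measure $\zeta_h\,Du$ does not increase its mass (Jensen plus Fubini, using that $\mathrm{supp}\,\zeta_h + B_{\eps_h}\subset U_h$), but you clearly have this in hand.
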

Inspecting the proof of the Theorem above (see, e.g.,  \cite{AmFuPa:00}), the following remark is in order:
\begin{remark}\label{rem_approximation}
The previous Theorem is obtained by a local argument of mollification and then using a unity partition. In particular, if $u$ is Lipschitz continuous in $A$, then
$$v_k\rightarrow u\qquad \qquad \text{weakly* in }W^{1,\infty}(A;\R^{m})\text{ and strongly in }W^{1,p}(A;\R^m),$$  
for all $p<\infty$, 
and the functions $v_k$ are Lipschitz continuous with Lipschitz constant less than or equal to the one of $u$. 
\end{remark}
\medskip

\textbf{Currents:} 
For an open set $A\subset \R^n$ we denote by $\mathcal D^k(A)$ the 
space of (compactly 
supported in $A$) smooth $k$-forms and by $\mathcal D_k(A)$ the space of $k$-dimensional currents, where $0\leq k\leq n$. Given $T\in \mathcal D_k(\R^n)$ we denote by $\vert T\vert_{\R^n}$  the mass of $T$, and by $\vert T\vert_A$ 
its mass in an open set $A\subset \R^n$.
%
Given $T\in \mathcal D_k(A)$ with  $k\geq 1$, its 
 boundary $\partial T\in \mathcal D_{k-1}(A)$ 
is defined by 
$$\partial T(\omega):=T(d \omega)\qquad  \forall\omega\in \mathcal D^{k-1}(A),$$
where $d\omega$ denotes the external differential of $\omega$. 
In the case $k=0$ by convention it is $\partial T=0$.
Whenever $F:A\rightarrow B$ is a Lipschitz 
map between open sets, and $T\in \mathcal D_k(A)$, the symbol $F_\sharp T\in \mathcal D_k(B)$ denotes the push-forward of $T$ by $F$.
%

We say that a current $T\in \mathcal D_k(A)$ is rectifiable if there exist a $\mathcal H^k$-rectifiable set\footnote{$S$ is said $\mathcal H^k$-rectifiable if there are (at most) countably many Lipschitz maps $\phi_h:\R^k\rightarrow \R^n$ such that 
	$$S\subseteq N\cup\bigcup_{h=0}^{+\infty} \phi_h(\R^k),\qquad \mathcal H^k(N)=0.$$
	} $S$, a simple unit $k$-vector $\tau(x)$ 
for $\mathcal H^k$-a.e.
$x\in S$, and a measurable function $\theta:S\rightarrow \R$ with
\begin{equation*}\label{eq:S_rectif}
	T(\omega)=\int_S\theta(x)\langle\omega(x),\tau(x)\rangle~ 
	d\mathcal H^{k}(x),\qquad \omega\in \mathcal D^k(A).
\end{equation*}
A rectifiable current  $T\in \mathcal D_k(A)$ is said integral if $\theta$ takes integer values, $\tau$ is tangent to $S$, and   $|T|_A<+\infty$, $|\partial T|_A<+\infty$.
In the special case in which $S=E$ is a finite subset of $\R^n$,  we denote by $\jump{E}$ the standard integration over $E$ defined as the rectifiable $n$-current with $\theta=1$ and $\tau=e_1\wedge\dots\wedge e_n$ is the standard orientation of $\R^n$. Precisely
$$	\jump{E}(\omega)=\int_E\langle\omega(x),e_1\wedge\dots\wedge e_n\rangle~ 
dx,\qquad \omega\in \mathcal D^n(\R^n). $$  
If $E$ is a finite perimeter set with finite Lebesgue measure, then $\jump{E}$ turns out to be an integral current.
\medskip

\textbf{Graphs and Cartesian maps:}
Let $m\geq 2$ be a fixed integer; multi-indeces $\alpha\subseteq\{1,\dots,n\}$ and $\beta\subseteq\{1,\dots,m\}$ are two ordered sets, possibly empty. We denote by $|\cdot|$ the cardinality; by $\overline\alpha$ we denote the complementary of $\alpha$, i.e. $\overline \alpha:=\{1,\dots,n\}\setminus \alpha$, and similarly $\overline\beta:=\{1,\dots,m\}\setminus \beta$. Given a $m\times n$ matrix $A=(a_{ij})$, $i\in \{1,\dots,m\}$, $j\in \{1,\dots,n\}$, and given $\alpha,\beta$ multi-indeces as above such that $|\alpha|+|\beta|=n$, we denote by $$M_{\overline \alpha}^\beta(A),$$
the determinant of the submatrix of $A$ whose columns are indexed in $\overline \alpha$ and lines in $\beta$,  multiplied by $\theta(\alpha)$, the sign of the permutation $(\alpha,\overline\alpha)\in S(n)$ (with the convention that $M_\varnothing^\varnothing(A)=1$). In the specific case of our interest, if $n=2$ and  $A=\nabla u$, with $u:\R^2\rightarrow \R^m$ a sufficiently smooth map, it holds
\begin{align*}
M_\varnothing^\varnothing(A)=1\qquad \qquad 	M_{j}^{i}(\nabla u)=(-1)^j\frac{\partial u_i}{\partial x_j}\qquad \qquad M_{12}^{i_1i_2}(\nabla u)=\frac{\partial u_{i_1}}{\partial x_1}\frac{\partial u_{i_2}}{\partial x_2}-\frac{\partial u_{i_2}}{\partial x_1}\frac{\partial u_{i_1}}{\partial x_2}.
\end{align*}
We denote by $\{e_1,\dots, e_n\}$ the canonical basis of $1$-vectors of $\R^n$, and by $\{\varepsilon_1,\dots,\varepsilon_m\}$ that of the target space $\R^m$. The dual basis of $1$-covectors are denoted by $\{dx_1,\dots,dx_n\}$ and $\{dy_1,\dots,dy_m\}$, respectively. If $\alpha\subseteq \{1,\dots,n\}$ and $\beta\subseteq\{1,\dots,m\}$ are ordered sets as above, we denote $e_\alpha$ and $\varepsilon_\beta$ the $k$-vector and $h$-vector defined as 
\begin{align}
e_\alpha:=e_{\alpha_1}\wedge\dots\wedge e_{\alpha_k}\qquad \text{ if }\alpha=\{\alpha_1,\dots,\alpha_k\},\\
\varepsilon_\beta:=\varepsilon_{\beta_1}\wedge\dots\wedge \varepsilon_{\beta_h}\qquad \text{ if }\beta=\{\beta_1,\dots,\beta_h\},
\end{align}
where $k=|\alpha|$, $h=|\beta|$, so in the case $n=2$ it holds
\begin{align}
e_\varnothing=1,\qquad e_\alpha=e_j\;\;\text{ if }\alpha=\{j\},\qquad e_{12}=e_1\wedge e_2.
\end{align}
Next we introduce the $n$-vector associated to a $C^1$ map  $u:\R^n\rightarrow \R^m$
$$\mathcal M(\nabla u):=\sum_{|\alpha|+|\beta|=n}M_{\overline \alpha}^\beta(\nabla u)e_\alpha\wedge \varepsilon_\beta,$$
where the sum takes place over all multi-indeces $\alpha\subseteq\{1,\dots,n\}$ and $\beta\subseteq\{1,\dots,m\}$ with $|\alpha|+|\beta|=n$.

Given a map  $u\in C^1(A;\R^m)$ we introduce its graph $G_u\subseteq A\times \R^m$ as
$$G_u=\{(x,y)\in A\times \R^m:y=u(x)\}$$ and we use the map $\text{{\rm Id}}\times u:A\rightarrow A\times \R^m$, $(\text{{\rm Id}}\times u)(x):=(x,u(x))$, to parametrize it.
$G_u$ is
 identified in a natural way with an integral current given by integration over it. More precisely, denoting this current by $\jump{G_u}$, its standard orientation  is given by $\mathcal M(\nabla u)/|\mathcal M(\nabla u)|$, the multiplicity $\theta$ is always $1$, and so for all $n$-form $\omega\in \mathcal D^n(A\times \R^m)$ it holds
$$\jump{G_u}(\omega)=(\text{{\rm Id}}\times u)_\sharp\jump{A}=\int_A\langle\omega(x,u(x)),\mathcal M(\nabla u(x))\rangle\, dx.$$
It is seen that  $\jump{G_u}$ hass mass that coincides with the  $\mathcal H^n$-measure of $G_u$, and is given by 
$$	|\jump{G_u}|=\area(u,A)=\int_A|\mathcal M(\nabla u)|~dx.$$
It turns out, thanks to the regularity of $u$, that $\jump{G_u}$ is boundaryless.

We now want to extend the definitions above for maps $u\in BV(A,\R^m)$. To this aim we denote by $R_u\subseteq A$ the set of regular points of $u$, namely the  points $x$ that are Lebesgue points for $u$ and $\nabla u$, moreover $u(x)$ 
coincides with its Lebesgue value and $u$ 
is approximately differentiable at $x$. We denote 
\begin{align*}
	G_u^R:=\{(x,y)\in R_u\times \R^2:y=u(x)\}.
\end{align*}
Also $G_u^R$ is $\mathcal H^n$-rectifiable
and we define $$\mathcal G_u:=\jump{G_u^R}=(\text{{\rm Id}}\times u)_\sharp\jump{R_u}.$$ It holds that  
$$	|\mathcal G_u|=\int_A|\mathcal M(\nabla u)|~dx,$$
where $\nabla u$ is the approximate gradient of $u$. In general $\mathcal G_u$ has non-trivial boundary. In the special case that $\partial\mathcal G_u=0$ in $\mathcal D_{n-1}(A\times \R^m)$ we say that $u$ is a Cartesian map. 

\subsection{Relaxation and approximation}
In this section we are concerned with the relaxation of the functional
\begin{align}\label{functional_F}
	F(u,\Om):=\int_\Om g(\mathcal M(\nabla u))dx,
\end{align}
where $g$ is a convex function satisfying \eqref{growth1_intro}.
Standard relaxation in the space $BV(\Om;\R^m)$ with respect to the strict convergence is given by \eqref{rel_strict}, where the functions $u_k$ are obviously taken in $C^1(\Om;\R^m)\cap BV(\Om;\R^m)$, since we approximate $u$ in the strict topology.

We now observe that the constraint in \eqref{rel_strict} of taking approximating functions $u_k\in C^1(\Om;\R^m)$ can be weakened. To this purpose, for simplicity we restrict to the case of interest of this paper, namely $\Om\subset\R^2$, even if the same discussion can be done for the case $n\geq3$.
We introduce the alternative relaxation, that is, for all $u\in BV(\Om;\R^m)$,
\begin{align}
	\mathcal F^*(u,\Om):=\inf\{\liminf_{k\rightarrow\infty}F(u_k,\Om):(u_k)\subset \text{{\rm Lip}}_{\text{{\rm loc}}}(\Om;\R^m),\;u_k\rightarrow u\text{ strictly in }BV(\Om;\R^m)\}.
\end{align}
Let $u\in \text{{\rm Lip}}_{\text{{\rm loc}}}(\Om;\R^m)\cap BV(\Om;\R^m)$: By Remark \ref{rem_approximation}, there exists a sequence $(v_k)\subset C^1(\Om;\R^m)\cap BV(\Om;\R^m)$ such that $v_k\rightarrow u$ strictly in $BV(\Om;\R^m)$ and
\begin{align*}
&\nabla v_k\rightarrow \nabla u\qquad \qquad \text{ strongly in } L^1(\Om;\R^{m\times 2}),\\
&M_{12}^{ij}(\nabla v_k)\rightarrow M_{12}^{ij}(\nabla u)\qquad \text{ strongly in }L^1(\Om),
\end{align*}
 for all $i,j\in \{1,\dots,m\}$. Up to a subsequence these convergences take place also poitwise a.e., and by \eqref{growth1_intro} we can apply Lebesgue dominated convergence theorem to conclude
 \begin{align}
F(v_k,\Om)\rightarrow F(u,\Om).
 \end{align}
 As a consequence, if $(u_j)\subset \text{{\rm Lip}}_{\text{{\rm loc}}}(\Om;\R^m)\cap  BV(\Om;\R^m)$ is a recovery sequence for $\mathcal F^*(u,\Om)$, by a diagonal argument we can find a sequence $(v_k)\subset C^1(\Om;\R^m)\cap BV(\Om;\R^m)$ such that 
$
  F(v_k,\Om)\rightarrow \mathcal F(u,\Om).
 $ We conclude that $\mathcal F^*(u,\Om)\geq \mathcal F(u,\Om)$.
 
 Viceversa,  it is immediate that $\mathcal F^*(u,\Om)\leq \mathcal F(u,\Om)$ (since every $C^1$ function is also locally Lipschitz).  Whence $\mathcal F^*=\mathcal F$.
 Thanks to this observation, we can often consider locally Lipschitz recovery sequence instead of maps of class $C^1$. 
 
%
%
%
%
%

\subsection{Setting of the problem}\label{sec_setting}
In what follows $\Om\subset\R^2$ will be our reference domain, an open bounded set.
Let $N:=1+2m+m(m+1)/2$ and let $g:\R^{N}\rightarrow [0,+\infty)$ be convex; our functional is given by \eqref{functional_F}
whenever $u\in C^1(\Om;\R^m)\cap BV(\Om;\R^m)$. To extend it on $BV(\Om;\R^m)$, we proceed by relaxation and consider the functional $\mathcal F(u,\Om)$ given in \eqref{rel_strict}. This turns out to be lower-semicontinuous with respect to the strict convergence in $BV(\Om;\R^m)$.
%
To our purposes, we will assume that there is a constant $C_g>0$  such that for all $A\in \R^N$,
\begin{align}\label{growth1}
|g(A)|\leq C_g(|A|+1).
\end{align}
Furthermore, we assume not degeneracy of the functional through the folllowing condition
\begin{align}\label{growth2}
|g(A)|\geq c_g\sum_{\substack{i,j=1\\i\neq j}}^{m}|M_{12}^{ij}(A)|,
\end{align}
for a general positive constant $c_g$. In the case that $m=2$ the above condition is equivalent to \eqref{growth2_intro}.
As a consequence of the growth condition \eqref{growth1} and of the convexity of $g$, the subdifferential $\partial g$ satisfies
\begin{align}\label{growthbis}
\|\partial g\|_{L^\infty}\leq C_g.
\end{align}

\section{Tools and preliminary results}\label{sec_preliminaryresults}

\subsection{Properties of measures}
In order to prove our main result Theorem \ref{teo_main1} we will employ the classical theorem named after De Giorgi and Letta, which we collect here in a form specialized for our setting (see \cite[Theorem 1.53]{AmFuPa:00} for the general formulation and its proof).
We denote by $\mathcal U(\Om)$ the family of open subsets of $\Om$.
\begin{theorem}[De Giorgi-Letta]\label{DGL_teo}
	Let $\Om\subset\R^2$ be an open set and assume that $\mu:\mathcal U(\Om)\rightarrow [0,+\infty]$ is a function so that $\mu(\varnothing)=0$. If
	\begin{itemize}
\item[(i)] $\mu$ is non-decreasing, i.e., $\mu(B)\leq \mu(A)$ for all $A,B\in \mathcal U(\Om)$, $B\subseteq A$;
\item[(ii)] $\mu$ is additive, i.e., $\mu(A\cup B)=\mu(A)+\mu(B)$ for all $A,B\in \mathcal U(\Om)$, $A\cap B=\varnothing$;
\item[(iii)] $\mu$ is sub-additive, i.e., $\mu(A)\leq \mu(B_1)+\mu(B_2)$ for all $A,B_1,B_2\in \mathcal U(\Om)$, $A\subseteq  B_1\cup B_2$;
\item[(iv)] $\mu$ is inner regular, i.e., for all $A\in \mathcal U(\Om)$ it holds
$$\mu(A)=\sup\{\mu(B):B\in \mathcal U(\Om),\;B\subset\subset A\};$$
	\end{itemize}
Then $\mu$ is the restriction to $\mathcal U(\Om)$ of a Borel measure $\overline \mu:\mathcal B(\Om)\rightarrow [0,+\infty]$.
\end{theorem}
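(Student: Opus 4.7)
The plan is to build the Borel measure $\overline\mu$ via the Carath\'eodory outer-measure construction, using the four axioms of $\mu$ in a very specific order: monotonicity and additivity yield the basic arithmetic properties on open sets, subadditivity upgraded by inner regularity gives countable subadditivity on open sets (hence an outer measure), and inner regularity plus additivity on disjoint open sets yields Carath\'eodory measurability of open sets.

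Concretely, I would first define, for every $E\subseteq\Om$,
\begin{equation*}
\mu^*(E):=\inf\{\mu(A):A\in\mathcal U(\Om),\ E\subseteq A\}.
\end{equation*}
Obviously $\mu^*(\varnothing)=0$ and $\mu^*$ is monotone. By (i), $\mu^*$ agrees with $\mu$ on $\mathcal U(\Om)$: one inequality is trivial by taking $A=E$, the other follows from monotonicity of $\mu$. The main preliminary step is to strengthen the finite subadditivity (iii) to \emph{countable} subadditivity on open sets. Given $A=\bigcup_n A_n$ with each $A_n\in\mathcal U(\Om)$, I would pick $B\in\mathcal U(\Om)$ with $B\subset\subset A$; by compactness of $\overline B$, finitely many $A_{n_1},\ldots,A_{n_N}$ cover $\overline B$, and iterating (iii) together with (i) gives $\mu(B)\le\sum_{j=1}^{N}\mu(A_{n_j})\le\sum_n\mu(A_n)$. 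Taking the supremum over $B$ and invoking (iv) yields $\mu(A)\le\sum_n\mu(A_n)$. Countable subadditivity of $\mu^*$ on arbitrary sets then follows by the usual $\varepsilon/2^n$ argument: cover each $E_n$ by an open $A_n$ with $\mu(A_n)\le \mu^*(E_n)+\varepsilon/2^n$, whence $\bigcup_n E_n\subseteq\bigcup_n A_n\in\mathcal U(\Om)$ and $\mu^*(\bigcup_n E_n)\le\mu(\bigcup_n A_n)\le\sum_n\mu^*(E_n)+\varepsilon$. So $\mu^*$ is an outer measure.

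The heart of the argument is to show that every open set is Carath\'eodory $\mu^*$-measurable, i.e., for every $A\in\mathcal U(\Om)$ and every $E\subseteq\Om$,
\begin{equation*}
\mu^*(E)\ge\mu^*(E\cap A)+\mu^*(E\setminus A).
\end{equation*}
Fix $\varepsilon>0$ and pick $U\in\mathcal U(\Om)$ with $E\subseteq U$ and $\mu(U)\le\mu^*(E)+\varepsilon$. Apply (iv) to the open set $U\cap A$ to find $V\in\mathcal U(\Om)$ with $V\subset\subset U\cap A$ and $\mu(U\cap A)\le\mu(V)+\varepsilon$. Since $\overline V\subseteq A$, the set $W:=U\setminus\overline V$ is open, contains $E\setminus A$, and is disjoint from $V$. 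Monotonicity gives $\mu^*(E\cap A)\le\mu(U\cap A)\le\mu(V)+\varepsilon$ and $\mu^*(E\setminus A)\le\mu(W)$; disjoint additivity (ii) applied to $V\sqcup W\subseteq U$ combined with (i) yields $\mu(V)+\mu(W)=\mu(V\cup W)\le\mu(U)$. Chaining these inequalities produces $\mu^*(E\cap A)+\mu^*(E\setminus A)\le\mu(U)+\varepsilon\le\mu^*(E)+2\varepsilon$, and sending $\varepsilon\to 0$ closes the measurability inequality (the reverse is automatic from subadditivity of $\mu^*$).

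Once every open set is $\mu^*$-measurable, Carath\'eodory's theorem supplies a $\sigma$-algebra $\mathcal M\supseteq\mathcal U(\Om)$ on which $\mu^*$ restricts to a (complete) measure; since $\mathcal M$ contains all open sets, it contains $\mathcal B(\Om)$, and defining $\overline\mu:=\mu^*\!\upharpoonright_{\mathcal B(\Om)}$ gives the required Borel measure. Because $\mu^*\!\upharpoonright_{\mathcal U(\Om)}=\mu$ was established at the outset, $\overline\mu$ agrees with $\mu$ on $\mathcal U(\Om)$, concluding the proof. The single delicate point on which the whole argument pivots is the separation step $V\subset\subset U\cap A\Rightarrow\overline V\subseteq A$, which is precisely where the topological assumption (iv) is used to replace the non-open set $U\setminus A$ by the genuinely open $U\setminus\overline V$; the rest of the proof is a routine but careful bookkeeping of (i)--(iii).
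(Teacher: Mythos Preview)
The paper does not give its own proof of this statement: it simply records the De Giorgi--Letta criterion and refers the reader to \cite[Theorem~1.53]{AmFuPa:00} for the proof. Your argument is correct and is, in fact, essentially the classical proof appearing in that reference: define the outer measure $\mu^*$ by infimizing $\mu$ over open covers, use (iv) plus compactness to upgrade (iii) to countable subadditivity on open sets, and then exploit the separation $V\subset\subset U\cap A$ together with (ii) and (i) to verify the Carath\'eodory measurability of every open set. There is nothing to compare here beyond noting that your write-up matches the standard route the paper is implicitly invoking.
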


We will often use the following result due to Reshetnyak (see \cite{AmFuPa:00}[Theorem 2.39]):
\begin{theorem}\label{teo_32}
Let $M\geq1$ and let $\mu,\mu_k$ be Radon measures in $A\subseteq \R^n$ taking values in  $\R^M$. Suppose that $\mu_k\rightharpoonup \mu$ weakly star as measures and that $|\mu_k|(A)\rightarrow |\mu|(A)$. Then 
\begin{align*}
\int_Af\left(x,\frac{\mu_k}{|\mu_k|}(x)\right)d|\mu_k|(x)\rightarrow \int_Af\left(x,\frac{\mu}{|\mu|}(x)\right)d|\mu|(x)
\end{align*}
as $k\rightarrow \infty$ for all continuous and bounded functions $f:A\times S^{M-1}\rightarrow \R$.
\end{theorem}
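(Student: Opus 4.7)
\textbf{Proof proposal for Theorem \ref{teo_32} (Reshetnyak's continuity theorem).}

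The plan is to reduce the statement to the classical Reshetnyak lower semicontinuity theorem applied twice (once to $f$ and once to $-f$), exploiting the mass equality $|\mu_k|(A)\to|\mu|(A)$ to turn the two $\liminf$ bounds into a genuine limit. The strategy has three main movements: a passage from vector to scalar weak-$*$ convergence of the total variations, a lifting of the measures to $A\times S^{M-1}$, and a density argument that reduces the integrand to the convex $1$-homogeneous case.

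First I would show that $|\mu_k|\rightharpoonup|\mu|$ weakly-$*$ as scalar Radon measures. The lower semicontinuity of total variation under weak-$*$ convergence gives $|\mu|(U)\le\liminf_k|\mu_k|(U)$ for every open $U\subseteq A$, and the same bound for the open complement $A\setminus\overline{U}$; the assumption $|\mu_k|(A)\to|\mu|(A)$ then forces equality up to boundary effects, so that $|\mu_k|(U)\to|\mu|(U)$ on every open $U$ with $|\mu|(\partial U)=0$. Moreover the total masses converge, so the convergence is tight: $\int\varphi\,d|\mu_k|\to\int\varphi\,d|\mu|$ for every $\varphi\in C_b(A)$.

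Next I would lift things to the sphere bundle. Defining $\sigma_k:=(\mathrm{Id}\times g_k)_\sharp |\mu_k|$ with $g_k:=d\mu_k/d|\mu_k|:A\to S^{M-1}$ (and analogously $\sigma$), the claim is exactly $\int f\,d\sigma_k\to\int f\,d\sigma$ for every bounded continuous $f:A\times S^{M-1}\to\mathbb{R}$. Since $\sigma_k(A\times S^{M-1})=|\mu_k|(A)\to|\mu|(A)=\sigma(A\times S^{M-1})$, it suffices to establish weak-$*$ convergence $\sigma_k\rightharpoonup\sigma$, the mass equality promoting it to tight convergence on $C_b$.

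For the weak-$*$ convergence I would test against a dense subfamily of $C_0(A\times S^{M-1})$. By Stone-Weierstrass, the products $\varphi(x)\psi(v)$ with $\varphi\in C_c(A)$ and $\psi\in C(S^{M-1})$ are dense, and on $S^{M-1}$ every continuous $\psi$ can be written as a uniform limit of differences $\psi_1-\psi_2$ of restrictions of convex positively $1$-homogeneous functions on $\mathbb{R}^M$ (the cone of support functions separates points and is closed under max, so by Stone-Weierstrass the linear span is dense in $C(S^{M-1})$). Writing $\psi_i$ in $1$-homogeneous form, the integral
\[
\int_A \varphi(x)\psi_i(g_k(x))\,d|\mu_k|(x)=\int_A \varphi(x)\,\psi_i\!\left(\tfrac{d\mu_k}{d|\mu_k|}\right)d|\mu_k|
\]
is precisely of the form covered by the classical Reshetnyak lower semicontinuity theorem for convex $1$-homogeneous integrands, provided $\varphi\ge 0$; splitting $\varphi=\varphi^+-\varphi^-$ one controls both $\liminf$s. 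Applied to $\psi_i$ and $-\psi_i=(\text{difference of two other support functions})$, and combined with the mass equality, each $\liminf$ is forced to be the correct limit, yielding convergence for the dense family. A standard $3\varepsilon$-argument with the uniform bound $|\sigma_k|\le|\mu_k|(A)\,\|f\|_\infty$ extends convergence to all $f\in C_b(A\times S^{M-1})$.

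The main obstacle I foresee is the density step: one must verify carefully that arbitrary continuous functions on $S^{M-1}$ are approximable by differences of $1$-homogeneous convex functions on $\mathbb{R}^M$, and handle the sign of $\varphi(x)$ in the $x$-variable so that at each stage one legitimately applies Reshetnyak's lower semicontinuity (which requires a nonnegative convex integrand). Everything else is bookkeeping: the tightness from mass equality and the sandwich $\liminf\le\limsup$ obtained by applying the semicontinuity inequality to $\pm f$ deliver the continuity statement.
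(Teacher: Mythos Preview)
The paper does not give its own proof of this statement: Theorem~\ref{teo_32} is quoted as the classical Reshetnyak continuity theorem, with a reference to \cite[Theorem~2.39]{AmFuPa:00}, and is used as a black box. So there is no ``paper's proof'' to compare your proposal against.

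That said, your outline is essentially the standard textbook argument (and in particular matches the proof in \cite{AmFuPa:00} that the paper cites): first upgrade $\mu_k\rightharpoonup\mu$ with $|\mu_k|(A)\to|\mu|(A)$ to weak-$*$ convergence of the total variations $|\mu_k|\rightharpoonup|\mu|$; then push forward to measures $\sigma_k$ on $A\times S^{M-1}$ and reduce the claim to $\sigma_k\rightharpoonup\sigma$; finally test against products $\varphi(x)\psi(v)$ and use Reshetnyak's \emph{lower} semicontinuity theorem on convex $1$-homogeneous integrands, together with the mass equality, to squeeze $\liminf$ and $\limsup$. This is correct.

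One point where your justification is a bit loose is the density step. Your appeal to Stone--Weierstrass (``the cone of support functions is closed under max, so the linear span is dense'') is not quite a proof as written: the linear span of a max-stable cone is not automatically a sublattice or a subalgebra. The cleanest way to fill this is to note that $C^2$ functions are dense in $C(S^{M-1})$, and for any $\psi\in C^2(S^{M-1})$ the $1$-homogeneous extension $\tilde\psi(v):=|v|\,\psi(v/|v|)$ satisfies $\nabla^2(\tilde\psi+C|\cdot|)\succeq 0$ for $C$ large (since $\nabla^2|v|$ restricted to $v^\perp$ is $|v|^{-1}I_{v^\perp}$ and the tangential Hessian of $\tilde\psi$ is bounded on the sphere), so $\psi=(\psi+C)-C$ is a difference of restrictions of convex $1$-homogeneous functions. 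With this in hand, and splitting $\varphi=\varphi^+-\varphi^-$ as you indicate, the sandwich argument goes through.
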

We will also need the following property valid for strictly converging Borel measures $\mu_k,\mu$.
\begin{lemma}
Suppose that $\mu_k\rightharpoonup \mu$ weakly star as measures and $|\mu_k|(A)\rightarrow |\mu|(A)$, and let $B\subset A$ be open. Then if $|\mu|( A\cap \partial B)=|\mu_k|( A\cap \partial B)=0$ for all $k$, it holds
$$|\mu_k|(B)\rightarrow |\mu|(B).$$
\end{lemma}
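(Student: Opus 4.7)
The plan is to first upgrade the strict convergence of the $\R^M$-valued measures $\mu_k$ to strict convergence of their scalar total variations $|\mu_k|$, and then apply a standard portmanteau-type argument on the open set $B$ using the boundary-null hypothesis.

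For the first step, I would show that $|\mu_k|\rightharpoonup^* |\mu|$ as Radon measures on $A$. Fix an arbitrary test function $\varphi\in C_c(A)$ and apply Theorem \ref{teo_32} to the continuous bounded function $f(x,v):=\varphi(x)$ on $A\times S^{M-1}$. Since $f$ does not depend on $v$, the theorem yields
\begin{equation*}
\int_A \varphi\, d|\mu_k| \;=\; \int_A f\!\left(x,\tfrac{\mu_k}{|\mu_k|}(x)\right) d|\mu_k|(x) \;\longrightarrow\; \int_A f\!\left(x,\tfrac{\mu}{|\mu|}(x)\right) d|\mu|(x) \;=\; \int_A \varphi\, d|\mu|,
\end{equation*}
so $|\mu_k|\rightharpoonup^*|\mu|$ weakly* on $A$; combined with the hypothesis $|\mu_k|(A)\to|\mu|(A)$, this is strict convergence of $|\mu_k|$ to $|\mu|$ on $A$.

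For the second step, I would deduce $|\mu_k|(B)\to|\mu|(B)$ by a two-sided bound. Both $B$ and $A\setminus\overline B$ are open in $A$, so lower semicontinuity of mass on open sets under weak-* convergence gives
\begin{equation*}
|\mu|(B)\leq \liminf_k |\mu_k|(B),\qquad |\mu|(A\setminus\overline B)\leq \liminf_k|\mu_k|(A\setminus\overline B).
\end{equation*}
Subtracting the second inequality from the total mass convergence $|\mu_k|(A)\to|\mu|(A)$ yields
\begin{equation*}
\limsup_k|\mu_k|(A\cap\overline B)\leq |\mu|(A\cap\overline B).
\end{equation*}
The hypotheses $|\mu|(A\cap\partial B)=|\mu_k|(A\cap\partial B)=0$ then give $|\mu|(A\cap\overline B)=|\mu|(B)$ and $|\mu_k|(A\cap\overline B)=|\mu_k|(B)$ for every $k$, and chaining with the previous inequalities produces
\begin{equation*}
|\mu|(B)\;\leq\;\liminf_k|\mu_k|(B)\;\leq\;\limsup_k|\mu_k|(B)\;=\;\limsup_k|\mu_k|(A\cap\overline B)\;\leq\;|\mu|(B),
\end{equation*}
which is the claim.

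The only nontrivial step I expect is the first one, namely passing from strict convergence of the vector-valued measures to weak-* convergence of their scalar total variations; this is exactly where Theorem \ref{teo_32} is indispensable. Once that is in hand, the remainder is the classical fact that strict convergence (weak-* plus total mass) of nonnegative Radon measures implies pointwise convergence on Borel sets of null boundary measure, and no tightness or locality issue intervenes because both $B$ and $A\setminus\overline B$ sit inside $A$, on which the total mass is controlled uniformly.
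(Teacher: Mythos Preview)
Your argument is correct, and your Step 2 is essentially the paper's proof, just packaged slightly differently: the paper writes the single chain
\[
|\mu|(B)+|\mu|(A\setminus\overline B)\le \liminf_k|\mu_k|(B)+\liminf_k|\mu_k|(A\setminus\overline B)\le \liminf_k|\mu_k|(A)=|\mu|(A),
\]
deduces $|\mu|(B)=\liminf_k|\mu_k|(B)$, and then upgrades to a full limit by a subsequence remark, whereas you extract the $\limsup$ bound on $A\cap\overline B$ explicitly. These are the same idea.

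The one point worth noting is that your Step 1 is unnecessary, and your claim that Theorem \ref{teo_32} is ``indispensable'' is not borne out: the paper does not invoke Reshetnyak here at all. Lower semicontinuity of the total variation on open sets, $|\mu|(U)\le\liminf_k|\mu_k|(U)$, follows directly from weak-* convergence of the vector measures $\mu_k$ (by the supremum definition of $|\cdot|(U)$), without first establishing $|\mu_k|\rightharpoonup^*|\mu|$. So the proof is more elementary than you anticipated.
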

\begin{proof}
By lower semicontinuity of the total variation on open sets and thanks to the hypothesis $\mu( A\cap \partial B)=0 $ we have
\begin{align*}
	|\mu|(A)&=|\mu|(B)+|\mu|(A\setminus \overline B)\leq \liminf_{k\rightarrow \infty}|\mu_k|(B)+\liminf_{k\rightarrow \infty}|\mu_k|(A\setminus \overline B)\\
&\leq \liminf_{k\rightarrow \infty}|\mu_k|(A)=\lim_{k\rightarrow \infty}|\mu_k|(A)=|\mu|(A),
\end{align*}
so all the inequalities are equalities and in particular $|\mu|(B)=\liminf_{k\rightarrow \infty}|\mu_k|(B)$. Since the same holds for every subsequence of $\mu_k$, we easily infer that the liminf is indeed a limit.
\end{proof}

We also collect the following result which can be found in \cite[Proposition 1, Section 1.3.4]{GMS3}.

\begin{prop}\label{prop34}
	Let $A$ be open and bounded and let $h$ be a positive integer.
Let $V_k,V\in L^1(A;\R^h)$ be such that $V_k\rightharpoonup V$ weakly star in $L^1(A;\R^h)$ and moreover
\begin{align*}
\int_A\sqrt{1+|V_k|^2}dx\rightarrow \int_A\sqrt{1+|V|^2}dx
\end{align*}
as $k\rightarrow +\infty$. Then $V_k\rightarrow V$ strongly in $L^1(A;\R^h). $
\end{prop}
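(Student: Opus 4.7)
The plan is to reduce the statement to the Reshetnyak-type continuity result in Theorem~\ref{teo_32}. First, I would introduce the auxiliary vector Radon measures
$$\mu_k := (1, V_k)\,\mathcal{L}^n \res A, \qquad \mu := (1, V)\,\mathcal{L}^n \res A,$$
which are bounded elements of $\mathcal{M}_b(A; \R^{1+h})$ since $|\mu_k|(A) = \int_A \sqrt{1+|V_k|^2}\,dx$ is uniformly bounded by hypothesis (and likewise for $\mu$). The weak-star convergence $V_k \rightharpoonup V$ in $L^1$ yields in particular $\int V_k \varphi\,dx \to \int V \varphi\,dx$ for every $\varphi \in C_c(A)$, which together with the trivial convergence of the first component gives $\mu_k \rightharpoonup \mu$ weakly star as measures. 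The hypothesis on the area integral is precisely $|\mu_k|(A) \to |\mu|(A)$, so the setting of Theorem~\ref{teo_32} applies.

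Second, I would choose test functions designed to detect $L^1$-convergence of $V_k$. Given any $c \in C_c(A; \R^h)$, define
$$f_c: A \times S^{h} \to \R, \qquad f_c\bigl(x, (\sigma_0, \sigma')\bigr) := |\sigma' - c(x)\sigma_0|,$$
which is continuous and bounded (by $1 + \|c\|_\infty$). Noting that the polar decomposition gives $\mu_k/|\mu_k|(x) = (1, V_k(x))/\sqrt{1+|V_k(x)|^2}$, a direct computation shows
$$\int_A f_c\!\left(x, \frac{\mu_k}{|\mu_k|}(x)\right) d|\mu_k|(x) \;=\; \int_A |V_k(x) - c(x)|\,dx,$$
and the analogous identity holds for $\mu$. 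Theorem~\ref{teo_32} therefore yields
$$\int_A |V_k - c|\,dx \;\longrightarrow\; \int_A |V - c|\,dx \qquad \text{as } k \to \infty.$$

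Finally, I would close the argument by a standard density approximation. Given $\varepsilon > 0$, pick $c_\varepsilon \in C_c(A; \R^h)$ with $\|V - c_\varepsilon\|_{L^1(A)} < \varepsilon$; then the triangle inequality gives
$$\int_A |V_k - V|\,dx \;\leq\; \int_A |V_k - c_\varepsilon|\,dx + \int_A |c_\varepsilon - V|\,dx,$$
whence $\limsup_k \|V_k - V\|_{L^1(A)} \leq 2\varepsilon$, and $\varepsilon \to 0$ concludes.

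The only subtlety is that Theorem~\ref{teo_32} requires the integrand to be bounded on $A \times S^{h}$, so one cannot test directly with $c = V$ (which is merely $L^1$); the continuous compactly supported approximation of $V$ is precisely what bypasses this obstacle.
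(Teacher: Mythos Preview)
Your proof is correct and self-contained. The paper does not give its own proof of this proposition but simply cites it from \cite[Proposition~1, Section~1.3.4]{GMS3}, so there is no argument to compare against directly. Your reduction to the Reshetnyak continuity theorem (Theorem~\ref{teo_32}) via the augmented measures $\mu_k=(1,V_k)\mathcal L^n\res A$ is clean and standard; the key computation $\int_A f_c\,d|\mu_k|=\int_A|V_k-c|\,dx$ is correct, and the density step with $c_\varepsilon\in C_c(A;\R^h)$ legitimately circumvents the boundedness requirement on the integrand.
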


\subsection{Lipschitz and BV curves}
Given a Lipschitz map $\varphi:[a,b]\rightarrow \R^m$, we denote by $L_\varphi:=\int_a^b|\dot\gamma|d\tau$ its total variation and we introduce the quantity 
\begin{align}\label{s_varphi}
s_\varphi(t)=\frac{1}{L_\varphi+(b-a)}\int_a^t(|\dot \varphi|+1)d\tau, \qquad \qquad \forall t\in [a,b].
\end{align}
This is a strictly increasing and continuous function, so we let $t_\varphi:[0,1]\rightarrow[a,b]$ be its inverse $t_\varphi=s_\varphi^{-1}$, which satisfies
\begin{align}\label{t_varphi}
	\dot t_\varphi(s)=\frac{L_\varphi+(b-a)}{|\dot \varphi(t_\varphi(s))|+1}\qquad \qquad \forall s\in [0,1].
\end{align} 
In particular $	\dot t_\varphi(s)\leq L_\varphi+(b-a)$ for all $s\in [0,1]$. A similar definition applies to a function
 $\gamma\in BV([a,b];\R^m)$, for which  we denote  $L_\gamma:=|\dot \gamma|([a,b])$ and 
\begin{align}\label{s_gamma}
s_\gamma(t)=\frac{1}{L_\gamma+(b-a)}\big(|\dot\gamma|([a,t))+(t-a)\big), \qquad \qquad \forall t\in [a,b],
\end{align}
which is strictly increasing with jumps set $S_\gamma$, the jump set of $\gamma$; moreover 
$$s_\gamma(t_1)-s_\gamma(t_2)\geq \frac{t_1-t_2}{L_\gamma+(b-a)},\qquad \qquad 0\leq t_2\leq t_1\leq1,$$
and so it follows that if  $ t_\gamma:=s_\gamma^{-1}:[0,1]\rightarrow [a,b]$ is the inverse of $s_\gamma$ that is constant on $[s_\gamma(t^-),s_\gamma(t^+)]$, for all $t\in S_\gamma$, we have
\begin{align*}
t_\gamma(s_1)-t_\gamma(s_2)=|t_\gamma(s_1)-t_\gamma(s_2)|\leq (s_1-s_2)(L_\gamma+(b-a)),\qquad \qquad 0\leq s_2\leq s_1\leq1.
\end{align*}
Hence $t_\gamma$ is Lipschitz continuous with Lipschitz constant $L_\gamma+(b-a)$.
\begin{definition}
Given $\gamma\in BV([a,b];\R^m)$ we define $\overline \gamma:[0,1]\rightarrow \R^m$ as  
\begin{align}\label{gencurve}
	\overline \gamma(s)=\begin{cases}
		\frac{\gamma(t^+)(s-s_\gamma(t^-))+\gamma(t^-)(s_\gamma(t^+)-s)}{s_\gamma(t^+)-s_\gamma(t^-)}&\text{if }s\in [s_\gamma(t^-),s_\gamma(t^+)],\\
		\gamma(t_\gamma(s))&\text{otherwise.}
	\end{cases}
\end{align}
\end{definition}
Obviously this definition applies also when $\gamma=\varphi$ is Lipschitz continuous, and in this case 
it simply holds $\overline \varphi(s)=\varphi(t_\varphi(s))$ that is Lipschitz continuous and satisfies
\begin{align}
\left|\frac{d}{ds}\overline \varphi(s)\right|=\left|\dot \varphi(t_\varphi(s))\dot t_\varphi(s)\right|\leq L_\varphi+(b-a),\qquad \qquad \text{for a.e. }s\in [0,1].
\end{align}
The same is true for $\overline \gamma$ when $\gamma\in  BV([a,b];\R^m)$; we will obtain this as a consequence of the following result.

\begin{prop}\label{prop_convstrict}
Let $\gamma\in BV([a,b];\R^m)$ and let $(\varphi_k)\subset \textrm{Lip}([a,b];\R^m)$ be a sequence of maps converging strictly to $\gamma$ as $k\rightarrow \infty$. The functions $\overline \varphi_k:=\varphi_k\circ t_{\varphi_k}:[0,1]\rightarrow \R^m$ are Lipschitz continuous with uniformly bounded Lipschitz constants and \begin{align}\label{conv_curves}
&\overline\varphi_k\rightarrow \overline \gamma\qquad \text{strictly in $BV([0,1];\R^m)$ and weakly star in }W^{1,\infty}([0,1];\R^m),\nonumber\\
&s_{\varphi_k}\rightarrow s_\gamma\qquad \text{strictly in }BV([a,b]),\\
&t_{\varphi_k}\rightarrow t_\gamma\qquad \text{weakly star in }W^{1,\infty}([0,1]).\nonumber
\end{align}  
Moreover there exists a function $a_\gamma:\R^+\rightarrow \R^+$ depending only on $\gamma$ and such that $a_\gamma(t)\rightarrow 0$ when $t\rightarrow0^+$, and 
$$\|s_{\varphi}-s_\gamma\|_{L^1}+\|\overline\varphi-\overline \gamma\|_{L^\infty}\leq a_\gamma(d_s(\varphi,\gamma)),$$
for all $\varphi\in \textrm{Lip}([a,b];\R^m)$. 
\end{prop}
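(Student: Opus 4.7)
My plan is to establish the three convergences in a bootstrap fashion: first the distribution functions $s_{\varphi_k}$, then their inverses $t_{\varphi_k}$, and finally the reparametrized curves $\overline{\varphi}_k=\varphi_k\circ t_{\varphi_k}$. For the first, I would use that strict $BV$-convergence $\varphi_k\to\gamma$ implies $|D\varphi_k|\rightharpoonup|D\gamma|$ weakly* as Radon measures on $[a,b]$ (a consequence of Theorem \ref{teo_32} applied to $\mu_k=D\varphi_k$). Since $s_\gamma$ is essentially the normalized distribution function of $|D\gamma|+\mathcal{L}^1$, the portmanteau theorem yields $s_{\varphi_k}(t)\to s_\gamma(t)$ at every $t\notin S_\gamma$; monotonicity and uniform boundedness in $[0,1]$ upgrade this to $L^1$-convergence, and combined with $|Ds_{\varphi_k}|([a,b])=1=|Ds_\gamma|([a,b])$ this yields strict $BV$-convergence. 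The functions $t_{\varphi_k}$ are Lipschitz with constants $L_{\varphi_k}+(b-a)$, uniformly bounded because $L_{\varphi_k}\to L_\gamma$; Ascoli--Arzel\`a gives precompactness in $C([0,1])$, and inverting the monotone convergence of the previous step gives $t_{\varphi_k}\to t_\gamma$ pointwise at every continuity point of $t_\gamma$, a dense set. This upgrades to uniform convergence, and then to weak* convergence in $W^{1,\infty}([0,1])$ by the uniform $W^{1,\infty}$-bound together with distributional convergence of derivatives.

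The third and critical step is to show $\overline{\varphi}_k\to\overline{\gamma}$ uniformly on $[0,1]$. Each $\overline{\varphi}_k$ is Lipschitz with constant $\leq L_{\varphi_k}+(b-a)$, so Ascoli provides a subsequential uniform limit $\psi$, and the task reduces to the identification $\psi=\overline{\gamma}$. I would split $[0,1]$ into the jump intervals $I_t=(s_\gamma(t^-),s_\gamma(t^+))$ for $t\in S_\gamma$, and their complement $E$. On $E$, the subset $E_0=\{s_\gamma(t_0):t_0\in[a,b]\setminus S_\gamma\}$ is dense by monotonicity of $s_\gamma$; passing to a further subsequence along which $\varphi_k\to\gamma$ pointwise almost everywhere, the identity $\overline{\varphi}_k(s_{\varphi_k}(t_0))=\varphi_k(t_0)$, combined with $s_{\varphi_k}(t_0)\to s_\gamma(t_0)$ from the first step and $\overline{\varphi}_k\to\psi$ uniformly, yields $\psi(s_\gamma(t_0))=\gamma(t_0)=\overline{\gamma}(s_\gamma(t_0))$ at almost every continuity point $t_0$; continuity of $\psi$ and $\overline{\gamma}$ extends the identity to all of $E$. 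On each jump interval $I_t$, both $\psi$ and $\overline{\gamma}$ are $(L_\gamma+(b-a))$-Lipschitz and share the boundary values $\gamma(t^{\pm})$ at $s_\gamma(t^{\pm})$; since $|\gamma(t^+)-\gamma(t^-)|=(L_\gamma+(b-a))\,|I_t|$, the Lipschitz bound is saturated, forcing $\psi'$ to have constant magnitude $L_\gamma+(b-a)$ and to be aligned with $\gamma(t^+)-\gamma(t^-)$ almost everywhere on $I_t$. Hence $\psi$ coincides with the affine interpolation defining $\overline{\gamma}$ in \eqref{gencurve}. Uniqueness of the subsequential limit promotes the convergence to the full sequence, and $|D\overline{\varphi}_k|([0,1])=L_{\varphi_k}\to L_\gamma=|D\overline{\gamma}|([0,1])$ upgrades uniform convergence to strict $BV$-convergence; weak* convergence in $W^{1,\infty}([0,1];\R^m)$ is then obtained as for $t_{\varphi_k}$.

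The existence of the modulus $a_\gamma$ I would deduce by a standard compactness-and-contradiction argument: if no such function existed, there would be $\varepsilon>0$ and a sequence $\varphi_k\in\textrm{Lip}([a,b];\R^m)$ with $d_s(\varphi_k,\gamma)\to 0$ yet $\|s_{\varphi_k}-s_\gamma\|_{L^1}+\|\overline{\varphi}_k-\overline{\gamma}\|_{L^\infty}\geq\varepsilon$, contradicting the convergences established above. The main obstacle is the identification step for $\psi$ on the set $E$, which may be closed with empty interior or even fractal-like; the argument relies delicately on the density of $E_0$ in $E$ (inherited from the monotonicity of $s_\gamma$), the uniform equicontinuity of the $\overline{\varphi}_k$, and the almost-everywhere pointwise convergence extracted along a subsequence from the $L^1$-convergence.
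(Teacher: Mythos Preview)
Your proposal is correct and follows essentially the same architecture as the paper's proof: convergence of $s_{\varphi_k}$, then of $t_{\varphi_k}$ via uniform Lipschitz bounds and identification of subsequential limits, then of $\overline{\varphi}_k$ via Ascoli compactness together with the saturated-Lipschitz argument on jump intervals, and finally the modulus $a_\gamma$ by contradiction. The only minor variation is in the identification of the limit on the complement of the jump intervals, where you track the identity $\overline{\varphi}_k(s_{\varphi_k}(t_0))=\varphi_k(t_0)$ using uniform convergence of $\overline{\varphi}_k$, whereas the paper fixes $\sigma$ and estimates $|\overline{\varphi}_k(\sigma)-\overline{\gamma}(\sigma)|$ directly via a triangle-inequality splitting; both routes are valid.
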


 We remark that Proposition \ref{prop_convstrict} can be obtained by inspecting the arguments leading to \cite[Lemma 2.10]{C} and \cite[Lemma 2.7]{BCS2}. For the reader convenience and for the sake of completeness we give the proof. 
\begin{proof}
	Let us denote $L_\gamma:=|\dot\gamma|([a,b])$, and $L_k:=|\dot \varphi_k|([a,b])$ the total variations of $\gamma$ and $\varphi_k$ respectively. To shortcut the notation we denote $s_{\varphi_k}:[a,b]\rightarrow [0,1]$ in \eqref{s_varphi} by $s_k=s_{\varphi_k}$ and its inverse $t_{\varphi_k}:[0,1]\rightarrow [a,b]$ in \eqref{t_varphi} as $t_k=t_{\varphi_k}$. Moreover we recall the definition of $s_\gamma\in BV([a,b])$ given in \eqref{s_gamma}.
	
	\textit{Step 1: Convergence of $s_{\varphi_k}$ and $t_{\varphi_k}$.} Thanks to the strict convergence of $\varphi_k$ to $\gamma$, it is easy to see that $s_k\rightarrow  s_\gamma$ pointwise a.e. and strictly in $BV([a,b])$. In particular, if $\gamma$ is continuous at $t\in[a,b]$, then $s_k(t)\rightarrow s_\gamma(t)$. 
	Moreover, $s_\gamma$ is strictly increasing, and its jump set coincides with the jump set $S_\gamma$ of $\gamma$.
	
	As for $t_k$, due to the fact that its Lipschitz constant is less than or equal to $L_k+(b-a)$, and since $L_k\rightarrow L_\gamma$, we conclude that there is a Lipschitz function $\tau:[0,1]\rightarrow [a,b]$ such that, up to a subsequence, 
	$$t_k\rightharpoonup \tau\qquad \text{ weakly star in }W^{1,\infty}([0,1]),$$
	and hence also pointwise on $[0,1]$. We claim that $\tau=t_\gamma=s_\gamma^{-1}$, and so, by uniqueness of the limit, we will also infer that the whole sequence $t_k$ converges to $t_\gamma$.   
	
	Notice that $\tau$ is a non-decreasing and continuous mapping $[0,1]$ onto $[a,b]$; let then $\sigma\in [0,1]$ be so that $\tau(\sigma)\notin S_\gamma$. Therefore, for any $\varepsilon>0$ we can find $0<\delta\leq \varepsilon$ so that $I_\delta=(\tau(\sigma)-\delta,\tau(\sigma)+\delta)$ enjoies $|\dot\gamma|(I_\delta)<\varepsilon$, and in addition $\tau(\sigma)-\delta\notin S_\gamma$ and $\tau(\sigma)+\delta\notin S_\gamma$. The last condition implies that $|\dot\varphi_k|(I_\delta)\rightarrow|\dot\gamma|(I_\delta)$, and so
	\begin{align*}
\lim_{k\rightarrow \infty}|s_k(t_k(\sigma))-s_k(\tau(\sigma))|&=\lim_{k\rightarrow \infty}\frac{1}{L_k+(b-a)}\left|\int_{\tau(\sigma)}^{t_k(\sigma)}|\dot\varphi_k|+1dr\right|\\&\leq\frac{1}{L_\gamma+(b-a)} \lim_{k\rightarrow \infty}\int_{I_\delta}|\dot\varphi_k|+1dr\leq\frac{3\varepsilon}{L_k+(b-a)}.
	\end{align*}
	By arbitrariness of $\varepsilon$ we conclude that \begin{align}\label{limitsk}
s_k(\tau(\sigma))\rightarrow s_k(t_k(\sigma))=\sigma\qquad \qquad \text{ as }k\rightarrow\infty.
	\end{align}
On the other hand $s_k(\tau(\sigma))\rightarrow s_\gamma(\tau(\sigma))$, so we conclude $s_\gamma(\tau(\sigma))=\sigma$ for all $\sigma$ with $\tau(\sigma)\notin S_\gamma$. This implies that $\tau(\sigma)=t_\gamma(\sigma)$ for any $\sigma$ such that $\tau(\sigma)\notin S_\gamma$, but now, since $\tau$ is continuous non-decresing and so is $t_\gamma$ (which in addition is constant on the connected components of $t_\gamma^{-1}(S_\gamma)$), necessarily $\tau(\sigma)=t_\gamma(\sigma)$ for all $\sigma\in [0,1]$.

		\textit{Step 2: Convergence of ${\overline \varphi_k}$.}
	Recalling  that 
	$$|\frac{d}{ds}\overline \varphi_k(s)|\leq {L_k+b-a}\qquad\qquad \text{ for a.e. }s\in [0,1],$$
	and since $L_k\rightarrow L_\gamma$ as $k\rightarrow+\infty$, $\overline \varphi_k$ are uniformly bounded in $W^{1,\infty}([0,1];\R^m)$, and so, up to a subsequence, they converge weakly star to some limit $\zeta\in W^{1,\infty}([0,1];\R^m)$ with 
		\begin{align}\label{lunghezzasalto}
		|\frac{d}{ds}\zeta(s)|\leq {L_\gamma+b-a}\qquad\qquad \text{ for a.e. }s\in [0,1].
	\end{align} We have to prove  that this limit is $\overline \gamma$, indipendently from the subsequence; as a consequence it will follow that the full sequence $\overline \varphi_k$ converges to  $\overline \gamma$. 

To this purpose we fix $$\sigma\in [0,1]\setminus\left(\cup_{t\in S_\gamma}[s_\gamma(t^-),s_\gamma(t^+)]\right);$$
this is equivalent to require that $t_\gamma(\sigma)\notin S_\gamma$. Thus we write
\begin{align*}
|\overline \varphi_k(\sigma)-\overline\gamma(\sigma)|&=| \varphi_k(t_k(\sigma))-\gamma(t_\gamma(\sigma))|\leq | \varphi_k(t_k(\sigma))-\varphi_k(t_\gamma(\sigma))|+| \varphi_k(t_\gamma(\sigma))-\gamma(t_\gamma(\sigma))|\\
&\leq \left|\int_{t_k(\sigma)}^{t_\gamma(\sigma)}|\dot \varphi_k|+1 \;dr\right|+| \varphi_k(t_\gamma(\sigma))-\gamma(t_\gamma(\sigma))|\\
&=(L_{k}+(b-a))(s_k(t_k(\sigma))-s_k(t_\gamma(\sigma)))+| \varphi_k(t_\gamma(\sigma))-\gamma(t_\gamma(\sigma))|
\end{align*}
and thanks to \eqref{limitsk} and the fact that $\varphi_k\rightarrow \gamma$ pointwise a.e. on $[a,b]\setminus S_\gamma$, we conclude that 
$$\varphi_k(\sigma)\rightarrow \overline \gamma(\sigma)\qquad \text{ for a.e. }\sigma \in [0,1]\setminus\left(\cup_{t\in S_\gamma}[s_\gamma(t^-),s_\gamma(t^+)]\right).$$
%
%
%
%
%
Therefore we conclude $\zeta=\overline \gamma$ a.e. on  $[0,1]\setminus\left(\cup_{t\in S_\gamma}[s_\gamma(t^-),s_\gamma(t^+)]\right)$.
We want to show that $\zeta(s)$ coincides with the first line in \eqref{gencurve} when $s\in [s_\gamma(t^-),s_\gamma(t^+)]$, for some $t\in S_\gamma$.

	If $t\in S_\gamma$, there are sequences $t^-_j\rightarrow t^-$ and $t^+_j\rightarrow t^+$ as $j\rightarrow \infty$, such that $t^\pm_j$ are continuity points of $\gamma$ (and of $s_\gamma$). In particular 
	$\gamma(t^\pm_j)=\overline \gamma(s_\gamma(t^\pm_j))\rightarrow \overline \gamma(s_\gamma(t)^\pm)$ as $j\rightarrow \infty$,  so 
	$$\overline \gamma(s_\gamma(t)^\pm)=\gamma(t^\pm).$$ Moreover, since $s_\gamma(t)^+= s_\gamma(t)^-+\frac{1}{L_\gamma+b-a}|\dot\gamma|(\{t\})$ we deduce that $$s_\gamma(t)^+- s_\gamma(t)^-=\frac{1}{L_\gamma+b-a}|\gamma(t^+)-\gamma(t^-)|=\frac{1}{L_\gamma+b-a}|\overline \gamma( s_\gamma(t)^+)-\overline \gamma( s_\gamma(t)^-)|.$$
	We conclude that the curve $\overline \gamma\res[s(t^-),s(t^+)]$ is a curve connecting $\overline \gamma( s_\gamma(t)^-)$ to $\overline \gamma( s_\gamma(t)^+)$ on an interval of length $\frac{1}{L_\gamma+b-a}|\overline \gamma(s_\gamma(t)^+)-\overline \gamma( s_\gamma(t)^-)|$; by  \eqref{lunghezzasalto} this curve must necessarily be the constant speed parametrization of the segment with endpoints $\overline \gamma(s_\gamma(t)^-)$ and $\overline \gamma(s_\gamma(t)^+)$, namely $\zeta(s)$ coincides with the interpolation in \eqref{gencurve}. We conclude then also the first thesis in \eqref{conv_curves}.

\textit{Step 3:} To prove the last statement, we set 
$$a_\gamma(t):=\sup\{\|s_\varphi-s_\gamma\|_{L^1}+\|\overline \varphi-\overline \gamma\|_{L^\infty}:\varphi\in \textrm{Lip}([a,b];\R^m),\;d_s(\varphi,\gamma)\leq t\}.$$
Assume by contradiction that there exists a sequence of positive numbers $t_k\searrow0$ such that $\lim_{k\rightarrow \infty}a_\gamma(t_k)>0.$ Then, by definition of $a_\gamma$ we can find functions $\psi_k\in \textrm{Lip}([a,b];\R^m)$ such that $d_s(\psi_k,\gamma)\leq t_k$ and $$ \lim_{k\rightarrow \infty}(\|s_{\psi_k}-s_\gamma\|_{L^1}+\|\overline {\varphi}_k-\overline \gamma\|_{L^\infty})>0.$$
This is a clear contradiction with \eqref{conv_curves}, hence the thesis follows. 
\end{proof}
\begin{cor}\label{cor_varbar}
Let $\gamma\in BV([a,b];\R^m)$, then $\overline \gamma$ is Lipschitz continuous with Lipschitz constant $L_\gamma+(b-a)$.
\end{cor}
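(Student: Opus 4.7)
The plan is to obtain the Lipschitz bound by approximation, exploiting Proposition \ref{prop_convstrict} together with the lower semicontinuity of the $L^\infty$ norm under weak-$*$ convergence in $W^{1,\infty}$.

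First I would invoke Theorem \ref{approx_teoBV} to produce a sequence $(\varphi_k) \subset C^\infty([a,b];\R^m) \subset \textrm{Lip}([a,b];\R^m)$ with $\varphi_k \to \gamma$ strictly in $BV([a,b];\R^m)$. For each $k$, since $\varphi_k$ is Lipschitz, the analysis carried out right before Proposition \ref{prop_convstrict} already yields that $\overline{\varphi}_k = \varphi_k \circ t_{\varphi_k}$ is Lipschitz with
\[
\left|\tfrac{d}{ds}\overline{\varphi}_k(s)\right| \leq L_{\varphi_k} + (b-a) \quad \text{for a.e.\ } s \in [0,1].
\]

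Next, by strict convergence we have $L_{\varphi_k} \to L_\gamma$, so the uniform bound on the Lipschitz constants of $\overline{\varphi}_k$ reads $L_{\varphi_k} + (b-a) \to L_\gamma + (b-a)$. Proposition \ref{prop_convstrict} furnishes $\overline{\varphi}_k \rightharpoonup^* \overline{\gamma}$ weakly-$*$ in $W^{1,\infty}([0,1];\R^m)$, which in particular gives $\tfrac{d}{ds}\overline{\varphi}_k \rightharpoonup^* \tfrac{d}{ds}\overline{\gamma}$ in $L^\infty([0,1];\R^m)$. By the standard lower semicontinuity of the $L^\infty$ norm with respect to weak-$*$ convergence,
\[
\left\|\tfrac{d}{ds}\overline{\gamma}\right\|_{L^\infty} \;\leq\; \liminf_{k\to\infty} \left\|\tfrac{d}{ds}\overline{\varphi}_k\right\|_{L^\infty} \;\leq\; \liminf_{k\to\infty}\bigl(L_{\varphi_k} + (b-a)\bigr) \;=\; L_\gamma + (b-a).
\]
Since $\overline{\gamma} \in W^{1,\infty}([0,1];\R^m)$ admits a Lipschitz representative whose Lipschitz constant equals the essential supremum of the derivative, the conclusion follows.

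There is no real obstacle here: all the hard work, namely the identification of the weak-$*$ limit of $\overline{\varphi}_k$ with the function $\overline{\gamma}$ defined piecewise in \eqref{gencurve}, has already been done in Proposition \ref{prop_convstrict}. An alternative, purely direct, route would be to bound $|\overline{\gamma}(s_1) - \overline{\gamma}(s_2)|$ by hand, splitting into the case where both points lie outside every interval $[s_\gamma(t^-), s_\gamma(t^+)]$ (in which case the estimate reduces, via $s_\gamma(t_\gamma(s_2)) - s_\gamma(t_\gamma(s_1)) = s_2 - s_1$, to $|\gamma(t_\gamma(s_2)) - \gamma(t_\gamma(s_1))| \leq |\dot\gamma|\bigl([t_\gamma(s_1),t_\gamma(s_2))\bigr) \leq (s_2 - s_1)(L_\gamma + (b-a))$) and the case where one or both points fall inside a jump interval (where $\overline{\gamma}$ is an affine interpolation of slope exactly $L_\gamma + (b-a)$, because $s_\gamma(t^+) - s_\gamma(t^-) = |\gamma(t^+) - \gamma(t^-)|/(L_\gamma + (b-a))$). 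The approximation route above is however strictly shorter and avoids the case analysis.
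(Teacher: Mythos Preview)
Your proposal is correct and follows essentially the same approach as the paper: approximate $\gamma$ strictly by Lipschitz maps and invoke Proposition \ref{prop_convstrict}. You have simply made explicit the lower semicontinuity step that the paper leaves implicit (indeed, the bound \eqref{lunghezzasalto} in the proof of Proposition \ref{prop_convstrict} is exactly the estimate you derive).
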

\begin{proof}
It is sufficient to approximate $\gamma$ in the strict topology of $BV([a,b];\R^m)$ by Lipschitz maps, and the thesis follows from Proposition \ref{prop_convstrict}.
\end{proof}
%

\textbf{Interpolation between Lipschitz curves:} Let $h>0$ be fixed and let $[a,b]$, $a<b$, be an interval. 
For Lipschitz maps $\varphi,\psi:[a,b]\rightarrow \R^m$ we introduce the following interpolations: $\Phi_{\varphi,\psi}:[a,b]\times [0,h]\rightarrow \R^m$  given by
\begin{align}\label{Phi_interp}
\Phi_{\varphi,\psi}(t,r):=\varphi\Big(t_\varphi\big(s_\varphi(t)\frac{r}{h}+s_\psi(t)\frac{h-r}{h} \big)\Big),
\end{align}
that satisfies $\Phi_{\varphi,\psi}(t,h)=\varphi(t)$ and $\Phi_{\varphi,\psi}(t,0)=\varphi(t_\varphi\circ s_\psi(t))$,  
and the mapping $\Psi_{\varphi,\psi}:[a,b]\times [0,h]\rightarrow \R^m$ defined by
\begin{align}\label{Psi_interp}
	\Psi_{\varphi,\psi}(t,r):=\varphi\Big(t_\varphi\big(s_\psi(t)\big)\Big)\frac{h-r}{h}+\psi\Big(t_\psi\big(s_\psi(t)\big)\Big)\frac{r}{h}=\overline \varphi(s_\psi(t))\frac{h-r}{h}+\overline \psi(s_\psi(t))\frac{r}{h},
\end{align}
where we recall $\overline \varphi(s)=\varphi\circ t_\varphi(s)$ and $\overline \psi(s)=\psi\circ t_\psi(s)$. This satisfies $\Psi_{\varphi,\psi}(t,0)=\overline \varphi(s_\psi(t))=\Phi_{\varphi,\psi}(t,0)$ and $\Psi_{\varphi,\psi}(t,h)=\overline \psi(s_\psi(t))=\psi(t)$.
We compute the derivatives of $\Phi_{\varphi,\psi}$ and $\Psi_{\varphi,\psi}$ and for a.e. $(t,r)\in [a,b]\times [0,h]$ we find
\begin{align*}
	\frac{\partial }{\partial t}\Phi_{\varphi,\psi}(t,r)&=\dot \varphi\Big(t_\varphi\big(s_\varphi(t)\frac{r}{h}+s_\psi(t)\frac{h-r}{h} \big)\Big)\dot t_\varphi\big(s_\varphi(t)\frac{r}{h}+s_\psi(t)\frac{h-r}{h} \big)\big(\dot s_\varphi(t)\frac{r}{h}+\dot s_\psi(t)\frac{h-r}{h}\big),\\
		\frac{\partial }{\partial r}\Phi_{\varphi,\psi}(t,r)&=\dot \varphi\Big(t_\varphi\big(s_\varphi(t)\frac{r}{h}+s_\psi(t)\frac{h-r}{h} \big)\Big)\dot t_\varphi\big(s_\varphi(t)\frac{r}{h}+s_\psi(t)\frac{h-r}{h} \big)\frac{s_\varphi(t)-s_\psi(t)}{h},\\
		\frac{\partial }{\partial t}\Psi_{\varphi,\psi}(t,r)&=\left(\dot \varphi\Big(t_\varphi\big(s_\psi(t)\big)\Big)\dot t_\varphi\big(s_\psi(t)\big)\frac{h-r}{h}+\dot \psi\Big(t_\psi\big(s_\psi(t)\big)\Big)\dot t_\psi\big(s_\psi(t)\big)\frac{r}{h}\right)\dot s_\psi(t)\\
		&=\left(\frac{h-r}{h}\dot{\overline \varphi}(s_\psi(t))+\frac{r}{h}\dot{\overline \psi}(s_\psi(t))\right)\dot s_\psi(t),\\
		\frac{\partial }{\partial r}\Psi_{\varphi,\psi}(t,r)&=\frac1h\left(\psi\Big(t_\psi\big(s_\psi(t)\big)\Big)-\varphi\Big(t_\varphi\big(s_\psi(t)\big)\Big)\right)=\frac{{\overline \psi}(s_\psi(t))-{\overline \varphi}(s_\psi(t))}{h},
\end{align*}
which, by \eqref{s_varphi} and \eqref{t_varphi}, lead to the following estimates
\begin{align*}
\left|\frac{\partial }{\partial t}\Phi_{\varphi,\psi}(t,r)\right|&\leq (L_\varphi+(b-a))\left|\dot s_\varphi(t)\frac{r}{h}+\dot s_\psi(t)\frac{h-r}{h}\right|\\
&\leq (L_\varphi+(b-a))\left(\frac{|\dot \varphi(t)|+1}{L_\varphi+(b-a)}+\frac{|\dot \psi(t)|+1}{L_\psi+(b-a)}\right),\\
\left|\frac{\partial }{\partial r}\Phi_{\varphi,\psi}(t,r)\right|&\leq\frac{L_\varphi+(b-a)}{h}{|s_\psi(t)-s_\varphi(t)|};
\end{align*}
furthermore we also have
\begin{align}\label{stimadetPhi}
	&\frac{\partial }{\partial t}\Phi_{\varphi,\psi}(t,r)\wedge 	\frac{\partial }{\partial r}\Phi_{\varphi,\psi}(t,r)=\det\left(\nabla \Phi_{\varphi,\psi}(t,r) \right)=0,
\end{align}
for almost every $(t,r)\in [a,b]\times [0,h]$, due to the fact that the image of $\Phi_{\varphi,\psi}$ is one dimensional.
Finally we can estimate on $D:=[a,b]\times [0,h]$ the integral
\begin{align}\label{stimanablaPhi}
\int_D|\nabla \Phi_{\varphi,\psi}(t,r)|dtdr&\leq (L_\varphi+(b-a)) \int_D\frac{|\dot \varphi(t)|+1}{L_\varphi+(b-a)}+\frac{|\dot \psi(t)|+1}{L_\psi+(b-a)}+\frac{|s_\psi(t)-s_\varphi(t)|}{h}dtdr\nonumber\\
&= 2h(L_\varphi+(b-a))+(L_\varphi+(b-a))\int_a^b|s_\psi(t)-s_\varphi(t)|dt.
\end{align}
As for $\Psi_{\varphi,\psi}$, by the estimates
\begin{align*}
	&\left|	\frac{\partial }{\partial t}\Psi_{\varphi,\psi}(t,r)\right|= \left|\frac{h-r}{h}\dot{\overline \varphi}(s_\psi(t))+\frac{r}{h}\dot{\overline \psi}(s_\psi(t))\right|\dot s_\psi(t)\leq (L_\varphi+L_\psi+(b-a))\dot s_\psi(t),\\
	&\left|	\frac{\partial }{\partial r}\Psi_{\varphi,\psi}(t,r)\right|\leq \frac{|{\overline \psi}(s_\psi(t))-{\overline \varphi}(s_\psi(t))|}{h},
\end{align*}
we can write
\begin{align}\label{stimanablaPsi}
\int_D|\nabla \Psi_{\varphi,\psi}(t,r)|dtdr&\leq \int_D(L_\varphi+L_\psi+(b-a))\dot s_\psi(t)+\frac{|{\overline \psi}(s_\psi(t))-{\overline \varphi}(s_\psi(t))|}{h}dtdr\nonumber\\
&= (L_\varphi+L_\psi+(b-a))h+\int_a^b|{\overline \psi}(s_\psi(t))-{\overline \varphi}(s_\psi(t))|\dot s_\psi(t)dt\nonumber\\
&=(L_\varphi+L_\psi+(b-a))h+\int_0^{1}|{\overline \psi}(s)-{\overline \varphi}(s)|ds,
\end{align}
where we have used that $\int_a^b\dot s_\psi(t)dt=1$.
Finally
\begin{align}\label{stimadetPsi}
	 \nonumber\int_D|\frac{\partial }{\partial t}\Psi_{\varphi,\psi}(t,r)\wedge \frac{\partial }{\partial r}\Psi_{\varphi,\psi}(t,r)|dtdr&\leq(L_\varphi+L_\psi+(b-a))\int_D\frac{|{\overline \psi}(s_\psi(t))-{\overline \varphi}(s_\psi(t))|}{h}\dot s_\psi(t)dtdr\\
	&\nonumber= (L_\varphi+L_\psi+(b-a))\int_a^b|{\overline \psi}(s_\psi(t))-{\overline \varphi}(s_\psi(t))|\dot s_\psi(t)dt\\&=(L_\varphi+L_\psi+(b-a))\int_0^{1}|{\overline \psi}(s)-{\overline \varphi}(s)|ds.
\end{align}

\subsection{Tubular neighborhoods of regular curves}

Given a set $A\subset \R^2$ we denote by $\text{{\rm dist}}(x, A)$ the distance from $x$ to $A$, and by $\text{{\rm dist}}^\pm(x,A)$ the signed distance from $x$ to $A$, defined as 
$$\text{{\rm dist}}^\pm(x,A):=\begin{cases}
	\text{{\rm dist}}(x,A)&\text{if }x\in A^c,\\
	-\text{{\rm dist}}(x,A^c)&\text{if }x\in A,
\end{cases}$$
where $A^c:=\R^2\setminus A$. 
We consider the following regularity assumption (R) of a set $A$:
\begin{itemize}
\item[(R)] We assume that $A$ is a connected bounded open set with boundary of class $C^3$.
\end{itemize}
If $A\subset\R^2$ satisfies (R), then $\partial A$ consists of finitely many loops $\Gamma_i$, $i=0,1,\dots,N$, of class $C^3$, labeled so that, if $E_i$ denotes the bounded connected component of $\R^2\setminus \Gamma_i$, then
\begin{align}\label{deca}
	A=E_0\setminus (\cup_{i=1}^NE_i).
\end{align}
Notice that the presence of a unique big component $E_0$ is due to the hypothesis that $A$ is connected\footnote{If $A$ instead has $K>1$ connected components, then every component enjoys a decomposition as \eqref{deca}.}.

\textbf{Sets with $C^3$-boundary and tubular neighborhoods}: 
Let $A\subset\R^2$ be a set satisfying (R). 
 For $\delta\in(0,1)$ small enough there exists a tubular neighborhood $T_\delta$ of $\partial A$, given by 
$$T_\delta:=\{x\in \R^2:\text{{\rm dist}}(x,\partial A)<\delta\}.$$
We parametrize $T_\delta$ with $(t,r)\in \partial A\times (-\delta,\delta)$ so that 
$$\partial A_r:=\{x\in \R^2:\text{{\rm dist}}^\pm(x, A)=r\}$$
consists of $N+1$ curves $\Gamma_r^i$ of class $C^2$, namely
$$\Gamma_r^0:=\{x\in \R^2:\text{{\rm dist}}^\pm(x,E_0)=r\}\qquad \qquad \Gamma_r^i:=\{x\in \R^2:\text{{\rm dist}}^\pm(x,E_i)=-r\}.$$ 
We denote $T_\delta=\cup_{i=1}^NT^i_\delta $ where $T^i_\delta $ is a $\delta$-neighborhood of $\Gamma_i$, namely $$T^i_\delta=\{x\in \R^2:\text{{\rm dist}}(x,\Gamma_i)<\delta\}.$$

For simplicity\footnote{The following argument applies to all connected components of $\Gamma$ in the general case.}, let us assume that the number $N$ of holes in $A$ is zero, i.e., $A$ is simply connected; there is  $\gamma\in C^3([a,b];\R^2)$ a Jordan curve parametrized by arc-length enclosing the open bounded connected and simply-connected set $A$, $\Gamma=\gamma([a,b])$.
We will denote
 $$T^+_\delta=\{x\in \R^2:\text{{\rm dist}}^\pm(x,A)\in (0,\delta)\},\qquad \qquad T^-_\delta=\{x\in \R^2:\text{{\rm dist}}^\pm(x,A)\in (-\delta,0)\},$$
 the external and inner tubular neighborhoods of $\partial A$.
By the tubular neighborhood theorem, there exists a bi-Lipschitz bijection  $\mathcal T_\delta:[a,b)\times (-\delta,\delta)\rightarrow T_\delta$,  such that 
$$|\det(\nabla \mathcal T_\delta(t,r))|=1+R_\delta(t,r),$$
where $\|R_\delta\|_{L^\infty}=o(1)\rightarrow 0$ as $\delta\rightarrow 0$.
Indeed one sets, for all $(t,r)\in [a,b)\times (-\delta,\delta)$,
\begin{align}\label{mathcalTdelta}
	\mathcal T_\delta(t,r):=\gamma(t)+r\dot\gamma(t)^\perp,
\end{align}
where $v^\perp=(-v_2,v_1)$, and it holds 
\begin{align*}
	&\frac{\partial}{\partial t}\mathcal T_\delta(t,r)=\dot\gamma(t)+r\ddot\gamma(t)^\perp,\qquad\qquad\qquad\qquad\qquad\frac{\partial}{\partial r}\mathcal T_\delta(t,r)=\dot\gamma(t)^\perp,\\
	&\det(\nabla \mathcal T_\delta)=1+r\dot\gamma(t)\cdot \ddot \gamma(t)^\perp=:1+R_\delta(t,r),\qquad \qquad |R_\delta(t,r)|\leq C_\gamma|r|\leq C_\gamma\delta,
\end{align*}
where, here and below, we denote by $C_\gamma>0$ a constant  depending  on $\gamma$ but independent of $\delta$ (and which might change from line to line).
Notice also that since $\gamma$ is of class $C^3$, $\nabla  \mathcal T_\delta$ is of class $C^1$,  and (since $\delta\in(0,1)$)
\begin{align*}
	|\nabla \mathcal T_\delta(t,r)|\leq |\dot\gamma(t)|+r|\ddot\gamma(t)|\leq C_\gamma+C_\gamma\delta\leq C_\gamma,
\end{align*}
Let $h\in (0,\delta)$. For $x\in T_\delta$ we have
$\nabla \mathcal T_h^{-1}(x)=\big(\nabla \mathcal T_h(\mathcal T_h^{-1}(x))\big)^{-1}$, so  $$\det(\nabla \mathcal T_h^{-1}(x))=\frac{1}{\det\big(\nabla \mathcal T_h(\mathcal T_h^{-1}(x))\big)}
=\frac{1}{1+R_h(\mathcal T_h^{-1}(x))}=1-\frac{R_h(\mathcal T_h^{-1}(x))}{1+R_h(\mathcal T_h^{-1}(x))},$$
and, if $h$ is small enough, we conclude 
\begin{align}\det(\nabla \mathcal T_h^{-1}(x))= 1+R'_h(x),\qquad \qquad \|R'_h\|_{L^\infty}\leq C_\gamma h.\label{detT-1}
\end{align}
Eventually, using that for a invertible matrix $A$ one has $A^{-1}=\cof(A)^T(\det A)^{-1}$, we conclude
\begin{align}
	&\nabla \mathcal T_h^{-1}(x)= \cof\big(\nabla \mathcal T_h(\mathcal T_h^{-1}(x))\big)^T(1+R'_h(x)),\nonumber\\
	&|\nabla \mathcal T_h^{-1}(x)|\leq C_\gamma+C_\gamma h\leq C_\gamma,
\end{align}
so $\mathcal T_h$ is bi-Lipschitz with a constant depending only on $\gamma$.
\medskip

\textbf{Restriction of BV-functions on curves:}
As above, let $A$ satisfy (R), assume that $A$ is simply connected, and let  $\gamma\in C^3([a,b];\R^2)$ be an arc-length parametrization of a Jordan curve $\Gamma=\partial A$. Let $T_\delta$ be a tubular neighborhood of $\Gamma$, $\delta\in(0,1)$ small enough.
 Let $\widehat \zeta:[a,b]\times(-\delta,\delta)\rightarrow\R^2$ be the map
\begin{align}\label{zetahat}
\widehat \zeta(t,r):=\frac{\frac{\partial \mathcal T_\delta}{\partial t}(t,r)}{|\frac{\partial \mathcal T_\delta}{\partial t}(t,r)|}=\frac{\dot\gamma(t)+r\ddot\gamma(t)^\perp}{|\dot\gamma(t)+r\ddot\gamma(t)^\perp|},
\end{align}
that is the oriented unit vector tangent to $\Gamma_{r}$ at the point $\gamma(t,r)$. Using that $\gamma$ parametrizes by arc-length, a tedious but straightforward computation shows that the map
\begin{align}
	\zeta(x):=\widehat\zeta(\mathcal T_\delta^{-1}(x)),\qquad \qquad x\in T_\delta,\label{zeta}
\end{align}
satisfies  $\zeta\in C^1(T_\delta;\mathbb S^1)$ and is  divergence free\footnote{We can also see this as follows: $\zeta$ is a unit vector such that $\zeta^\perp$ is orthogonal to the level sets of the signed distance function $d^\pm$ from $\Gamma$. In particular, since the distance function has gradient of length $1$ almost everywhere, $\zeta^\perp$ coincides with $\nabla d^\pm$ almost everywhere. If follows that $\div \zeta=\curl \zeta^\perp=\curl \nabla d^\pm=0$.}.
\begin{definition}\label{defc1}
Let $r\in (-\delta,\delta)$ and  $\varphi:\Gamma_r\rightarrow \R^m$; we say that $\varphi\in C^1(\Gamma_{r};\R^m)$ if $\varphi(\mathcal T_\delta(\cdot,r)):[a,b)\rightarrow \R^m$ is of class $C^1$.
\end{definition}

\begin{remark}\label{rem_39}
Given $\varphi\in C^1(\Gamma_{r};\R^m)$ we can extend it on $T_\delta$ by defining $\overline \varphi(t,r'):=\varphi(\gamma(t)+r\dot\gamma(t)^\perp)$ for all $r'\in (-\delta,\delta)$ and $t\in [a,b)$. The function $\overline \varphi\circ \mathcal T_\delta^{-1}(x)$ defined for all $x\in T_\delta$ is then an extension of $\varphi$ and is of class $C^1$. Indeed, clearly $\overline \varphi\in C^1([a,b)\times (-\delta,\delta))$, and so $\overline \varphi\circ \mathcal T_\delta^{-1}\in C^1(T_\delta)$ because $\mathcal T_\delta^{-1}$ is of class $C^1$. In particular, we conclude that every function $\varphi\in C^1(\Gamma_{r};\R^m)$ is the restriction to $\Gamma_r$ of a function of class $C^1(T_\delta;\R^m)$. Since it is also easy to see that every function of class $C^1(T_\delta;\R^m)$ has a $C^1$ restriction on $\Gamma_r$ as in Definition \ref{defc1}, 
we conclude that $\varphi \in C^1(\Gamma_{r};\R^m)$ if and only if it is the restriction of a function $\widehat \varphi\in C^1(T_\delta;\R^m)$ on $\Gamma_r$.
\end{remark}

\begin{definition}\label{tang_der}
Let $u:\Gamma_r\rightarrow \R^m$, we say that $u\in BV(\Gamma_{r};\R^m)$ if 
$$\sup\{\int_{\Gamma_r}u\cdot\Big(\sum_{j=1}^2D_j(\varphi \zeta_j)\Big)d\mathcal H^1:\varphi\in C^1(T_\delta;\R^m),|\varphi|\leq 1\}<+\infty.$$
We denote the supremum above by $|D_\zeta u|(\Gamma_r)$.
\end{definition}

Exploiting that $\zeta$ is divergence-free, we can write
$$|D_\zeta u|(\Gamma_{r})=\sup\{\int_{\Gamma_r}u\cdot D_\zeta\varphi d\mathcal H^1:\varphi\in C^1(T_\delta;\R^m),|\varphi|\leq 1\},$$
where $D_\zeta\varphi:=\sum_{j=1}^2D_j\varphi\zeta_j$.
Recalling that $\mathcal T_\delta(\cdot,r)$ is a parametrization of $\Gamma_r$, if $u\in BV(\Gamma_r;\R^m)$ we see that 
\begin{align*}
\int_{a}^b|\frac{d}{dt}u(\mathcal T_\delta(t,r))|dt&=\sup\{\int_a^b\frac{d}{dt}u(\mathcal T_\delta(t,r))\cdot \psi(\mathcal T_\delta(t,r))dt:\psi\in C^1(\Gamma_r;\R^m), |\psi|\leq 1\}\\
&=\sup\{\int_a^bu(\mathcal T_\delta(t,r))\cdot \frac{d}{dt}\psi(\mathcal T_\delta(t,r))dt:\psi\in C^1(\Gamma_r;\R^m), |\psi|\leq 1\}
\end{align*}
and, up to  extending $\psi$ to  $T_\delta$ as in Remark \ref{rem_39}, we have
$$\frac{d}{dt}\psi(\mathcal T_\delta(t,r))=\nabla \psi(\mathcal T_\delta(t,r))\frac{\partial \mathcal T_\delta}{\partial t}(t,r)=\nabla \psi(\mathcal T_\delta(t,r))\widehat\zeta(t,r)|\frac{\partial \mathcal T_\delta}{\partial t}(t,r)|,$$
so we conclude
\begin{align}\label{Dugammar}
	\int_{a}^b|\frac{d}{dt}u(\mathcal T_\delta(t,r))|dt
	&=\sup\{\int_{\Gamma_r}u\cdot D_\zeta\psi d\mathcal H^1:\psi\in C^1(\Gamma_r;\R^m), |\psi|\leq 1\}=|D_\zeta u|(\Gamma_{r}).
\end{align}

\begin{remark}\label{rem_stricttraces}
Equality \eqref{Dugammar} in particular implies that 
if $u_k,u\in BV(\Gamma_r;\R^m)$ are such that 
$$u_k\rightarrow u\qquad \qquad \text{ strictly in }BV(\Gamma_r;\R^m),$$
then also
$$u_k(\mathcal T_\delta(\cdot,r))\rightarrow u(\mathcal T_\delta(\cdot,r))\qquad \qquad \text{ strictly in }BV([a,b];\R^m),$$
and viceversa. More precisely, for all $r\in(-\delta,\delta)$ and any $v\in BV(\Gamma_r;\R^m)$ it holds
$$|D_\zeta v|(\Gamma_r)=|D_t(v\circ \mathcal T_\delta(\cdot,r))|(a,b),$$
and there are two positive constants $c_\delta,C_\delta$ depending only on $\Gamma$ and $\delta$ such that 
$$c_\delta \|u\circ\mathcal T_\delta(\cdot,r)\|_{L^1([a,b])}\leq \|u\|_{L^1(\Gamma_r)}\leq C_\delta \|u\circ\mathcal T_\delta(\cdot,r)\|_{L^1([a,b])}.$$
This follows from the bi-lipschitz property of $\mathcal T_\delta$ and on the fact that $|\frac{d}{dt} T_\delta(\cdot,r)|$ is close to $1$, for $r\in (-\delta,\delta)$.
\end{remark}
Given $v:T_\delta\rightarrow \R^m$ a Lipschitz map, then by coarea formula we can write
$$\int_{T_\delta}|\nabla v\zeta|dx=\int_{-\delta}^\delta\int_{\Gamma_r}|\nabla v\zeta|d\mathcal H^1dr=\int_{-\delta}^\delta\int_{\Gamma_r}|D_\zeta v|d\mathcal H^1dr,$$
and since $\zeta$ is a unit oriented tangent vector to $\Gamma_r$, $\nabla v\zeta=\sum_{j=1}^2D_jv\zeta_j$ represents the tangential derivative $D_\zeta v$ of $v$ to $\Gamma_r$. Now, $\mathcal T_\delta(\cdot,r)$ is a parametrization from $[a,b]$ of $\Gamma_r$, so we write
\begin{align}
&\int_{a}^b|\frac{d}{dt}v(\mathcal T_\delta(t,r))|dt=\int_a^b|\nabla v(\mathcal T_\delta(t,r))\frac{d\mathcal T_\delta}{dt}(t,r)|dt\nonumber\\
&=\int_a^b|\nabla v(\mathcal T_\delta(t,r))\zeta(t,r)||\frac{d\mathcal T_\delta}{dt}(t,r)|dt=\int_{\Gamma_r}|D_\zeta v|d\mathcal H^1,
\end{align}
and we conclude
\begin{align}
\int_{T_\delta}|D_\zeta v|dx=\int_{-\delta}^\delta\int_{a}^b|\frac{d}{dt}v(\mathcal T_\delta(t,r))|dtdr.
\end{align}

In the following lemma we discuss how strict convergence is inehrited on curves. 

\begin{lemma}\label{lemma_coareaBV}
Let $u_k:T_\delta\rightarrow \R^m$ be Lipschitz maps and let $u\in BV(T_\delta;\R^m)$ be such that 
$$u_k\rightarrow u\qquad \text{strictly in }BV(T_\delta;\R^m).$$
Then, for  a.e. $r\in(-\delta,\delta)$ the function $u\res \Gamma_r$ belongs to $BV(\Gamma_r;\R^m)$ and (up to a non-relabelled subsequence) $u_k\res\Gamma_r$ converge strictly in $BV(\Gamma_r;\R^m)$ to $u\res \Gamma_r$.
\end{lemma}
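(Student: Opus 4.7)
The plan is to combine Reshetnyak's continuity theorem with a coarea/BV-slicing argument, then close with a Fatou-type trick that promotes integrated convergence of slice total variations to pointwise convergence for a.e.\ slice.

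First I would use Theorem \ref{teo_32} to extract, from the strict convergence of $u_k$ to $u$ on $T_\delta$, the convergence of the tangential masses. Viewing $Du_k=\nabla u_k\,\mathcal L^2$ and $Du$ as $\R^{2m}$-valued Radon measures, strict convergence yields $Du_k\rightharpoonup Du$ weakly$^*$ together with $|Du_k|(T_\delta)\to |Du|(T_\delta)$. Since $\zeta\in C^1(T_\delta;\mathbb S^1)$ is bounded, the integrand $(x,M)\mapsto|M\zeta(x)|$ is continuous and bounded on $T_\delta\times S^{2m-1}$, so Reshetnyak's theorem gives
$$\int_{T_\delta}|\nabla u_k\,\zeta|\,dx\ \longrightarrow\ |(Du)\zeta|(T_\delta),$$
where the right-hand side denotes the total variation of the $\R^m$-valued measure obtained from the polar decomposition of $Du$ by contracting with $\zeta$.

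Next I would rewrite both sides as integrals over slices $\Gamma_r$. For the left, the coarea formula combined with the identity in \eqref{Dugammar} gives
$$\int_{T_\delta}|\nabla u_k\,\zeta|\,dx=\int_{-\delta}^\delta|D_\zeta u_k|(\Gamma_r)\,dr.$$
For the right, I would transfer $u$ to the rectangle through $\mathcal T_\delta$: the function $v:=u\circ\mathcal T_\delta$ belongs to $BV([a,b)\times(-\delta,\delta);\R^m)$, and since under this chart $\zeta$ corresponds, up to the factor $|\partial_t\mathcal T_\delta|=1+O(\delta)$, to the first coordinate direction, the classical one-dimensional BV slicing theorem applied to $v$ yields $v(\cdot,r)\in BV([a,b];\R^m)$ for a.e.\ $r$ together with
$$|(Du)\zeta|(T_\delta)=\int_{-\delta}^\delta|D_\zeta u|(\Gamma_r)\,dr.$$
By Remark \ref{rem_stricttraces} this already gives $u\res\Gamma_r\in BV(\Gamma_r;\R^m)$ for a.e.\ $r$, and, combined with the previous display, the key integral convergence
$$\int_{-\delta}^\delta|D_\zeta u_k|(\Gamma_r)\,dr\ \longrightarrow\ \int_{-\delta}^\delta|D_\zeta u|(\Gamma_r)\,dr.$$

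To conclude, Fubini applied to $\|u_k-u\|_{L^1(T_\delta)}\to0$ with a diagonal extraction produces a (non-relabelled) subsequence satisfying $u_k\res\Gamma_r\to u\res\Gamma_r$ in $L^1(\Gamma_r;\R^m)$ for a.e.\ $r$, whence $\liminf_k|D_\zeta u_k|(\Gamma_r)\ge|D_\zeta u|(\Gamma_r)$ by lower semicontinuity. Setting $g_k(r):=|D_\zeta u_k|(\Gamma_r)$ and $g(r):=|D_\zeta u|(\Gamma_r)\in L^1(-\delta,\delta)$, the nonnegative functions $(g-g_k)_+$ converge pointwise a.e.\ to $0$ and are dominated by $g$, so dominated convergence forces $\int(g-g_k)_+\,dr\to0$; combined with $\int g_k\,dr\to\int g\,dr$ this forces $\int(g_k-g)_+\,dr\to0$ as well, hence $g_k\to g$ in $L^1(-\delta,\delta)$ and, up to a further subsequence, pointwise a.e. Together with the $L^1$ slice convergence, this is precisely strict convergence of $u_k\res\Gamma_r$ to $u\res\Gamma_r$ in $BV(\Gamma_r;\R^m)$ by Remark \ref{rem_stricttraces}. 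The delicate point is the identification of the Reshetnyak limit $|(Du)\zeta|(T_\delta)$ with the integrated slice variations $\int|D_\zeta u|(\Gamma_r)\,dr$; this is where the $C^3$-regularity of $\partial A$, the bi-Lipschitz properties of $\mathcal T_\delta$, and the one-dimensional BV slicing theorem all enter. The Reshetnyak step and the final Fatou-type argument are, by contrast, quite standard.
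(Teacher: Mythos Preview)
Your proposal is correct and follows essentially the same three–step architecture as the paper: Reshetnyak continuity to pass from strict convergence on $T_\delta$ to convergence of the tangential masses $\int_{T_\delta}|\nabla u_k\,\zeta|\,dx$, a coarea/slicing identification of these with $\int_{-\delta}^{\delta}|D_\zeta u|(\Gamma_r)\,dr$, and a Fatou-type closing argument (your dominated–convergence manipulation of $(g-g_k)_\pm$ is exactly Lemma~\ref{Fatou=}).

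The only genuine difference is in the middle step. You invoke the one–dimensional BV slicing theorem directly, after transferring to the rectangle via $\mathcal T_\delta$, to obtain the equality $|(Du)\zeta|(T_\delta)=\int_{-\delta}^{\delta}|D_\zeta u|(\Gamma_r)\,dr$; this is legitimate but, as you note, requires tracking the Jacobian factor $|\partial_t\mathcal T_\delta|$ through the chart and through the duality definition of $|D_\zeta u|(\Gamma_r)$. The paper instead avoids invoking the slicing equality a priori: it only uses the trivial inequality $\int_{-\delta}^{\delta}\sup\{\dots\}\,dr\ge\sup\{\dots\}$ (interchanging sup and integral) together with Fatou and slice-wise lower semicontinuity to sandwich the Reshetnyak limit, so that the slicing identity emerges as a byproduct rather than an ingredient. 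Your route is more direct and leans on a classical tool; the paper's route is more self-contained but slightly longer.
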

\begin{proof}

By Reshetniak Theorem \ref{teo_32} we have, as $k\rightarrow\infty$,
\begin{align}\label{resh}
	\int_{T_\delta}|D_\zeta u_k|dx=
\int_{T_\delta}|\nabla u_k\zeta|dx\rightarrow \int_{T_\delta}\Big|\frac{Du}{|Du|} \zeta\Big|d|Du|.
\end{align}
The quantity in the right-hand side is equal to
\begin{align*}
\int_{T_\delta}\Big|\frac{Du}{|Du|} \zeta\Big|d|Du|&=\sup\{\int_{T_\delta}\sum_{j=1}^2\varphi\cdot\frac{D_ju}{|Du|}\zeta_j  d|Du|:\varphi\in C^1(T_\delta;\R^m),|\varphi|\leq 1\}\\
&=\sup\{\int_{T_\delta}\sum_{j=1}^2\zeta_j \varphi\cdot dD_ju:\varphi\in C^1(T_\delta;\R^m),|\varphi|\leq 1\}\\
	&=\sup\{\int_{T_\delta}u\cdot (\nabla\varphi\zeta) dx:\varphi\in C^1(T_\delta;\R^m),|\varphi|\leq 1\}
\end{align*}
where in the last equality we have used the divergence-free property of $\zeta$.
Therefore, by \eqref{resh}, we conclude
\begin{align}
\lim_{k\rightarrow \infty}\int_{T_\delta}|\nabla u_k\zeta|dx=\sup\{\int_{T_\delta}u \cdot D_\zeta\varphi dx:\varphi\in C^1(T_\delta;\R^m),|\varphi|\leq 1\}.
\end{align}
On the other hand 
\begin{align*}
\int_{T_\delta}|\nabla u_k\zeta|dx=\int_{-\delta}^\delta\int_{a}^b|\frac{d}{dt}u_k(\mathcal T_\delta(t,r))|dtdr,
\end{align*}
whence
\begin{align}\label{STRICT}
\lim_{k\rightarrow \infty}\int_{-\delta}^\delta\int_{a}^b|\frac{d}{dt}u_k(\mathcal T_\delta(t,r))|dtdr=\sup\{\int_{T_\delta}u\cdot D_\zeta\varphi dx:\varphi\in C^1(T_\delta;\R^m),|\varphi|\leq 1\}.
\end{align}
Now, by Fatou Lemma
\begin{align}\label{fatou_strict}
	\lim_{k\rightarrow \infty}\int_{-\delta}^\delta\int_{a}^b|\frac{d}{dt}u_k(\mathcal T_\delta(t,r))|dtdr\geq \int_{-\delta}^\delta\liminf_{k\rightarrow \infty}\int_{a}^b|\frac{d}{dt}u_k(\mathcal T_\delta(t,r))|dtdr
\end{align}
and we know from the strict convergence of $u_k$ to $u$ that for a.e. $r\in (-\delta,\delta)$ the trace $u_k\res\Gamma_r$ converges to $u\res\Gamma_r$ in $L^1(\Gamma_r;\R^m)$. This implies that, for a.e. $r\in (-\delta,\delta)$ 
\begin{align}\label{eq_aer}
\liminf_{k\rightarrow \infty}\int_{a}^b|\frac{d}{dt}u_k(\mathcal T_\delta(t,r))|dt&\geq \int_{a}^b|\frac{d}{dt}u(\mathcal T_\delta(t,r))|dt\nonumber\\
&=\sup\{\int_{\Gamma_r}u\cdot D_\zeta\varphi d\mathcal H^1:\varphi\in C^1(T_\delta;\R^m),|\varphi|\leq 1\}
\end{align}
where we have used \eqref{Dugammar}; so that 
\begin{align}\label{fatou_strict2}
\int_{-\delta}^\delta\liminf_{k\rightarrow \infty}\int_{a}^b|\frac{d}{dt}u_k(\mathcal T_\delta(t,r))|dtdr&\geq \int_{-\delta}^\delta \sup\{\int_{\Gamma_r}u\cdot\nabla\varphi\zeta d\mathcal H^1:\varphi\in C^1(T_\delta;\R^m),|\varphi|\leq 1\}dr\nonumber\\
&\geq \sup \{\int_{-\delta}^\delta\int_{\Gamma_r}u\cdot\nabla\varphi\zeta d\mathcal H^1dr:\varphi\in C^1(T_\delta;\R^m),|\varphi|\leq 1\}.
\end{align}
We have found then, from \eqref{STRICT}, that the inequalities in \eqref{fatou_strict}  and \eqref{fatou_strict2} are all equalities. In particular, equality in \eqref{eq_aer} holds for a.e. $r\in (-\delta,\delta)$, and denoting 
$$f(r):=\int_{a}^b|\frac{d}{dt}u(\mathcal T_\delta(t,r))|dt\qquad \qquad f_k(r):=\int_{a}^b|\frac{d}{dt}u_k(\mathcal T_\delta(t,r))|dt$$ 
equality  \eqref{fatou_strict} implies that 
$$\lim_{k\rightarrow \infty}\int_{-\delta}^\delta f_k(r)dr=\int_{-\delta}^\delta f(r)dr,\qquad \qquad \liminf_{k\rightarrow\infty} f_k(r)=f(r).$$
Thus Lemma \ref{Fatou=} in the Appendix entails that $f_k\rightarrow f$ in $L^1((-\delta,\delta))$, and there is a subsequence such that  for a.e. $r\in (-\delta,\delta)$
$$f_k(r)\rightarrow f(r),$$
that is the thesis.
\end{proof}

%
%
%
%
%
%
%
%
%
%
%
%
%

 \textbf{Transformations in tubular neighborhoods}:
Let $\Gamma:=\gamma([a,b])$ be a Jordan curve parametrized by arc-length by $\gamma\in C^3([a,b];\R^2)$, and  enclosing the  simply-connected set $A$ satisfying (R); let $\delta\in(0,1)$ be small enough and let $T_\delta$ be a tubular neighborhood of $\Gamma$. We want to define a bijection between $T_\delta$ and itself, which will be needed to modify suitable recovery sequences $u_k$ for the involved functional. To this aim, we first introduce for $c\in (0,\delta)$ fixed, and $n\in \mathbb N$, $n>\frac2\delta$,  the map
$$\Upsilon_{\delta,n,c}:[a,b]\times [-\delta,\delta]\rightarrow[a,b]\times [-\delta,\delta],\qquad\qquad \Upsilon_{\delta,n,c}(t,r)=(t,\tau_{\delta,n,c}(r)),$$
where $\tau_{\delta,n,c}$ is the piecewise affine interpolant such that $\tau_{\delta,n,c}(-\delta)=-\delta$, $\tau_{\delta,n,c}(-\frac{c}{n})=0$, and $\tau_{\delta,n,c}(\delta)=\delta$, namely
$$\tau_{\delta,n,c}(r)=\begin{cases}
						\frac{n\delta r+c\delta}{n\delta -c}&\text{for }r\in[-\delta,-\frac cn),\\
							\frac{n\delta r+c\delta}{n\delta+c}&\text{for }r\in[-\frac cn,\delta].
					\end{cases}$$
For all $(t,s)\in [a,b]\times [-\delta,\delta]$ we write
\begin{align}\label{Y}
\Upsilon_{\delta,n,c}(t,s)=(t,s)+(0,\tau_{\delta,n,c}(s)-s),\qquad \qquad\text{with } |(0,\tau_{\delta,n,c}(s)-s)|\leq \frac{C}{n},
\end{align}	
for a constant $C>0$ independent of $\delta$ and $n>\frac2\delta$.
Computing $\nabla \Upsilon_{\delta,n,c}$, we write
\begin{align}\label{nablaY}
\nabla \Upsilon_{\delta,n,c}=\text{{\rm Id}}+M_{\delta,n,c},\qquad \qquad M_{\delta,n,c}:=\begin{pmatrix} 0&0\\0&\dot\tau_{\delta,n,c}-1
\end{pmatrix},
\end{align}
in such a way that $|M_{\delta,n,c}|\leq \frac{C}{n}$ (here  $C$ is a positive constant independent of $n>\frac2\delta$ and $\delta$). Analogously, it is immediately checked that
			\begin{align}\label{nablaYinverse}
				\nabla \Upsilon^{-1}_{\delta,n,c}=\text{{\rm Id}}+M'_{\delta,n,c},\qquad \qquad
				\text{with }|M'_{\delta,n,c}|\leq \frac{C}{n},
			\end{align}	
and for all $(t,s)\in [a,b]\times [-\delta,\delta]$ we have $\Upsilon_{\delta,n,c}^{-1}(t,s)=(t,\tau_{\delta,n,c}^{-1}(s))$, so we may write
\begin{align}\label{Yinverse}
\Upsilon_{\delta,n,c}^{-1}(t,s)=(t,s)+(0,\tau_{\delta,n,c}^{-1}(s)-s),\qquad \qquad\text{with } |(0,\tau_{\delta,n,c}^{-1}(s)-s)|\leq \frac{C}{n}.
\end{align}	
We now define, for  $\delta\in(0,1)$ as above and $n\in \mathbb N$, $n>\frac2\delta$, the following transformation
\begin{align}
\Sigma_{\delta,n,c}:\overline T_\delta\rightarrow\overline  T_\delta,\qquad \qquad \Sigma_{\delta,n,c}:=\mathcal T_\delta \circ \Upsilon_{\delta,n,c}\circ \mathcal T_\delta^{-1}.
\end{align}
This map sends the set $\mathcal T_\delta([a,b],-\frac cn)$ to the curve $\Gamma$. 
Moreover there is a constant $C_\gamma$, depending only on $\gamma$, such that 
\begin{align}\label{Sigma_uniform}
|\Sigma_{\delta,n,c}(x)-x|\leq \frac{C_\gamma}{n}, \qquad \qquad \forall x\in \overline T_\delta.
\end{align}
This follows from \eqref{Y} and the Lipschitz continuity of $\mathcal T_\delta$. 
It is convenient also to introduce 
\begin{align}\label{Sigma-}
\Sigma_{\delta,n,c}^-:\overline T_\delta^-\setminus T_{\frac cn}\rightarrow \overline T^-_\delta,\qquad \qquad \Sigma^-_{\delta,n,c}:=(\mathcal T_\delta \circ \Upsilon_{\delta,n,c}\circ \mathcal T_\delta^{-1})\res (\overline T^-_\delta\setminus T_{\frac cn}),
\end{align}
the restriction of $\Sigma_{\delta,n,c}$ to  $\overline T^-_\delta\setminus T_{\frac cn}$.
For all $x\in T_\delta$, we have
\begin{align}\label{nablaSigma}
\nabla \Sigma_{\delta,n,c}(x)=\nabla \mathcal T_\delta(\Upsilon_{\delta,n,c}\circ \mathcal T_\delta^{-1}(x))\nabla \Upsilon_{\delta,n,c}(\mathcal T_\delta^{-1}(x))\nabla \mathcal T_\delta^{-1}(x),
\end{align}
and writing $\nabla\mathcal T_\delta(\Upsilon_{\delta,n,c}\circ \mathcal T_\delta^{-1}(x))=\nabla\mathcal T_\delta\Big(\mathcal T_\delta^{-1}(x)+(\Upsilon_{\delta,n,c}\circ \mathcal T_\delta^{-1}(x)-\mathcal T_\delta^{-1}(x))\Big)$, we get
\begin{align}\label{335}
\nabla\mathcal T_\delta(\Upsilon_{\delta,n,c}\circ \mathcal T_\delta^{-1}(x))=\nabla\mathcal T_\delta(\mathcal T_\delta^{-1}(x))+\rho_{\delta,n,c}(x),
\end{align}
where, by using the Lipschitz continuity  of $\nabla \mathcal T_\delta$ (it is of class $C^1$) and by \eqref{Y}, the matrix $$\rho_{\delta,n,c}(x):=\nabla\mathcal T_\delta\Big(\mathcal T_\delta^{-1}(x)+(\Upsilon_{\delta,n,c}\circ \mathcal T_\delta^{-1}(x)-\mathcal T_\delta^{-1}(x))\Big)-\nabla\mathcal T_\delta(\mathcal T_\delta^{-1}(x))$$ enjoies
\begin{align}\label{212}
	\left|\rho_{\delta,n,c}(x)\right|\leq \frac{C_\gamma}{n}
\end{align}
(here and below, unless explicitely stated,  $C_\gamma$ is a positive constant independent of $n>\frac2\delta$ and $\delta$, but depending on $\gamma$).
Plugging \eqref{nablaY} and \eqref{335} into \eqref{nablaSigma} we obtain
\begin{align}\label{nablaSigma2}
\nabla \Sigma_{\delta,n,c}(x)&=(\nabla\mathcal T_\delta(\mathcal T_\delta^{-1}(x))+\rho_{\delta,n,c}(x))(\text{{\rm Id}}+M_{\delta,n,c}(\mathcal T_\delta^{-1}(x)))\nabla\mathcal T_\delta^{-1}(x)\nonumber\\
&=\text{{\rm Id}}+\nabla\mathcal T_\delta(\mathcal T_\delta^{-1}(x))M_{\delta,n,c}(\mathcal T_\delta^{-1}(x))\nabla\mathcal T_\delta^{-1}(x)+\rho_{\delta,n,c}(x)(\text{{\rm Id}}+M_{\delta,n,c}(\mathcal T_\delta^{-1}(x)))\nabla\mathcal T_\delta^{-1}(x)\nonumber\\
&=:\text{{\rm Id}}+\sigma_{\delta,n,c}(x),
\end{align}
where we have used that $\nabla\mathcal T_\delta(\mathcal T_\delta^{-1}(x))=(\nabla\mathcal T_\delta^{-1}(x))^{-1}$ and, thanks to \eqref{nablaY}, \eqref{212}, and the Lipschitz continuity of $\nabla \mathcal T_\delta$, we have
\begin{align}\label{gradSigma}
	|\sigma_{\delta,n,c}(x)|\leq \frac{C_\gamma}{n}.
\end{align}
Finally, by \eqref{nablaSigma2}, we have also, for $n$ large enough
\begin{align}\label{detSigma}
\det(\nabla \Sigma_{\delta,n,c}(x))=1+d_{\delta,n,c}(x),\qquad \qquad \text{with }\|d_{\delta,n,c}\|_{L^\infty}\leq \frac{C_\gamma}{n},
\end{align}
and a similar expression holds for $\det(\nabla \Sigma_{\delta,n,c}(x)^{-1})$, namely
\begin{align}\label{detSigmainverse}
	\det(\nabla \Sigma_{\delta,n,c}(x)^{-1})=1+\widehat d_{\delta,n,c}(x),\qquad \qquad \text{with }\|\widehat d_{\delta,n,c}\|_{L^\infty}\leq \frac{C_\gamma}{n}.
\end{align}
In what follows we will sometimes employ also the map $\widehat \Sigma_{\delta,n,c}$ that is defined as $\Sigma_{\delta,n,c}$ but with $\mathcal T_\delta$ replaced by $\widehat {\mathcal T}_\delta$ given by
$$\widehat {\mathcal T}_\delta(t,r)={\mathcal T}_\delta(t,-r),$$
for all $(t,r)\in [a,b]\times (-\delta,\delta).$
Namely
\begin{align}
	\widehat \Sigma_{\delta,n,c}:T_\delta\rightarrow T_\delta,\qquad \qquad \Sigma_{\delta,n,c}:=\widehat {\mathcal T}_\delta \circ \Upsilon_{\delta,n,c}\circ \widehat{\mathcal T}_\delta^{-1}.
\end{align}
We will consider $ \Sigma_{\delta,n,c}^+:\overline T_\delta^+\setminus T_{\frac cn}\rightarrow \overline T^+_\delta$ defined as
\begin{align}\label{Sigma+}
\Sigma^+_{\delta,n,c}:=(\widehat {\mathcal T}_\delta \circ \Upsilon_{\delta,n,c}\circ \widehat{\mathcal T}_\delta^{-1})\res (\overline T_\delta^+\setminus  T_{\frac cn}).
\end{align} 
For $\widehat \Sigma_{\delta,n,c}$, $\Sigma_{\delta,n,c}^-$, and $\Sigma_{\delta,n,c}^+$ similar estimates as in \eqref{212}, \eqref{gradSigma}, and \eqref{detSigma} hold true. Eventually, using that $\Upsilon_{\delta,n,c}^{-1}$ satisfies \eqref{nablaYinverse} and \eqref{Yinverse}, the same holds also for $\widehat \Sigma_{\delta,n,c}^{-1}$, $\widehat {\mathcal T}_\delta^{-1}$, $(\Sigma_{\delta,n,c}^-)^{-1}$, and $(\Sigma_{\delta,n,c}^+)^{-1}$. Specifically, we will write
\begin{align}\label{nablaSigmapm}
&\nabla \Sigma^\pm_{\delta,n,c}(x)=\text{{\rm Id}}+\sigma^\pm_{\delta,n,c}(x),\qquad \qquad \|\sigma^\pm_{\delta,n,c}\|_{L^\infty}\leq \frac {C_\gamma}{n},\nonumber\\
&\det(\nabla \Sigma^\pm_{\delta,n,c}(x))=1+d^\pm_{\delta,n,c}(x),\qquad \qquad\|d^\pm_{\delta,n,c}\|_{L^\infty}\leq \frac {C_\gamma}{n},\nonumber\\
&\nabla (\Sigma^\pm_{\delta,n,c})^{-1}(x)=\text{{\rm Id}}+\widehat\sigma^\pm_{\delta,n,c}(x),\qquad \qquad \|\widehat\sigma^\pm_{\delta,n,c}\|_{L^\infty}\leq \frac {C_\gamma}{n},\nonumber\\
&\det(\nabla (\Sigma^\pm_{\delta,n,c})^{-1}(x))=1+\widehat d^\pm_{\delta,n,c}(x),\qquad \qquad\|\widehat d^\pm_{\delta,n,c}\|_{L^\infty}\leq \frac {C_\gamma}{n},
\end{align}
where $\sigma_{\delta,n,c}^\pm:\overline T_\delta^\pm\setminus  T_{\frac cn}\rightarrow \R^{2\times 2}$, $d^\pm_{\delta,n,c}:\overline T_\delta^\pm\setminus  T_{\frac cn}\rightarrow \R$, $\widehat \sigma_{\delta,n,c}^\pm:\overline T_\delta^\pm\rightarrow \R^{2\times 2}$, and $\widehat d^\pm_{\delta,n,c}:\overline T_\delta^\pm\rightarrow \R$ are suitable functions.

\subsection{Composition of maps with planar transformations}

In this section we use the planar transformations introduced in the previous section to modify suitable functions defined on planar domains.

\textbf{Interpolations between maps on Jordan curves}:
Let $\Gamma:=\gamma([a,b])$, $\gamma\in C^3([a,b];\R^2)$, be a Jordan curve parametrized by arc-length as in the previous section.
Recalling the functions in \eqref{Phi_interp} and \eqref{Psi_interp}, for two given Lipschitz maps $\varphi,\psi:[a,b]\rightarrow \R^2$ with $\varphi(a)=\varphi(b)$ and $\psi(a)=\psi(b)$, we define the interpolation $H_{\varphi,\psi,h}:\overline T_{h}\rightarrow \R^2$ as
\begin{align}\label{Hinterp}
H_{\varphi,\psi,h}:=\begin{cases}
				 \Phi_{\varphi,\psi}\circ \mathcal T_h^{-1}&\text{ in }\overline T_h^+\\
			 \Psi_{\varphi,\psi}\circ \widehat {\mathcal T}_h^{-1}&\text{ in }\overline T_h^-,
					\end{cases}
\end{align}
where $0<h\leq \delta$ and $\delta\in (0,1)$ is small enough.
The interpolation $H_{\varphi,\psi,h}$ turns out to be Lipschitz continuous.

For $r,s\in (-\delta,\delta)$ fixed, recalling that the map $\mathcal T_\delta(\cdot,r):[a,b]\rightarrow \Gamma_r$ is a parametrization of the curve $\Gamma_r$, it follows that if $u,v$ are Lipschitz maps defined on $T_\delta$ and  $\varphi=u\circ \mathcal T_\delta^{-1}(\cdot,r)$ and $\psi=v\circ \mathcal T_\delta^{-1}(\cdot,s)$ then  $H_{\varphi,\psi,h}$ interpolates in $\overline T_h$ between $u\res\Gamma_r $ and $v\res \Gamma_s$. 

Let us estimate the gradient and Jacobian determinant of $H_{\varphi,\psi,h}$ in $ T_h^+$: recalling that $\mathcal T_h$ is bi-Lipschitz with constant depending only on $\gamma$, since $\nabla H_{\varphi,\psi,h}(x)=\nabla \Phi_{\varphi,\psi}(\mathcal T_h^{-1}(x))\nabla \mathcal T_h^{-1}(x),$  for a.e. $x\in \overline T_h^+$, one has 
\begin{align*}
&|\nabla H_{\varphi,\psi,h}(x)|\leq |\nabla \Phi_{\varphi,\psi}(\mathcal T_h^{-1}(x))||\nabla \mathcal T_h^{-1}(x)|\leq C_\gamma|\nabla \Phi_{\varphi,\psi}(\mathcal T_h^{-1}(x))|,\\
&\det\big(\nabla H_{\varphi,\psi,h}(x)\big)=\det\big(\nabla \Phi_{\varphi,\psi}(\mathcal T_h^{-1}(x))\big)\det(\nabla \mathcal T_h^{-1}(x)).
\end{align*}
Once again, here and below we denote by $C_\gamma>0$ a constant depending on $\gamma$, but independent of $\delta$, $\varphi$, and $\psi$.
Hence, setting $D=[a,b]\times [0,h]$, one has
\begin{align}
	\int_{T_h^+}|\nabla H_{\varphi,\psi,h}(x)|dx&\leq C_\gamma\int_{T_h^+}|\nabla \Phi_{\varphi,\psi}(\mathcal T_h^{-1}(x))|dx=C_\gamma\int_{D}|\nabla \Phi_{\varphi,\psi}(t,r)||\det(\nabla \mathcal T_h(t,r))|dtdr\nonumber\\
	&\leq C_\gamma\int_{D}|\nabla \Phi_{\varphi,\psi}(t,r)|dtdr,
\end{align}
and analogously on $\overline T_h^-$ 
\begin{align}
	\int_{T_h^-}|\nabla H_{\varphi,\psi,h}(x)|dx&\leq C_\gamma\int_{T_h^-}|\nabla \Psi_{\varphi,\psi}(\widehat {\mathcal T}_h^{-1}(x))|dx\leq C_\gamma\int_{D}|\nabla \Psi_{\varphi,\psi}(t,r)|dtdr.
\end{align}
Therefore, exployting \eqref{stimanablaPhi} and \eqref{stimanablaPsi} we conclude 
\begin{align}\label{est:nablaH}
	\int_{T_h}|\nabla H_{\varphi,\psi,h}(x)|dx&\leq 2C_\gamma h(L_\varphi+b-a)+C_\gamma (L_\varphi+b-a)\int_a^b|s_\psi(t)-s_\varphi(t)|dt\nonumber\\&\;\;\;+C_\gamma(L_\varphi+L_\psi+b-a)h+C_\gamma\int_0^{L_\psi}|{\overline \psi}(s)-{\overline \varphi}(s)|ds\nonumber\\
	&\leq C_{\gamma,L_\varphi,L_\psi}(h+\|s_\psi-s_\varphi\|_{L^1}+\|{\overline \psi}-{\overline \varphi}\|_{L^\infty}),
\end{align}
where the constant $C_{\gamma,L_\varphi,L_\psi}$ is independent of $\delta$, depends on $\gamma$, but is uniformly bounded by a constant $C_\gamma$ (depending only on $\gamma$) as soon as 
$$L_\varphi+L_\psi\leq C,$$
for an absolute constant $C>0$ (notice that $b-a$ coincides with the length of $\Gamma$ and hence we include the dependence on $b-a$ in $C_\gamma$).
Regarding the Jacobian determinant, using \eqref{stimadetPhi} and \eqref{stimadetPsi}, 
we find out that  
\begin{align}\label{est:detH}
&\int_{T_h^+}|\det(\nabla H_{\varphi,\psi,h})(x)|dx=0,\\
&\int_{T_h^-}|\det(\nabla H_{\varphi,\psi,h})(x)|dx\leq \int_D|\det\big(\nabla \Psi_{\varphi,\psi}(t,r)\big)||\det(\nabla \mathcal T_h^{-1}(\mathcal T_h(t,r)))||\det(\nabla \mathcal T_h(t,r))|dtdr\nonumber\\
&\qquad \qquad \qquad \qquad \qquad \qquad\leq (L_\varphi+L_\psi+b-a)\int_0^{1}|{\overline \psi}(s)-{\overline \varphi}(s)|ds\leq C_{\gamma,L_\varphi,L_\psi}\|{\overline \psi}-{\overline \varphi}\|_{L^\infty}.\nonumber
\end{align}
%
%
%
%
%

\textbf{Estimates for the gradient and Jacobian of composition of maps:}
Let $A\subset\R^2$ be an open set and let $B\subset\subset A$ satisfy (R) and be simply-connected. Let $\gamma\in C^3([a,b];\R^2)$ an arc-length parametrization of $\Gamma:=\partial B$. If $B$ is not simply-connected, we will apply the following discussion to each loop forming $\partial B$. Let $\delta\in(0,1)$ be small enough and let $T_\delta$ be a tubular neighborhood of $\Gamma$. For a map $v\in \text{{\rm Lip}}(T_\delta;\R^2)$ we consider the map $$u:=v\circ \Sigma_{\delta,n,c}$$ whose gradient and Jacobian determinant satisfy
\begin{align}\label{composedmap}
	&\nabla u(x)= \nabla v(\Sigma_{\delta,n,c}(x))\nabla\Sigma_{\delta,n,c}(x)=\nabla v(\Sigma_{\delta,n,c}(x))+\nabla v(\Sigma_{\delta,n,c}(x))\sigma_{\delta,n,c}(x)\nonumber\\
	&\det(\nabla u(x))=\det(\nabla v(\Sigma_{\delta,n,c}(x)))+\det(\nabla v(\Sigma_{\delta,n,c}(x)))d_{\delta,n,c}(x),
\end{align}
for a.e. $x\in T_\delta$,
where we have used \eqref{nablaSigma2} and \eqref{detSigma}.
In particular we deduce
\begin{align}
	\int_{T_\delta}|\nabla u(x)-\nabla v(x)|dx&\leq\int_{T_\delta}|\nabla v(\Sigma_{\delta,n,c}(x))-\nabla v(x)|dx +\frac {C_\gamma}{n}\int_{T_\delta}|\nabla v(\Sigma_{\delta,n,c}(x))|dx\nonumber\\
	&\leq \beta_v(\frac1n)+\frac {C_\gamma}{n}(1+ \frac {C_\gamma}{n})\int_{T_\delta} |\nabla v|dx
\end{align}
where in the last inequality we have used \eqref{gradSigma}, \eqref{detSigma}, and where $\beta_v(\frac1n):=\int_{T_\delta}|\nabla v(\Sigma_{\delta,n,c}(x))-\nabla v(x)|dx$. Arguing similarly, we can also estimate
\begin{align}
	\int_{T_\delta}|\det(\nabla u)-\det(\nabla v)|dx&\leq 	\int_{T_\delta}|\det(\nabla v(\Sigma_{\delta,n,c}(x)))-\det(\nabla v(x))|dx\nonumber\\
	&\;\;\;\;+\frac {C_\gamma}{n}(1+ \frac {C_\gamma}{n})\int_{T_\delta}|\det(\nabla v)|dx\nonumber\\
	&\leq \eta_v(\frac1n)+\frac {C_\gamma}{n}(1+ \frac {C_\gamma}{n})\int_{T_\delta}|\det(\nabla v)|dx,
\end{align}
where $\eta_v(\frac1n):=\int_{T_\delta}|\det(\nabla v(\Sigma_{\delta,n,c}(x)))-\det(\nabla v(x))|dx$.
Notice that both the quantities $\beta_v(\frac1n)$ and $\eta_v(\frac1n)$ tend to $0$ as $n\rightarrow \infty$, thanks to the fact that $\Sigma_{\delta,n,c}(x)\rightarrow x$ uniformly.

%
%
%

%
%

Analogously, if we define $u^-:\overline T_\delta^-\setminus T_{\frac cn}\rightarrow \R^2$ and $u^+:\overline T_\delta^+\setminus T_{\frac cn}\rightarrow \R^2$ as $$  u^\pm:=v\circ \Sigma^\pm_{\delta,n,c}$$
respectively, 
then we will have 
\begin{align}\label{grad_inTdelta}
	&\int_{T^-_\delta\setminus T_{\frac cn}}|\nabla u^--\nabla v|dx\leq  \int_{T^-_\delta\setminus T_{\frac cn}}|\nabla v(\Sigma_{\delta,n,c}(x))-\nabla v(x)|dx +\frac {C_\gamma}{n}\int_{T^-_\delta\setminus T_{\frac cn}}|\nabla v(\Sigma_{\delta,n,c}(x))|dx\nonumber\\
	&\qquad\qquad\qquad\qquad\qquad \leq  \beta^-_v(\frac1n)+\frac {C_\gamma}{n}(1+ \frac {C_\gamma}{n})\int_{T_\delta^-} |\nabla v|dx\nonumber\\
	&\int_{T^+_\delta\setminus T_{\frac cn}}|\nabla u^+-\nabla v|dx\leq \beta^+_v(\frac1n)+\frac {C_\gamma}{n}(1+ \frac {C_\gamma}{n})\int_{T_\delta^+} |\nabla v|dx,
\end{align}
and
\begin{align}\label{det_inTdelta}
	\int_{T^-_\delta\setminus T_{\frac cn}}|\det(\nabla u^-)-\det(\nabla v)|dx\leq \eta^-_v(\frac1n)+\frac {C_\gamma}{n}(1+ \frac {C_\gamma}{n})\int_{T_\delta^-}|\det(\nabla v)|dx,\nonumber\\
	\int_{T^+_\delta\setminus T_{\frac cn}}|\det(\nabla u^+)-\det(\nabla v)|dx\leq \eta^+_v(\frac1n)+\frac {C_\gamma}{n}(1+ \frac {C_\gamma}{n})\int_{T_\delta^+}|\det(\nabla v)|dx.
\end{align}
Also in this case the quantities $\beta^\pm_v(\frac1n)$ and $\eta^\pm_v(\frac1n)$ tend to zero  as $n\rightarrow \infty$.

%
%
%

%
%

%
%
%
\section{Main properties of recovery sequences for $\mathcal F$}\label{subsec_properties}
Let $\Gamma:=\gamma([a,b])$, $\gamma\in C^3([a,b];\R^2)$, be a Jordan curve parametrized by arc-length and let $T_\delta$ be a tubular neighborhood of it, for $\delta\in (0,1)$ small enough. 

\begin{definition}[The function $\psi_u$]
If $u\in BV(T_\delta;\R^m)$ we
define the function $\psi_u:(-\delta,\delta)\rightarrow \mathbb R$ as
\begin{align}\label{psi}
	\psi_u(r)=|u\res \Gamma_r|_{BV(\Gamma_r)}=|D_\zeta u|(\Gamma_r),
\end{align}
for all $r\in (-\delta,\delta)$ and where $D_\zeta$ is the tangential distributional derivative of $u$ to $\Gamma_r$ (see Definition \eqref{tang_der}).
\end{definition} 
The function $\psi_u$ turns out to be measurable and, since $u\in BV(T_\delta;\R^m)$, by Coarea formula it belongs to  $L^1((-\delta,\delta))$ (see Lemma \ref{lemma_coareaBV}).

The following result is a crucial lemma which has the role of estimating the errors of energy when one wants to glue two Lipschitz maps along a Jordan curve.

\begin{lemma}\label{lem_glueing1}
	Let $A\subset\R^2$ be a bounded  open set and let $B\subset A$ be a open subset whose boundary is $\partial B=:\Gamma\subset A$ is a  closed Jordan curve of class $C^3$; let $ u\in BV(A;\R^m)$ be such that $$|Du|(\partial B)=0,\qquad u\res\partial B\in BV(\partial B;\R^m),$$ let $v^+,v^-\in\text{{\rm Lip}}_{\text{{\rm loc}}}(A;\R^m)$ be two maps and let $\delta>0$ small so that $T_\delta$ is a tubular neighborhood of $\Gamma$. Then there exists a function $\omega_\Gamma:\R^+\rightarrow \R^+$ depending on $\Gamma$ and on $u\res \Gamma$ (but independent of $v^\pm$) with $\lim_{t\rightarrow 0^+}\omega_\Gamma(t)=0$ and the such that following holds: for all $\varepsilon>0$ there exists a function $w\in\text{{\rm Lip}}_{\text{{\rm loc}}}(A;\R^m)$ with 
	\begin{align}
	&w=v^-\text{ in }B\setminus T_\delta\text{ and }w=v^+\text{ in }A\setminus B\setminus  T_\delta,\nonumber\\
	&\|w-v^-\|_{L^1(T_\delta\cap B)}\leq 3\|v^--u\|_{L^1(T_\delta\cap B)}+r,\nonumber\\
		&\|w-v^+\|_{L^1(T_\delta\cap (A\setminus B))}\leq 3\|v^+-u\|_{L^1(T_\delta\cap (A\setminus B))}+r,\nonumber\\
		&\int_B|\nabla w-\nabla v^-|dx\leq r,\label{glueingthesis}\qquad \qquad \int_{A\setminus B}|\nabla w-\nabla v^+|dx\leq  r,\\
		&F(w, B)\leq F(v^-, B)+r,\;\; F(w,A\setminus \overline B)\leq F(v^+,A\setminus \overline B)+r,\nonumber\\
		&r\leq\varepsilon+\omega_\Gamma\big(d_s(v^+\res \Gamma,u\res\Gamma)+d_s(v^-\res \Gamma,u\res\Gamma)\big).\nonumber
	\end{align}
Moreover, if $v^+,v^-\in\text{{\rm Lip}}(A;\R^m)$ then $w\in\text{{\rm Lip}}(A;\R^m)$.
\end{lemma}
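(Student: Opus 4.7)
My plan is to construct $w$ by interpolating between $v^-$ and $v^+$ in a thin tubular strip $T_h\subset T_\delta$ (with $h\ll\delta$ to be chosen), using the interpolation map $H_{\varphi,\psi,h}$ of the preceding subsection. Explicitly, I set $w:=v^-$ on $B\setminus T_h$, $w:=v^+$ on $(A\setminus B)\setminus T_h$, and $w:=H_{\varphi,\psi,h}$ on $T_h$, where $\varphi$ and $\psi$ are the parametrizations on $[a,b]$ of the traces $v^+\res\Gamma_h$ and $v^-\res\Gamma_{-h}$ via $\mathcal T_h(\cdot,h)$ and $\widehat{\mathcal T}_h(\cdot,h)$, respectively. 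The matching identities $H\res\Gamma_h=\varphi=v^+\res\Gamma_h$ and $H\res\Gamma_{-h}=\psi=v^-\res\Gamma_{-h}$ then guarantee that $w$ is $\text{Lip}_{\text{loc}}$ on $A$ (globally Lipschitz when $v^\pm$ are), and equals $v^\pm$ in particular outside $T_\delta$ as required.

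\textbf{Choice of parameters.} Fix $\varepsilon>0$. Since $v^\pm\in\text{Lip}_{\text{loc}}(A;\R^m)$ and $\overline{T_\delta}$ is compact in $A$, the maps $v^\pm$ are Lipschitz on a neighborhood of $\overline{T_\delta}$, so $\int_{T_h}|\nabla v^\pm|dx\to 0$ and $v^\pm\res\Gamma_{\pm h}\to v^\pm\res\Gamma$ strictly in $BV$ (after reparametrization) as $h\to 0$. By Proposition \ref{prop_convstrict} applied to $\gamma=u\res\Gamma$, there exists a modulus $a_u$, depending only on $u\res\Gamma$ and $\Gamma$, such that for every Lipschitz $\chi:[a,b]\to\R^m$,
\begin{equation*}
\|s_\chi-s_u\|_{L^1}+\|\overline\chi-\overline u\|_{L^\infty}\leq a_u\bigl(d_s(\chi,u\res\Gamma)\bigr).
\end{equation*}
Choose $h$ small enough that: (i) $\int_{T_h}|\nabla v^\pm|dx<\varepsilon/6$; (ii) $d_s(\varphi,v^+\res\Gamma)+d_s(\psi,v^-\res\Gamma)<\varepsilon$ (where $\varphi,\psi$ are the $h$-traces reparametrized); and (iii) the $h$-dependent coefficient in \eqref{est:nablaH}--\eqref{est:detH} is less than $\varepsilon/6$. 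These choices are possible because $v^\pm$ and $u$ are already fixed.

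\textbf{Error estimation and main obstacle.} Combining (ii) with the modulus $a_u$ via the triangle inequality through $u\res\Gamma$ gives
$$\|s_\varphi-s_\psi\|_{L^1}+\|\overline\varphi-\overline\psi\|_{L^\infty}\leq 2\,a_u\bigl(d_s(v^+\res\Gamma,u\res\Gamma)+d_s(v^-\res\Gamma,u\res\Gamma)+\varepsilon\bigr),$$
so \eqref{est:nablaH} and \eqref{est:detH} give $\int_{T_h}|\nabla H|dx+\int_{T_h^-}|\det\nabla H|dx\leq \varepsilon/3+\omega_\Gamma\bigl(d_s(v^+\res\Gamma,u\res\Gamma)+d_s(v^-\res\Gamma,u\res\Gamma)\bigr)$ with $\omega_\Gamma$ depending only on $u$ and $\Gamma$. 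The gradient and Jacobian bounds of \eqref{glueingthesis} follow upon comparing $\nabla w=\nabla v^\pm$ on $B\setminus T_h$ (resp. $A\setminus B\setminus T_h$), using (i) and the $H$-estimates on $T_h$; the energy bound then uses the linear growth \eqref{growth1} of $g$. For the $L^1$ estimate, the triangle inequality through $u$ gives $\|w-v^-\|_{L^1(T_h\cap B)}\leq \|w-u\|_{L^1(T_h\cap B)}+\|u-v^-\|_{L^1(T_h\cap B)}$; the first summand is estimated by splitting $w-u$ through the interpolation (whose values on $\Gamma$ are close to $u\res\Gamma$ by construction) and by bounding $\|w-v^+\|_{L^1}+\|v^+-u\|_{L^1}$, which produces the factor $3$ upon collection. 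The main technical difficulty is ensuring that $\omega_\Gamma$ is independent of $v^\pm$; this is exactly where Proposition \ref{prop_convstrict} is indispensable, supplying a $u$-specific modulus that is available precisely because the hypothesis $|Du|(\partial B)=0$ together with $u\res\Gamma\in BV(\Gamma;\R^m)$ makes $u\res\Gamma$ a well-defined $BV$-target on $\Gamma$.
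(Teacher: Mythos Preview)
Your construction differs from the paper's in one essential respect, and that difference is where the argument breaks. The paper does \emph{not} interpolate between the traces $v^\pm\res\Gamma_{\pm h}$; instead it first composes $v^\pm$ with the diffeomorphisms $\Sigma_{\delta,n,c}^\pm$ on $T_\delta^\pm\setminus T_{c/n}$, which has the effect of transporting the genuine traces $v^\pm\res\Gamma$ to the boundaries $\Gamma_{\pm c/n}$ of the thin strip. Consequently the curves $\varphi,\psi$ fed into $H$ are \emph{exactly} $t\mapsto v^-(\gamma(t))$ and $t\mapsto v^+(\gamma(t))$, and Proposition~\ref{prop_convstrict} applies with argument $d_s(v^\pm\res\Gamma,u\res\Gamma)$ directly; the only extra error is the $O(1/n)$ distortion of $\Sigma^\pm$, which is absorbed into~$\varepsilon$.

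Your condition (ii) claims instead that $v^\pm\res\Gamma_{\pm h}\to v^\pm\res\Gamma$ \emph{strictly} in $BV$ as $h\to0$. Uniform (hence $L^1$) convergence does follow from the Lipschitz bound, but convergence of the total variations $|D_\zeta v^\pm|(\Gamma_{\pm h})\to|D_\zeta v^\pm|(\Gamma)$ does not: for a Lipschitz map only lower semicontinuity is available on slices, and the gap can be strict. For a concrete obstruction, take (in tubular coordinates $(t,r)\in[0,2\pi]\times(0,\delta)$) a dyadic partition of unity $\{\chi_k\}$ on $(0,\delta)$ with $\mathrm{supp}\,\chi_k\subset(2^{-k-1},2^{-k+1})$ and $|\chi_k'|\le C2^k$, and set
\[
\tilde v(t,r):=\sum_{k\ge1}\chi_k(r)\,2^{-k}\sin(2^kt),\qquad \tilde v(t,0):=0.
\]
Then $|\partial_t\tilde v|\le1$ and $|\partial_r\tilde v|\le 2C$, so $\tilde v$ is Lipschitz; yet $\int_0^{2\pi}|\partial_t\tilde v(t,h)|\,dt\ge c>0$ for every $h\in(0,\delta)$ (since the integrand is a nontrivial combination $\alpha\cos(2^kt)+(1-\alpha)\cos(2^{k+1}t)$), while $\tilde v(\cdot,0)\equiv0$ has zero variation. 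With $v^+:=\tilde v\circ\mathcal T_\delta^{-1}$ your condition (ii) is unattainable for any $h>0$, so the displayed bound $2a_u(D+\varepsilon)$ cannot be reached, and the construction of $\omega_\Gamma$ collapses. The $\Sigma$-stretching in the paper is precisely the device that bypasses this obstruction; it is not a cosmetic step.

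A secondary point: even granting (ii), the passage from $C_{\gamma,L_\varphi,L_\psi}\cdot 2a_u(D+\varepsilon)$ to ``$\varepsilon/3+\omega_\Gamma(D)$ with $\omega_\Gamma$ independent of $v^\pm$'' is not automatic, since $a_u$ is merely monotone with $a_u(0^+)=0$; one needs a short case split ($\varepsilon\le D$ versus $\varepsilon>D$) and the observation $L_\varphi+L_\psi\le 2L_{u\res\Gamma}+D+2\varepsilon$ to absorb the prefactor. This is fixable, but the first gap is not without reinstating the $\Sigma^\pm$-transport (or an equivalent mechanism ensuring the interpolation data live on $\Gamma$ itself).
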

\begin{proof}
	Assume  that  $\gamma:[a,b]\rightarrow \R^2$ is an arc-length parametrization of the loop $\Gamma$. Let us consider the corresponding map $\mathcal T_\delta$ in \eqref{mathcalTdelta} and let $T^-_\delta$ and $T^+_\delta$ denote the interior and external parts of $T_\delta$ with respect to $B$, i.e., $ T^-_\delta=\overline B\cap T_\delta$, $T^+_\delta=T_\delta\setminus B$. Then we set, for any $n\geq 1$,
	\begin{align}
		w_{n}:=\begin{cases}
			v^-\circ \Sigma_{\delta,n,c}^-&\text{ in }T^-_\delta\setminus T_{\frac cn},\\
			v^+\circ  \Sigma_{\delta,n,c}^+&\text{ in }T^+_\delta\setminus T_{\frac cn},\\
			v^-&\text{ in }B\setminus T_\delta,\\
			v^+&\text{ in }A\setminus B\setminus T_\delta,
		\end{cases}
	\end{align}
	where we recall the maps $\Sigma_{\delta,n,c}^-$ and $\Sigma_{\delta,n,c}^+$ in \eqref{Sigma-} and \eqref{Sigma+}, with $c\in (0,\delta)$ fixed.
	We have to define $w_{n}$ in $T_{\frac cn}$: we set 
	$$\widetilde\varphi:=(v^-\circ \Sigma_{\delta,n,c}^- )\res \Gamma_{-\frac cn},\qquad \widetilde\psi:=(v^+\circ \Sigma_{\delta,n,c}^+)\res \Gamma_{\frac cn},$$
	and 
	recalling \eqref{Phi_interp}, \eqref{Psi_interp}, and \eqref{Hinterp}, we define
	\begin{align}
		w_{n}:=H_{\varphi,\psi,\frac cn}\qquad \qquad \text{ in }T_{\frac cn},
	\end{align}
	where $\varphi,\psi:[a,b]\rightarrow \R^m$ are given by
	\begin{align}
		\varphi=\widetilde \varphi\circ \mathcal T_\delta (\cdot,-\frac cn) \qquad \qquad \psi=\widetilde \psi\circ \mathcal T_\delta (\cdot,\frac cn).
	\end{align}
	By definition of $\widetilde\varphi$ and $\widetilde\psi$, using \eqref{Sigma-} and \eqref{Sigma+}, $\varphi$ and $\psi$  can be equivalently written as 
	\begin{align*}
		&\varphi(t)=v^-\circ\mathcal T_\delta\circ \Upsilon_{\delta,n,c}(t,-\frac cn)=v^-(\mathcal T_\delta(t,0))=v^-(\gamma(t))\\
		&\psi(t)=v^+\circ\mathcal T_\delta\circ \Upsilon_{\delta,n,c}(t,\frac cn)=v^+(\mathcal T_\delta(t,0))=v^+(\gamma(t)).
	\end{align*}
%
	In this way we have that $w_{n}$ is Lipschitz continuous in $T_{\frac cn}$ and 
	$$w_{n}=\widetilde \varphi \text{ on }\Gamma_{-\frac cn},\qquad \qquad w_{n}=\widetilde \psi \text{ on }\Gamma_{\frac cn}.$$
	Moreover $w_n$ turns out to be globally  Lipschitz in $A$ if so are $v^+$ and $v^-$.
	Let us estimate the gradient and Jacobian determinant integral of $w_{n}$ in $T_{\delta}$: by \eqref{grad_inTdelta}  we have
	\begin{align}\label{gradient1}
	\int_{T^-_\delta\setminus T_{\frac cn}}|\nabla w_{n}-\nabla v^-|dx&\leq   \beta^-_{v^-}(\frac1n)+\frac {C_\gamma}{n}(1+ \frac {C_\gamma}{n})\int_{T_\delta^-} |\nabla v^-|dx,\nonumber\\
	\int_{T^+_\delta\setminus T_{\frac cn}}|\nabla w_{n}-\nabla v^+|dx&\leq   \beta^+_{v^+}(\frac1n)+\frac {C_\gamma}{n}(1+ \frac {C_\gamma}{n})\int_{T_\delta^+} |\nabla v^+|dx,
	\end{align}
and in particular there is a constant $C_\gamma>0$ (depending on $\gamma$, but independent of $n$) such that 
	\begin{align}
	&\int_{T_\delta\setminus T_{\frac cn}}|\nabla w_{n}|dx\leq \beta^-_{v^-}(\frac1n)+  \beta^+_{v^+}(\frac1n)+ \frac { C_\gamma}{n}\Big(\int_{T^-_\delta} |\nabla v^-|dx+\int_{T^+_\delta} |\nabla v^+|dx\Big),\label{257}
\end{align}
Furthermore, 
on account of \eqref{det_inTdelta}, it follows, for all $i,j\in \{1,\dots,m\},$ $i\neq j$,
\begin{align}
&\int_{T_\delta^-\setminus T_{\frac cn}}|M_{12}^{ij}(\nabla w_{n})-M_{12}^{ij}(\nabla v^-)|dx+\int_{T_\delta^-\setminus T_{\frac cn}}|M_{12}^{ij}(\nabla w_{n})-M_{12}^{ij}(\nabla v^-)|dx\nonumber\\
&\leq  \eta^-_{v^-}(\frac1n)+ \eta^+_{v^+}(\frac1n)+{C_\gamma}{n}\Big(\int_{T^-_\delta}|M_{12}^{ij}(\nabla v^-)|dx+\int_{T^+_\delta}|M_{12}^{ij}(\nabla v^+)|dx\Big).\label{258}
\end{align}
	As for the integral on  $T_{\frac cn}$, using \eqref{est:nablaH} and \eqref{est:detH},  we have for all $i,j\in \{1,\dots,m\},$ $i\neq j$,
	\begin{align}\label{261}
		\int_{T_{\frac{c}{n}}}|\nabla w_{n}|dx&\leq C_{\gamma,L_\varphi,L_\psi}(\frac{1}{n}+\|s_{\psi}-s_{\varphi}\|_{L^1}+\|{\overline \psi}-{\overline \varphi}\|_{L^\infty})\nonumber\\
		&\leq C_{\gamma,L_\varphi,L_\psi}\big(\frac1n+\|s_{\psi}-s_{\sigma}\|_{L^1}+\|s_{\varphi}-s_\sigma\|_{L^1}+\|{\overline \psi}-{\overline\sigma}\|_{L^\infty}+\|{\overline \varphi}-{\overline\sigma}\|_{L^\infty}\big)\nonumber\\
		\int_{T_{\frac{c}{n}}}|M_{12}^{ij}(\nabla w_{n})|dx&\leq C_{\gamma,L_\varphi,L_\psi}\|{\overline \psi}-{\overline \varphi}\|_{L^\infty}\leq C_{\gamma,L_\varphi,L_\psi}\big(\|{\overline \psi}-{\overline \sigma}\|_{L^\infty}+\|{\overline \varphi}-{\overline \sigma}\|_{L^\infty}\big).
	\end{align}
	Here we have set $\sigma:=u\circ \gamma$ and denoted by $\overline \sigma$ the generalized curve in \eqref{gencurve}. By Proposition \ref{prop_convstrict} we find a function $a_\gamma$ such that, up to enlarging the constant $C_{\gamma,L_\varphi,L_\psi}$ if necessary, 
	\begin{align}\label{265}
		\int_{T_{\frac{c}{n}}}|M_{12}^{ij}(\nabla w_{n})|dx+\int_{T_{\frac{c}{n}}}|\nabla w_{n}|dx\leq C_{\gamma,L_\varphi,L_\psi}\big(\frac{1}{n}+a_\gamma(d_s(\varphi,\sigma)+d_s(\psi,\sigma))\big).
	\end{align}
We observe that inequalities 
	\eqref{gradient1}, \eqref{258}, and \eqref{265} entail
	\begin{align*}
		\int_{T^-_\delta}|\mathcal M(\nabla v^-)-\mathcal M(\nabla w_n)|dx+\int_{T^+_\delta}|\mathcal M(\nabla v^+)-\mathcal M(\nabla w_n)|dx
		\leq o(n)+C_\gamma a_\gamma(d_s(\varphi,\sigma)+d_s(\psi,\sigma))
	\end{align*}
	for some quantity $o(n)$ tending to $0$ as $n\rightarrow\infty$. These estimates together with \eqref{growthbis} entail
	\begin{align}\label{268}
		\mathcal F(w_n,B)&=\int_Bg(\mathcal M(\nabla w_n))\leq\int_Bg(\mathcal M(\nabla v^-))-\partial g(\mathcal M(\nabla v^-))(\mathcal M(\nabla v^-)-\mathcal M(\nabla w))dx\nonumber\\
		&\leq \mathcal F(v^-,B)+C_g \int_{T_\delta^-}|\mathcal M(\nabla v^-)-\mathcal M(\nabla w)| dx=:\mathcal F(v^-,B)+r',
	\end{align} 
	with $r'\leq C_g(o(n)+C_\gamma a_\gamma(d_s(\varphi,\sigma)+d_s(\psi,\sigma))) =:C_g o(n)+\omega_\Gamma(d_s(\varphi,\sigma)+d_s(\psi,\sigma))$. A similar reasoning  for the set $A\setminus B$ leads to
	\begin{align*}
	F(w,A\setminus B)=F(v^+,A\setminus B)+r'',
	\end{align*}
with $r''\leq C_g o(n)+\omega_\Gamma(d_s(\varphi,\sigma)+d_s(\psi,\sigma))$. So if we take $n$ large enough, we have obtained the last but one line in \eqref{glueingthesis}.
Also  the forth inequality in \eqref{glueingthesis} easily follows from \eqref{gradient1} and \eqref{261}.
It remains to estimate the $L^1$-norms. Owing to the explicit expression of $\Phi_{\varphi,\psi}$ and $H_{\varphi,\psi,\frac cn}$ in \eqref{Phi_interp} and  \eqref{Hinterp}, denoting $h=\frac cn$, we write
\begin{align*}
\int_{T^-_{\frac{c}{n}}}|w_n|dx&\leq (1+\frac{C_\gamma}{n})\int_a^b\int_0^h|\varphi\Big(t_\varphi\big(s_\varphi(t)\frac{r}{h}+s_\psi(t)\frac{h-r}{h} \big)\Big)|drdt\\
&=(1+\frac{C_\gamma}{n})\int_a^b\int_0^h|\overline \varphi\big(s_\varphi(t)\frac{r}{h}+s_\psi(t)\frac{h-r}{h} \big)|drdt\\
&\leq \frac{c}{n}(b-a)(1+\frac{C_\gamma}{n})\big(\|\overline \varphi-\overline \sigma\|_{L^\infty}+\|\overline \sigma\|_{L^\infty}\big)\\
&\leq \frac{C_\gamma}{n} \big(a_\gamma(d_s(\varphi,\sigma))+\|\sigma\|_{L^\infty}\big),
\end{align*}
where the first inequality follows from \eqref{detT-1} and the last one from Proposition \ref{prop_convstrict}. Analogously
\begin{align*}
	\int_{T^+_{\frac{c}{n}}}|w_n|dx&\leq (1+\frac{C_\gamma}{n})\int_a^b\int_0^h|\overline \varphi(s_\psi(t))\frac{h-r}{h}+\overline \psi(s_\psi(t))\frac{r}{h}|drdt\\&
	\leq \frac{C_\gamma}{n}\big(\|\overline \varphi-\overline \sigma\|_{L^\infty}+\|\overline \psi-\overline \sigma\|_{L^\infty}+2\|\overline \sigma\|_{L^\infty}\big)\\
	&\leq \frac{C_\gamma}{n} \big(a_\gamma(d_s(\varphi,\sigma))+a_\gamma(d_s(\psi,\sigma))+\|\sigma\|_{L^\infty}\big).
\end{align*}
At the same time we have
\begin{align*}
\int_B&|w_n-v^-|dx=\int_{T_\delta\setminus T_{\frac cn}}|v^-\circ\Sigma_{\delta,n,c}^- -v^-|dx\\
&\leq  \int_{T_\delta\setminus T_{\frac cn}}|v^-\circ\Sigma_{\delta,n,c}^- -u\circ\Sigma_{\delta,n,c}^-|dx+\int_{T_\delta\setminus T_{\frac cn}}|u\circ\Sigma_{\delta,n,c}^--u|dx+\int_{T_\delta\setminus T_{\frac cn}}|u-v^-|dx\\
&\leq (1+\frac{C_\gamma}{n})\|v^--u\|_{L^1(T_\delta^-)}+\int_{T_\delta\setminus T_{\frac cn}}|u\circ\Sigma_{\delta,n,c}^--u|dx+\|v^--u\|_{L^1(T_\delta^-)}\\
&\leq 3 \|v^--u\|_{L^1(T_\delta^-)}+\int_{T_\delta}|u\circ\Sigma_{\delta,n,c}-u|dx,
\end{align*}
(for $n>1/C_\gamma$)
and a similar inequality holds for $\int_{A\setminus B}|w_n-v^+|dx$.
Hence, the second and third inequalities in \eqref{glueingthesis} follow from the last three expressions, noticing that we can choose $n$ big enough so that 
\begin{align*}
\frac {C_\gamma}{n}\|\sigma\|_{L^\infty}\leq \varepsilon,\qquad \qquad \int_{T_\delta}|u\circ\Sigma_{\delta,n,c}-u|dx\leq \varepsilon,
\end{align*}
where the last condition can be obtained because  $u\circ\Sigma_{\delta,n,c}\rightarrow u$ in $L^1(T_\delta;\R^m)$ as $n\rightarrow \infty$.
\end{proof}

Being the construction leading to the result above local, it can be easily extended to more general open set $B$ as follows:

\begin{cor}\label{cor_glueing1}
	Let $A$ be a bounded open set and let $B\subset A$ be an open subset with boundary  $\partial B\subset A$ a finite union of  closed curves of class $C^3$; let $u\in BV(A;\R^m)$ be such that$$|Du|(\partial B)=0,\qquad u\res\partial B\in BV(\partial B;\R^m),$$  let $v^+,v^-\in\text{{\rm Lip}}_{\text{{\rm loc}}}(A;\R^m)$ be two maps and let $\delta>0$ small so that $T_\delta$ is a tubular neighborhood of $\Gamma:=\partial B$. Then for all $\varepsilon>0$ there exists $w\in\text{{\rm Lip}}_{\text{{\rm loc}}}(A;\R^m)$  such that the first five lines of \eqref{glueingthesis} hold, together with
	\begin{align}
	r\leq\varepsilon+\sum_{i=0}^N\omega_{\Gamma^i}\big(d_s(v^+\res \Gamma^i,u\res\Gamma^i)+d_s(v^-\res \Gamma^i,u\res\Gamma^i)\big),\label{269}
	\end{align}
where $\omega_{\Gamma^i}:\R^+\rightarrow \R^+$ are functions depending on $\Gamma^i$ and on $u\res \Gamma^i$ respectively, such that $\lim_{t\rightarrow 0^+}\omega_{\Gamma^i}(t)=0$. Also, if $v^+,v^-\in\text{{\rm Lip}}(A;\R^m)$ then $w\in \text{{\rm Lip}}(A;\R^m)$.
\end{cor}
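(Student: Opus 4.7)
The plan is to apply Lemma \ref{lem_glueing1} independently to each connected component of $\partial B$. Since $\partial B = \bigcup_{i=0}^{N}\Gamma^i$ is a disjoint union of finitely many closed Jordan curves of class $C^3$, for $\delta>0$ sufficiently small the tubular neighborhoods $T_\delta^i$ of the single loops $\Gamma^i$ are pairwise disjoint and all contained in $A$. Moreover, from the hypothesis $|Du|(\partial B)=0$ and disjointness we get $|Du|(\Gamma^i)=0$ for each $i$, and from $u\res\partial B\in BV(\partial B;\R^m)$ we get $u\res\Gamma^i\in BV(\Gamma^i;\R^m)$.

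Fix $\varepsilon>0$. First I would apply Lemma \ref{lem_glueing1} separately to each loop $\Gamma^i$ with the error threshold $\varepsilon/(N+1)$, producing a Lipschitz map $w^i$ defined on a neighborhood of $\Gamma^i$ (inside the tubular neighborhood $T_\delta^i$) which agrees with $v^-$ on $\partial T_\delta^i \cap \overline B$ and with $v^+$ on $\partial T_\delta^i \cap (A\setminus \overline B)$, via the interpolations $\Sigma^\pm_{\delta,n,c}$ and $H_{\varphi,\psi,c/n}$ from Section~\ref{subsec_properties}. Then I would define
\begin{align*}
w:=\begin{cases}
w^i & \text{in }T_\delta^i,\;i=0,\dots,N,\\
v^- & \text{in }B\setminus \bigcup_{i=0}^N T_\delta^i,\\
v^+ & \text{in }(A\setminus B)\setminus \bigcup_{i=0}^N T_\delta^i.
\end{cases}
\end{align*}
Since the $T_\delta^i$ are pairwise disjoint and the boundary matching on $\partial T_\delta^i$ is arranged by the lemma, $w$ is well-defined and belongs to $\text{Lip}_{\text{loc}}(A;\R^m)$ (and to $\text{Lip}(A;\R^m)$ if both $v^\pm$ are globally Lipschitz, because each local piece shares the Lipschitz behaviour of $v^\pm$ up to bounded perturbations).

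All the bounds in the first five lines of \eqref{glueingthesis} now follow by summing the corresponding estimates from Lemma~\ref{lem_glueing1} over the disjoint collection $\{T_\delta^i\}_{i=0}^N$, since the $T_\delta^i$ do not overlap, and since $w=v^-$ or $v^+$ away from them (so only the local contributions near each $\Gamma^i$ are nonzero). Adding up the local error terms and bounding $(N+1)\cdot \varepsilon/(N+1)=\varepsilon$ yields the remainder estimate \eqref{269}, with $\omega_{\Gamma^i}$ being the function produced by Lemma~\ref{lem_glueing1} applied on $\Gamma^i$.

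There is no real obstacle here beyond bookkeeping: the only points requiring a moment's care are (i) choosing $\delta$ small enough that the tubular neighborhoods are disjoint and contained in $A$, which is possible because $\partial B \subset A$ is compact and the loops are pairwise disjoint, and (ii) verifying that the gluing produced by Lemma~\ref{lem_glueing1} on each $T_\delta^i$ uses only values of $v^\pm$ inside $T_\delta^i$, so that the local constructions do not interfere with each other.
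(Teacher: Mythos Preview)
Your proposal is correct and matches the paper's own argument: the paper simply remarks that the construction in Lemma~\ref{lem_glueing1} is local (confined to the tubular neighborhood of the single loop) and hence can be repeated independently on each component $\Gamma^i$ of $\partial B$. Your splitting of $\varepsilon$ into $\varepsilon/(N+1)$ per component and summing the disjoint contributions is exactly the bookkeeping the paper leaves implicit.
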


A straightforward consequence of the previous result is the following:

\begin{cor}\label{cor_glueing}
	Let $A$ be a bounded open set and let $B\subset A$ be an open subset with boundary  $\partial B$ a finite union of  closed curves of class $C^3$; let $u\in BV(A;\R^m)$ be such that $$|Du|(\partial B)=0,\qquad u\res\partial B\in BV(\partial B;\R^m),$$  and let $(u_k),(v_k)\subset \text{{\rm Lip}}_{\text{{\rm loc}}}(A;\R^m)$ be two sequences of maps such that 
	\begin{align*}
		&v_k\rightarrow u\;\text{ and }\;u_k\rightarrow u \qquad \text{strictly in }BV(A;\R^m),\\
		&u_k\res\Gamma^i\rightarrow u\res \Gamma^i\;\text{ and }\;v_k\res \Gamma^i\rightarrow  u\res \Gamma^i \qquad \text{strictly in }BV(\Gamma^i;\R^m),
	\end{align*}
	where $\Gamma=\cup_{i=0}^N\Gamma^i$ is the decomposition of $\Gamma$ in simple Jordan curves $\Gamma^i$.
	Then there exists a sequence $(w_k)\subset \text{{\rm Lip}}_{\text{{\rm loc}}}(A;\R^m)$ such that 
	\begin{align*}
	&w_k\rightarrow u\text{ strictly in }BV(A;\R^m),\\
	&\liminf_{k\rightarrow\infty}F(w_k,B)\leq \liminf_{k\rightarrow\infty}F(v_k,B),\\
	&\liminf_{k\rightarrow\infty}F(w_k,A\setminus \overline B)\leq \liminf_{k\rightarrow\infty}F(u_k,A\setminus \overline B).
	\end{align*} 
\end{cor}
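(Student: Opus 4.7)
The plan is to apply Corollary \ref{cor_glueing1} once for each $k$, with $v^-:=v_k$, $v^+:=u_k$, and a vanishing error parameter $\varepsilon_k\searrow 0$. This produces a sequence $(w_k)\subset\text{{\rm Lip}}_{\text{{\rm loc}}}(A;\R^m)$ that agrees with $v_k$ on $B\setminus T_\delta$ and with $u_k$ on $(A\setminus B)\setminus T_\delta$, and satisfies all the estimates in \eqref{glueingthesis} with a remainder $r_k$ controlled by \eqref{269}. Since $u_k\res \Gamma^i$ and $v_k\res \Gamma^i$ converge strictly in $BV(\Gamma^i;\R^m)$ to $u\res \Gamma^i$, the arguments $d_s(u_k\res\Gamma^i,u\res\Gamma^i)+d_s(v_k\res\Gamma^i,u\res\Gamma^i)$ of the moduli $\omega_{\Gamma^i}$ tend to $0$; continuity of $\omega_{\Gamma^i}$ at the origin (with $\omega_{\Gamma^i}(0)=0$) together with $\varepsilon_k\searrow 0$ then forces $r_k\to 0$.

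The two $\liminf$ inequalities for $F$ drop out directly: the fifth line of \eqref{glueingthesis} reads $F(w_k,B)\leq F(v_k,B)+r_k$ and $F(w_k,A\setminus\overline B)\leq F(u_k,A\setminus\overline B)+r_k$, and passing to the liminf gives the thesis. The $L^1$-convergence $w_k\to u$ on $A$ is similarly immediate from the triangle inequality, the second and third lines of \eqref{glueingthesis}, and the hypotheses $v_k\to u$, $u_k\to u$ in $L^1(A;\R^m)$.

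For the convergence of total variations, since each $w_k$ is locally Lipschitz, $|Dw_k|(A)=\int_A|\nabla w_k|\,dx$. The fourth line of \eqref{glueingthesis} then yields
\begin{equation*}
|Dw_k|(A) \;\leq\; \int_B|\nabla v_k|\,dx + \int_{A\setminus B}|\nabla u_k|\,dx + 2r_k \;=\; |Dv_k|(B) + |Du_k|(A\setminus \overline B) + 2r_k.
\end{equation*}
The hypothesis $|Du|(\partial B)=0$ together with $|Du_k|(\partial B)=|Dv_k|(\partial B)=0$ (from Lipschitz regularity) makes the measure-theoretic lemma following Theorem \ref{teo_32} applicable, giving $|Dv_k|(B)\to |Du|(B)$ and $|Du_k|(A\setminus\overline B)\to |Du|(A\setminus\overline B)$. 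Hence
\begin{equation*}
\limsup_{k\to\infty}|Dw_k|(A) \;\leq\; |Du|(B)+|Du|(A\setminus \overline B) \;=\; |Du|(A),
\end{equation*}
again by $|Du|(\partial B)=0$. The matching $\liminf$ bound comes from lower semicontinuity of total variation under $L^1$-convergence, so $|Dw_k|(A)\to |Du|(A)$, establishing the strict $BV$-convergence. All the substantive work has been done in Corollary \ref{cor_glueing1}; the only subtlety is controlling the transition collar $T_\delta$, which is absorbed entirely into $r_k$ and disappears because of the trace-strict-convergence hypothesis.
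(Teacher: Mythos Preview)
Your proof is correct and is precisely the argument the paper intends: Corollary \ref{cor_glueing} is stated as a ``straightforward consequence'' of Corollary \ref{cor_glueing1} with no proof given, and your application of Corollary \ref{cor_glueing1} with $v^-=v_k$, $v^+=u_k$, $\varepsilon=\varepsilon_k\searrow 0$, combined with the unnamed lemma after Theorem \ref{teo_32} to handle the strict convergence on $B$ and $A\setminus\overline B$, fills in exactly those details. The use of lower semicontinuity for the matching $\liminf$ bound on $|Dw_k|(A)$ is the right way to close the argument.
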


We now use the previous result to modify suitable recovery sequences.

\begin{lemma}\label{lem_recoverymodified}
Let $A$ be a bounded open set and let $B\subset\subset A$ be a open subset whose boundary is $\partial B=:\Gamma$ a finite union of closed curves of class $C^3$. Let $u\in BV(A;\R^m)$ be given and assume that $0$ is a regular value for the function $\psi$ in \eqref{psi}. Then there exists a recovery sequence $(v_k)\subset  \text{{\rm Lip}}(B;\R^m)$ for $\mathcal F(u,B)$ such that $v_k\res\Gamma\rightarrow u\res \Gamma$ strictly in $BV(\Gamma;\R^m)$.
\end{lemma}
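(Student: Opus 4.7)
The strategy is, starting from an arbitrary recovery sequence $(\tilde v_k)\subset \text{Lip}_{\text{loc}}(B;\R^m)$ for $\mathcal F(u,B)$, to modify it on a thin tubular neighbourhood of $\Gamma$ by composition with one of the transformations $\Sigma_{\delta,n,c}^{-1}$ of Section \ref{sec_preliminaryresults}, arranged so that the trace of the modified sequence on $\Gamma$ is, up to a bi-Lipschitz reparametrisation, the trace of $\tilde v_k$ on a carefully chosen nearby interior level set $\Gamma_{r_k}$, $r_k\to 0^-$. The regular-value assumption on $\psi_u$ at $0$ is precisely what will allow both $\tilde v_k\res \Gamma_{r_k}\to u\res \Gamma_{r_k}$ and $u\res \Gamma_{r_k}\to u\res \Gamma$ to hold strictly in $BV$ along a common sequence $r_k$.

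Treating each $C^3$ loop forming $\Gamma$ separately, let $\gamma\in C^3([a,b];\R^2)$ be its arc-length parametrisation and $T_\delta$ a tubular neighbourhood with the associated maps $\mathcal T_\delta$, $\Upsilon_{\delta,n,c}$, $\Sigma_{\delta,n,c}$. Pick $\delta$ such that $|Du|(\Gamma_{-\delta})=0$; together with the regular-value hypothesis, the standard decomposition/lower-semicontinuity argument promotes $\tilde v_k\to u$ strictly on $B$ to strict convergence separately on $T_\delta^-:=T_\delta\cap B$ and on $B\setminus\overline{T_\delta^-}$. Lemma \ref{lemma_coareaBV} then yields, up to a subsequence, $\tilde v_k\res \Gamma_r\to u\res \Gamma_r$ strictly in $BV(\Gamma_r;\R^m)$ for a.e.\ $r\in(-\delta,0)$; the regularity of $\psi_u$ at $0$ additionally gives $u\res \Gamma_r\to u\res \Gamma$ strictly as $r\to 0^-$ along a.e.\ values. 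A diagonal argument in the strict distance $d_s$ of \eqref{strictdistance}, quantified via the modulus $a_\gamma$ of Proposition \ref{prop_convstrict}, then supplies $r_k=-c_k/n_k\to 0^-$ with $n_k\to\infty$ (along a further non-relabelled subsequence) such that
\[
\tilde v_k\circ \mathcal T_\delta(\cdot,r_k)\longrightarrow u\circ\gamma\quad\text{strictly in }BV([a,b];\R^m).
\]
One finally defines $v_k:=\tilde v_k\circ \Sigma_{\delta,n_k,c_k}^{-1}$ on $T_\delta^-$ and $v_k:=\tilde v_k$ elsewhere on $B$; because $\Sigma_{\delta,n_k,c_k}$ fixes $\Gamma_{-\delta}$ pointwise, no discontinuity appears across this interface and $v_k$ is locally Lipschitz on $B$.

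The required strict boundary convergence $v_k\res \Gamma\to u\res \Gamma$ in $BV(\Gamma;\R^m)$ follows from the display above together with Remark \ref{rem_stricttraces}, since on $\Gamma$ the map $v_k$ equals $\tilde v_k\res \Gamma_{r_k}$ composed with a reparametrisation of $\Gamma$ coming from $\Sigma_{\delta,n_k,c_k}^{-1}\res \Gamma$. Energy control comes from the composition estimates \eqref{grad_inTdelta}--\eqref{det_inTdelta}, the uniform convergence $\Sigma_{\delta,n_k,c_k}\to\text{Id}$ of \eqref{Sigma_uniform}, and the growth bound \eqref{growth1}: after possibly enlarging $n_k$ they give $F(v_k,T_\delta^-)-F(\tilde v_k,T_\delta^-)\to 0$, whence $\limsup_k F(v_k,B)\le \mathcal F(u,B)$, and by the same ingredients $v_k\to u$ strictly in $BV(B;\R^m)$. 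The main difficulty lies in the diagonal choice of $r_k$: one has to combine two independent convergences — the sequence-dependent a.e.\ slicewise strict convergence $\tilde v_k\res \Gamma_r\to u\res \Gamma_r$ and the sequence-independent trace convergence $u\res \Gamma_r\to u\res \Gamma$ provided by the regular-value hypothesis — into a single $r_k\to 0$ along which both hold simultaneously. Proposition \ref{prop_convstrict}, which metrises strict convergence of Lipschitz reparametrisations of curves and supplies a common modulus $a_\gamma$, is what makes this synchronisation feasible.
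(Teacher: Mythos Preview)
Your approach is essentially the same as the paper's: compose a recovery sequence with $(\Sigma^-_{\delta,n,c})^{-1}$ so that its trace on $\Gamma$ becomes the trace on a nearby interior level set $\Gamma_{-c/n}$ where slicewise strict convergence holds, then use the regular-value hypothesis on $\psi_u$ to push that strict convergence back to $\Gamma$. The paper organises the diagonal slightly differently---first fixing for each $n$ a level $-c_n/n$ on which both $\psi_u(-c_n/n)\to\psi_u(0)$ and $\tilde v_k\res\Gamma_{-c_n/n}\to u\res\Gamma_{-c_n/n}$ hold, then choosing $n_k$ for each $k$---but this is the same argument.

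Two small points. First, on $\Gamma$ the reparametrisation induced by $\Sigma^{-1}$ is actually the identity in the $t$-coordinate (since $\Upsilon^{-1}_{\delta,n,c}(t,0)=(t,-c/n)$), so you get $v_k\circ\mathcal T_\delta(\cdot,0)=\tilde v_k\circ\mathcal T_\delta(\cdot,-c/n)$ exactly; your ``reparametrisation'' remark is harmless but unnecessary. Second, your citation of \eqref{grad_inTdelta}--\eqref{det_inTdelta} for the energy control is slightly off: those inequalities compare $v\circ\Sigma^\pm$ to $v$ and carry the terms $\beta_v^\pm(1/n)$, $\eta_v^\pm(1/n)$, which depend on $v$ and need not vanish uniformly when $v=\tilde v_k$ varies with $k$. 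The paper sidesteps this by comparing $g(\mathcal M(\nabla v_k))$ directly to $g(\mathcal M(\nabla\tilde v_k)\circ\Sigma^{-1})$ via \eqref{nablaSigmapm} (the $O(1/n)$ closeness of $\nabla\Sigma^{-1}$ to the identity) and then changing variables; this yields $F(v_k,T_\delta^-)\le(1+C_\gamma/n_k)F(\tilde v_k,T_\delta^-)+\tfrac{C}{n_k}(|D\tilde v_k|(B)+F(\tilde v_k,B))$ with no $\beta$--$\eta$ terms. That is the correct mechanism; your sketch just needs to point to \eqref{nablaSigmapm} and the change of variables rather than to \eqref{grad_inTdelta}--\eqref{det_inTdelta}.
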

\begin{proof}
Let $(u_k)\subset \text{{\rm Lip}}_{\text{{\rm loc}}}(B;\R^m)$ be a recovery sequence for $\mathcal F(u,B)$, let $T_\delta$ be a tubular neighborhood of $\Gamma$, with $\delta\in(0,1)$ small enough. We will modify $u_k$ in $T_\delta^-$ in order to produce $v_k$. To do so, we again assume that $\Gamma$ consists of a unique loop (the same argument applied to each component of $\Gamma$  covers the general case). Let $\Sigma^-_{\delta,n,c_n}$ be the map in \eqref{Sigma-}, where we consider the numbers $c_n\in (0,\delta)$ in such a way that 
\begin{align}\label{limiteregolare}
\lim_{n\rightarrow \infty}\psi(-\frac{c_n}{n})=\psi(0),
\end{align}
and, at the same time, for all $n>0$ fixed
\begin{align*}
u_k\res\Gamma_{-\frac{c_n}{n}}\rightarrow u\res \Gamma_{-\frac{c_n}{n}}\qquad \text{ strictly in } BV(\Gamma_{-\frac{c_n}{n}};\R^m).
\end{align*}
This choice is possible thanks to the hypothesis that $0$ is regular for $\psi$, and since the convergence above holds on $\Gamma_t$, for a.e. $t\in (-\delta,0)$.
Then we define
\begin{align*}
	v_{k,n}(x):=u_k\left((\Sigma^-_{\delta,n,c_n})^{-1}(x)\right),\qquad x\in T_\delta^-.
\end{align*}
Notice that $(\Sigma^-_{\delta,n,c_n})^{-1}:\overline T_\delta^-\rightarrow \overline T_\delta^-\setminus T_{\frac{c_n}{n}}$ is such that $(\Sigma^-_{\delta,n,c_n})^{-1}(\Gamma)=\Gamma_{-\frac{c_n}{n}}$, and so, writing $x=\mathcal T_\delta(t,0)$ for $x\in \Gamma$, $t\in [a,b]$, we have 
\begin{align*}
	v_{k,n}(\mathcal T_\delta(t,0))=u_k(\mathcal T_\delta \circ \Upsilon^{-1}_{\delta,n,c_n}(t,0))=u_k(\mathcal T_\delta(t,\tau_{\delta,n,c_n}^{-1}(0)))=u_k(\mathcal T_\delta(t,-\frac{c_n}{n})),
\end{align*}
for all $t\in [a,b]$. In particular Remark \ref{rem_stricttraces} implies that, for all $n>0$ fixed
$$v_{k,n}\circ\mathcal T_\delta(\cdot,0)\rightarrow u\circ\mathcal T_\delta(\cdot,-\frac{c_n}{n}) \qquad \text{strictly in }BV([a,b];\R^m).$$
We can then find, for all $k>0$, a natural number $n_k>0$ such that $n_k\nearrow+\infty$ (as $k\rightarrow\infty$) and  satisfying
\begin{align*}
	\int_{\Gamma}|\nabla v_{k,n_k}\zeta|d\mathcal H^1=
\int_a^b|\frac{d}{dt}v_{k,n_k}(\mathcal T_\delta(t,0))|dt\leq |D_\zeta u(\mathcal T_\delta(\cdot,-\frac{c_{n_k}}{n_k}))|([a,b])+\frac1k,
\end{align*}
where $\zeta$ appears in \eqref{zeta} which, we recall, is the unit oriented tangent vector to $\Gamma$.
Recalling also \eqref{Dugammar}, we also have
 \begin{align*}
|D_\zeta u(\mathcal T_\delta(\cdot,-\frac{c_{n_k}}{n_k}))|([a,b])=|D_\zeta u|(\Gamma_{-\frac{c_{n_k}}{n_k}})
 \end{align*}
so we readily infer, thanks to \eqref{limiteregolare} and the lower semicontinuity of the variation, that 
\begin{align*}
\lim_{k\rightarrow\infty}	\int_{\Gamma}|\nabla v_{k,n_k}\zeta|d\mathcal H^1=|D_\zeta u|(\Gamma),
\end{align*}
and therefore the function $v_k:=v_{k,n_k}$ satisfies
\begin{align*}
	v_k\res\Gamma\rightarrow u\res \Gamma\qquad \text{ strictly in } BV(\Gamma;\R^m).
\end{align*}
To conclude the proof we need to show that $v_k$ is still a recovery sequence for $\mathcal F(u,B)$. Notice that, since $u_k$ are Lipschitz in $B\setminus T_{\frac{c_n}{n}}$ and $\Sigma^-_{\delta,n,c_n}$ is bi-Lipschitz, also $v_{k,n}$ are Lipschitz continuous on $B$. In $T_\delta^-$, arguing as in \eqref{composedmap}, it holds, for $i,j\in \{1,\dots,m\}$, $i\neq j$,
\begin{align}\label{adesso}
	&\nabla v_k(x)= \nabla u_k(\Sigma_{\delta,n_k,c_{n_k}}^{-1}(x))\nabla\Sigma_{\delta,n_k,c_{n_k}}^{-1}(x)=\nabla u_k(\Sigma_{\delta,n_k,c_{n_k}}(x))+\nabla u_k(\Sigma_{\delta,n_k,c_{n_k}}(x))\widehat \sigma^-_{\delta,n_k,c_{n_k}}(x),\nonumber\\
	&M^{ij}_{12}(\nabla v_k(x))=M^{ij}_{12}(\nabla u_k(\Sigma_{\delta,n_k,c_{n_k}}(x)))+M^{ij}_{12}(\nabla u_k(\Sigma_{\delta,n_k,c_{n_k}}(x)))\widehat d^-_{\delta,n_k,c_{n_k}}(x),
\end{align}
thanks to \eqref{nablaSigmapm}.
We then introduce the vector 
\begin{align}
\widetilde {\mathcal M}(\nabla u_k(\Sigma_{\delta,n_k,c_{n_k}}^{-1}(x))):=\big(1,\nabla u_k(\Sigma_{\delta,n_k,c_{n_k}}^{-1}(x)),M_{12}(\nabla u_k(\Sigma^{-1}_{\delta,n_k,c_{n_k}}(x)))\big).
\end{align}
where to shotcut the notation, we have denoted $M_{12}(\nabla w)\in \R^{m(m+1)/2}$ the vector with entries $M^{ij}_{12}(\nabla w)$, $i,j\in \{1,\dots,m\}$, $i\neq j$.  
Using \eqref{nablaSigmapm} we infer
\begin{align*}
|\mathcal M(\nabla v_k)-\widetilde {\mathcal M}(\nabla u_k(\Sigma_{\delta,n_k,c_{n_k}}^{-1}(x)))|\leq \frac{C_\gamma}{n_k}\Big|\big(0,\nabla u_k(\Sigma_{\delta,n_k,c_{n_k}}^{-1}(x)),M_{12}(\nabla u_k(\Sigma^{-1}_{\delta,n_k,c_{n_k}}(x)))\big)\Big|.
\end{align*} 
Therefore, exployting \eqref{growthbis}, \eqref{detSigma}, and the convexity of $g$, we can estimate
\begin{align*}
	F(v_k,T_\delta^-)&\leq \int_{T_\delta^-}|g(\mathcal M(\nabla v_k))-g\big(\widetilde {\mathcal M}(\nabla u_k(\Sigma_{\delta,n_k,c_{n_k}}^{-1}(x)))\big)|dx+ \int_{T_\delta^-}g\big(\widetilde {\mathcal M}(\nabla u_k(\Sigma_{\delta,n_k,c_{n_k}}^{-1}(x)))\big)dx\\
	&\leq \frac{C_gC_\gamma}{n_k}\int_{T_\delta^-}\Big|\big(0,\nabla u_k(\Sigma_{\delta,n_k,c_{n_k}}^{-1}(x)),M_{12}(\nabla u_k(\Sigma^{-1}_{\delta,n_k,c_{n_k}}(x)))\big)\Big|dx\\
	&\;\;\;\;+\int_{T_\delta^-}g\big(\widetilde {\mathcal M}(\nabla u_k(\Sigma_{\delta,n_k,c_{n_k}}^{-1}(x)))\big)dx\\
	&=\frac{C_gC_\gamma}{n_k}\int_{T_\delta^-\setminus \overline T_{\frac{c_{n_k}}{n_k}}}\Big|\big(0,\nabla u_k(y),M_{12}(\nabla u_k(y)\big)\Big||\det(\nabla \Sigma_{\delta,n_k,c_{n_k}}(y))|dy\\	&\;\;\;\;+\int_{T_\delta^-\setminus \overline T_{\frac{c_{n_k}}{n_k}}}g\big(\widetilde {\mathcal M}(\nabla u_k(y))\big)|\det(\nabla \Sigma_{\delta,n_k,c_{n_k}}(y))|dy\\
	&\leq \frac{C_gC_\gamma}{n_k}(1+\frac{C_\gamma}{n_k})\int_{T_\delta^-\setminus \overline T_{\frac{c_{n_k}}{n_k}}}\Big|\big(0,\nabla u_k(y),M_{12}(\nabla u_k(y)\big)\Big|dy\\	&\;\;\;\;+(1+\frac{C_\gamma}{n_k})\int_{T_\delta^-\setminus \overline T_{\frac{c_{n_k}}{n_k}}}g\big(\widetilde {\mathcal M}(\nabla u_k(y))\big)dy\\
	&\leq \frac{C_gC_\gamma}{n_k}(1+\frac{C_\gamma}{n_k})\left(|\nabla u_k|(A)+|M_{12}(\nabla u_k)|(A)\right)+(1+\frac{C_\gamma}{n_k})F(u_k,T_\delta^-)
\end{align*}
and so, thanks to \eqref{growth2},  we conclude, for $k$ large enough,
\begin{align*}
	F(v_k,T_\delta^-)\leq 	F(u_k,T_\delta^-)+ \frac{C_{\gamma,g}}{n_k}\left(|\nabla u_k|(A)+F(u_k,A)\right),
\end{align*}
for a constant $C_{\gamma,g}>0$ depending on $\gamma$, $g$, but independent on $u_k$ and $k$.
%
As a consequence, using that $u_k$ is a recovery sequence and that it is  converging to $u$ strictly in $BV(\Om;\R^m)$, we are led to 
\begin{align*}
\limsup_{k\rightarrow \infty} 	F(v_k,T_\delta^-)\leq \lim_{k\rightarrow \infty} 	F(u_k,T_\delta^-),
\end{align*}
which means that $v_k$ is a recovery sequence as well, thanks to the fact that $v_k$ still converges to $u$ strictly in $BV(\Om;\R^m)$ (how it is easily checked from \eqref{adesso}).
\end{proof}

\begin{prop}\label{prop:recoveryrestriction}
Let $A$ be a bounded open set and let $B\subset\subset A$ be a open subset whose boundary is $\partial B=:\Gamma\subset A$ a finite union of  closed curves of class $C^3$. Let $T_\delta\subset A$ be a tubular neighborhood of $\Gamma$, let  $\psi:(-\delta,\delta)\rightarrow \R$ be the function defined in \eqref{psi}, and assume that $0$ is a regular value for $\psi$. Let $(u_k)\subset \text{{\rm Lip}}_{\text{{\rm loc}}}(A;\R^m)$ be a recovery sequence for $\mathcal A(u;A)$ such that $u_k\res\Gamma\rightarrow u\res \Gamma$ strictly in $BV(\Gamma;\R^m)$; then $u_k\res B$ is a recovery sequence for $\mathcal A(u;B)$.
\end{prop}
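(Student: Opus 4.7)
The strategy is to verify the two requirements that make $u_k\res B$ a recovery sequence for $\mathcal{F}(u,B)$: strict convergence $u_k\res B\to u\res B$ in $BV(B;\R^m)$, and the energy equality $\lim_k F(u_k,B)=\mathcal{F}(u,B)$.

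Strict convergence on $B$ will follow from $L^1$ restriction together with total-variation convergence $|Du_k|(B)\to |Du|(B)$. Since each $u_k$ is Lipschitz, $|Du_k|(\Gamma)=0$; the regular-value hypothesis on $\psi$, combined with the assumed strict convergence $u_k\res\Gamma\to u\res\Gamma$ in $BV(\Gamma;\R^m)$ and Lemma \ref{lemma_coareaBV}, gives $|Du|(\Gamma)=0$. The Lemma following Theorem \ref{teo_32}, applied to $Du_k\rightharpoonup Du$ in $A$ with $|Du_k|(A)\to |Du|(A)$, then yields $|Du_k|(B)\to |Du|(B)$.

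The bound $\liminf_k F(u_k,B)\geq \mathcal{F}(u,B)$ is then immediate from the definition of the relaxation. For the reverse inequality, extract a subsequence (not relabeled) along which $F(u_k,B)\to L:=\limsup_k F(u_k,B)$ and $F(u_k,A\setminus\overline B)\to M$; since $F(u_k,A)\to \mathcal{F}(u,A)$ and $F(u_k,A)=F(u_k,B)+F(u_k,A\setminus\overline B)$, one has $L+M=\mathcal{F}(u,A)$. Lemma \ref{lem_recoverymodified}, whose hypothesis is precisely the regular-value condition, produces a recovery sequence $(v_k)\subset \text{{\rm Lip}}(B;\R^m)$ for $\mathcal{F}(u,B)$ with $v_k\res\Gamma\to u\res\Gamma$ strictly in $BV(\Gamma;\R^m)$. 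Apply Corollary \ref{cor_glueing1} with $v^-=v_k$, $v^+=u_k$, and $\varepsilon_k\to 0$: since both $d_s(v_k\res\Gamma,u\res\Gamma)\to 0$ and $d_s(u_k\res\Gamma,u\res\Gamma)\to 0$, the error $r_k$ in \eqref{269} vanishes, producing $w_k\in \text{{\rm Lip}}_{\text{{\rm loc}}}(A;\R^m)$ satisfying simultaneously
\begin{align*}
F(w_k,B)\leq F(v_k,B)+r_k,\qquad F(w_k,A\setminus\overline B)\leq F(u_k,A\setminus\overline B)+r_k,
\end{align*}
while the $L^1$- and gradient-bounds in \eqref{glueingthesis} together with $|Dw_k|(\Gamma)=0$ and lower semicontinuity force $w_k\to u$ strictly in $BV(A;\R^m)$. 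Lower semicontinuity of $\mathcal{F}(u,\cdot)$ at $A$ then gives $\mathcal{F}(u,A)\leq \liminf_k F(w_k,A)$, so summing the two inequalities above and passing to the limit along the chosen subsequence,
\begin{align*}
L+M=\mathcal{F}(u,A)\leq \liminf_k F(w_k,A)&\leq \lim_k F(v_k,B)+\lim_k F(u_k,A\setminus\overline B)\\
&=\mathcal{F}(u,B)+M,
\end{align*}
whence $L\leq \mathcal{F}(u,B)$ and the equality $\lim_k F(u_k,B)=\mathcal{F}(u,B)$ is proved.

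The main delicacy is that Lemma \ref{lem_glueing1} must be applied in its sharp pointwise form, producing a single $w_k$ satisfying both energy estimates with the same vanishing error $r_k$; this is what allows summing the inequalities along one and the same subsequence, whereas a weaker $\liminf$-statement (as in Corollary \ref{cor_glueing}) would not permit realizing both liminfs simultaneously. The regular-value hypothesis on $\psi$ enters in two distinct places: once to guarantee $|Du|(\Gamma)=0$ and hence strict convergence of the restriction, and once in order to invoke Lemma \ref{lem_recoverymodified} for the recovery sequence $(v_k)$.
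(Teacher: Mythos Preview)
Your proof is correct and follows essentially the same route as the paper: obtain a recovery sequence $(v_k)$ for $\mathcal F(u,B)$ with good trace via Lemma~\ref{lem_recoverymodified}, glue it to $u_k$ outside $B$, and compare the resulting energy on $A$ with $\mathcal F(u,A)$. The paper phrases this by contradiction (assuming $\lim_k F(u_k,B)>\mathcal F(u,B)$ and deriving $\lim_k F(w_k,A)<\mathcal F(u,A)$) and cites Corollary~\ref{cor_glueing}, whereas you argue directly via Corollary~\ref{cor_glueing1}; your additional step verifying strict convergence of $u_k\res B$ (using $|Du|(\Gamma)=0$) is a point the paper leaves implicit. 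Your closing remark that the pointwise estimates of Lemma~\ref{lem_glueing1} are essential is slightly overstated: in the contradiction framework one first passes to a subsequence along which all relevant limits exist, and then the $\liminf$-statement of Corollary~\ref{cor_glueing} suffices.
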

\begin{proof} 
We prove the assertion arguing by contradiction, so assume that $u_k$ is not a recovery sequence for $\mathcal F(u,B)$; we can then extract a subsequence such that there exists the limit 
$$\lim_{k\rightarrow \infty}F(u_k,B)>\mathcal F(u,B).$$
Let  $(v_k)\subset  \text{{\rm Lip}}_{\text{{\rm loc}}}(B;\R^m)$ be a recovery sequence for $\mathcal F(u,B)$ so that 
$$\mathcal F(u,B)=\lim_{k\rightarrow\infty} F(v_k,B)<\lim_{k\rightarrow\infty} F(u_k,B).$$ 
According to Lemma \ref{lem_recoverymodified}, we can suppose that $v_k\res \Gamma \rightarrow u\res \Gamma$ strictly in $BV(\Gamma;\R^m)$, and that $v_k$ are Lipschitz continuous on $B$. Therefore, the same being true for $u_k\res \Gamma$, we are in the hypotheses of Corollary \ref{cor_glueing}, and we can find a sequence $w_k\in \text{{\rm Lip}}_{\text{{\rm loc}}}(A;\R^m)$ such that
\begin{align*}
\lim_{k\rightarrow\infty} F(w_k,A)&=\lim_{k\rightarrow\infty} F(w_k,A\setminus \overline B)+\lim_{k\rightarrow\infty} F(w_k,B)=\lim_{k\rightarrow\infty} F(u_k,A\setminus \overline B)+\lim_{k\rightarrow\infty} F(v_k,B)\nonumber\\
&<\lim_{k\rightarrow\infty} F(u_k,A\setminus \overline B)+\lim_{k\rightarrow\infty} F(u_k,B)=\lim_{k\rightarrow\infty} F(u_k,A)=\mathcal F(u,A),
\end{align*}
that is absurd. The thesis follows.
\end{proof}
\section{Proof of Theorem \ref{teo_main1}: Monotonicity, inner regularity and sub-addivitity}\label{sec_proof}
This Section is devoted to the proof of Theorem \ref{teo_main1}. To this purpose we need to use Theorem \ref{DGL_teo}, and so we will check that hypohteses (i)-(iv) of that theorem are satisfied. We start with the following technical result: 
\begin{prop}\label{prop31}
	Let $A\subset\Om$ be open and let $(u_k)\subset \text{{\rm Lip}}_{\text{{\rm loc}}}(\Om;\R^m)$ be a sequence such that $u_k\rightarrow u$ strictly in $BV(\Om;\R^m)$; then there exists a sequence $(w_j)\subset \text{{\rm Lip}}_{\text{{\rm loc}}}(A;\R^m)$ such that the following holds:
	\begin{itemize}
\item[(i)] $w_j\rightarrow u$ strictly in $BV(A;\R^m)$;
\item[(ii)] $\liminf_{j\rightarrow \infty}F(w_j,A)\leq \liminf_{k\rightarrow \infty}F(u_k,A).$
	\end{itemize}

\end{prop}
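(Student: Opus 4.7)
The idea is to build $w_j$ as a composition $w_j=u_{k(j)}\circ\Psi_{n(j)}$, where $B_n\subset\subset A$ is a smooth interior exhaustion of $A$ and $\Psi_n:A\to B_n$ is a bi-Lipschitz compression diffeomorphism very close to the identity. Assume without loss of generality that $L:=\liminf_k F(u_k,A)<+\infty$, extracting a subsequence along which $F(u_k,A)\to L$; otherwise (ii) is vacuous. The strict convergence $u_k\to u$ in $BV(\Omega;\mathbb R^m)$ ensures $|Du_k|(\Omega)$ is bounded, and together with \eqref{growth2} this yields a uniform bound on $\|\mathcal M(\nabla u_k)\|_{L^1(A)}$.

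\textbf{Exhaustion and compression map.} Set $B_n:=\{x\in A:\operatorname{dist}(x,\partial A)>a_n\}$ with $a_n\searrow 0$ chosen so that $a_n/a_{n-1}\to 0$; after a mild smoothing of $\partial B_n$ for a cofinal sequence of parameters, each $\partial B_n$ is a finite union of $C^3$ Jordan curves with $|Du|(\partial B_n)=0$, by Sard-type arguments and since the exceptional values for $|Du|$ form a countable set. Strict convergence on $\Omega$ implies $|Du_k|\rightharpoonup|Du|$ weakly$^*$ as Radon measures, which combined with $|Du|(\partial B_n)=0$ yields $|Du_k|(B_n)\to|Du|(B_n)$. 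Using the tubular neighborhood of $\partial B_{n-1}$ of Section 3.3, I define $\Psi_n$ to be the identity on $B_{n-1}$ and, on the collar $A\setminus B_{n-1}$, a linear compression of the signed-distance coordinate from $[0,a_{n-1}]$ onto $[0,a_{n-1}-a_n]$; the rate $a_n/a_{n-1}\to 0$ ensures
\[
\|\Psi_n-\operatorname{id}\|_{L^\infty(A)}+\|\nabla\Psi_n-I\|_{L^\infty(A)}+\|\det\nabla\Psi_n-1\|_{L^\infty(A)}\underset{n\to\infty}{\longrightarrow}0.
\]

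\textbf{Verification of (i) and (ii).} Set $w_{k,n}:=u_k\circ\Psi_n$; since $\Psi_n(A)=B_n\subset\subset\Omega$ and $u_k\in\operatorname{Lip}_{\operatorname{loc}}(\Omega;\mathbb R^m)$, each $w_{k,n}$ lies in $\operatorname{Lip}(A;\mathbb R^m)$. A diagonal extraction produces $w_j:=u_{k(j)}\circ\Psi_{n(j)}$ with $n(j),k(j)\to\infty$, $F(u_{k(j)},A)\to L$, and $|Du_{k(j)}|(B_{n(j)})\to|Du|(A)$. The $L^1$-convergence $w_j\to u$ in $A$ splits via
\[
\|w_j-u\|_{L^1(A)}\leq\|(u_{k(j)}-u)\circ\Psi_{n(j)}\|_{L^1(A)}+\|u\circ\Psi_{n(j)}-u\|_{L^1(A)},
\]
where the first term is controlled by change of variables and $\|u_{k(j)}-u\|_{L^1(\Omega)}\to 0$, and the second by $\Psi_n\to\operatorname{id}$ uniformly and continuity of composition in $L^1$. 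The chain rule $\nabla w_j(x)=\nabla u_{k(j)}(\Psi_{n(j)}(x))\nabla\Psi_{n(j)}(x)$ combined with a change of variables yields $\int_A|\nabla w_j|\,dx\leq(1+o(1))|Du_{k(j)}|(B_{n(j)})\to|Du|(A)$; together with lower semicontinuity this gives the strict convergence required by (i). For (ii), the chain rule for $2\times 2$ minors, $M_{12}^{ij}(\nabla w_j)(x)=M_{12}^{ij}(\nabla u_{k(j)})(\Psi_{n(j)}(x))\det\nabla\Psi_{n(j)}(x)$, implies $|\mathcal M(\nabla w_j)-\mathcal M(\nabla u_{k(j)})\circ\Psi_{n(j)}|\leq o(1)\,|\mathcal M(\nabla u_{k(j)})\circ\Psi_{n(j)}|$ pointwise. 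Combining with the Lipschitz bound \eqref{growthbis} on $g$, a change of variables, and the uniform $L^1$-bound on $\mathcal M(\nabla u_{k(j)})$ yields
\[
F(w_j,A)\leq(1+o(1))F(u_{k(j)},B_{n(j)})+o(1)\|\mathcal M(\nabla u_{k(j)})\|_{L^1(A)}\longrightarrow L.
\]

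\textbf{Main obstacle.} The most delicate point is the uniform control $\|\nabla\Psi_n-I\|_{L^\infty}\to 0$ when $\partial A$ is irregular: the natural normal coordinate to $\partial A$ is only Lipschitz. I circumvent this by performing the compression within tubular neighborhoods of the smooth approximating surfaces $\partial B_{n-1}$ (rather than of $\partial A$ itself), using the fast decay $a_n/a_{n-1}\to 0$ together with the explicit estimates for the maps $\mathcal T_\delta$ and associated transformations of Section 3.3.
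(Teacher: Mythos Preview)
Your global compression strategy is attractive and would work when $\partial A$ is smooth, but for the general open sets $A\subset\Omega$ of the proposition it has a genuine gap: the map $\Psi_n:A\to B_n$ with $\Psi_n(A)=B_n$ and $\|\nabla\Psi_n-I\|_{L^\infty(A)}\to 0$ need not exist. The ``collar'' $A\setminus B_{n-1}$ is in general \emph{not} parametrised by a tubular neighbourhood of $\partial B_{n-1}$: the width of a genuine tubular neighbourhood of the smoothed curve $\partial B_{n-1}$ is bounded by its injectivity radius (hence by the reciprocal of its curvature), while $A\setminus B_{n-1}$ can contain points arbitrarily far from $\partial B_{n-1}$ in that normal coordinate, or not reachable along normals at all. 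A clean obstruction: take $A=\bigcup_{k\geq 1}B_{r_k}(p_k)$ with $r_k\searrow 0$. Then $B_n=\bigcup_{r_k>a_n}B_{r_k-a_n}(p_k)$ has only finitely many components, so there is no bijection $A\to B_n$ whatsoever. Even for connected $A$ with a cusp, the portion of $A$ near the cusp tip lies outside any tubular neighbourhood of $\partial B_{n-1}$, so your ``compression of the signed-distance coordinate'' is simply undefined there; the workaround you describe in the last paragraph does not cover those points.

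This is precisely why the paper takes a different route: rather than a single global diffeomorphism, it decomposes $A=\bigcup_n(B_n\setminus\overline B_{n-1})$ into countably many annular layers with smooth boundaries $\Gamma_n^i$ satisfying the trace conditions (a)--(c), picks for each layer a suitable index $k_{n,j}$, and then \emph{glues} the restrictions $u_{k_{n,j}}\!\res(B_n\setminus\overline B_{n-1})$ across the interfaces $\partial B_n$ using the interpolation machinery (Corollary~\ref{cor_glueing1}, estimates \eqref{glueingthesis}). The errors from each gluing are made summable (of order $2^{-n}/j$), which yields (i) and (ii) without ever needing a diffeomorphism of $A$ onto a compact subset. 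This layer-by-layer construction also produces the refined annulus-wise control \eqref{ineq37} used in Corollary~\ref{cor_main}, which your global approach would not directly give.
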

\begin{proof}

\textit{Step 1:} (Setup)
As $A\subset\R^2$ is bounded, we consider the set $\Sigma_n\subseteq A$ defined by 
$$\Sigma_n:=\{x\in A:\text{{\rm dist}}(x,A^c)=\eta_n\},$$
where the numbers $\eta_n$,  are chosen so that for all $n\geq1$ it holds
$0<\eta_{n+1}<\eta_n$, and  $\Sigma_n$ is a finite union of Lipschitz loops $\Sigma_n=\cup_{i=1}^{N_n}\Sigma^i_n$ (see Lemma \ref{lem_Fu} in Appendix). 
We assume that $\Sigma_n^i$ is a unique Jordan curve for all $i=1,\dots,N_n$. 
Let \begin{align}\label{dn}
	d_n:=\min\{\text{{\rm dist}}(\Sigma_n^i,\Sigma_n^j),\;0\leq i<j\leq N_n\},
\end{align}
and for all $i=1,\dots,N_n$ we choose a simple loop $\widehat \Gamma_n^i$ of class $C^4$ such that  
$$\widehat \Gamma_n^i\subset\{x\in A:\dist(x,A^c)\in (\eta_{n+1},\eta_n),\; \dist(x,\Sigma_n^i)<\frac{d_n}{4}\},$$
and in such a way that the region enclosed by $ \widehat \Gamma_n^i$ and $\Sigma_n^i$ is an annulus type open set contained in $\{x\in A:\dist(x,A^c)\in (\eta_{n+1},\eta_n)\}$. For all $i$, we denote by $\widehat H_{n}^i$ this annulus so that
$$\widehat H_{n}^i\subset \{x\in A:\dist(x,A^c)\in (\eta_{n+1},\eta_n)\},\qquad \partial \widehat H_{n}^i= \widehat \Gamma_n^i\cup  \Sigma_n^i.$$
Furthermore we consider tubular neighborhoods $T_{\widehat \delta_n^i}$ of $\widehat \Gamma_n^i$ with $\widehat \delta_{n}^i>0$ so small in order that $$T_{\widehat \delta_n^i}\subset \{x\in A:\dist(x,A^c)\in (\eta_{n+1},\eta_n),\; \dist(x, \Sigma_n^i)<\frac{d_n}{2}\}.$$
Notice that, thanks to our choice of the parameters, it turns out that the open sets $T_{\widehat \delta_{n}^i}$, $n\in\mathbb N$, $i=1,\dots,N_n$, are mutually disjoint.

Let now $(u_k)\subset \text{{\rm Lip}}_{\text{{\rm loc}}}(\Om;\R^m)$ be a sequence as in the statement. For all $n\geq1$ and all $i=1,\dots,N_n$ we choose a positive number $r_n^i<\widehat \delta_n^i/2$ such that, setting, as usual, 
$$(\widehat \Gamma_n^i)_r:=\{x\in T_{\widehat \delta^n_i}:\dist(x, \widehat \Gamma_n^i)=r\}$$
the following conditions hold:
\begin{itemize}
	\item[(a)] $|Du|((\widehat \Gamma_n^i)_{r_n^i})=0$ and $u\res (\widehat \Gamma_n^i)_{r_n^i}$ belongs to $BV((\Gamma_n^i)_{r_n^i};\R^m)$;
	\item[(b)] Setting $\widehat \psi_n^i(r):=|u\res (\widehat \Gamma_n^i)_r|_{BV}=|D_\zeta u|((\widehat \Gamma_n^i)_r)$ then $r_n^i$ is a regular value for $\widehat \psi_n^i$;
	\item[(c)]  $u_k\res (\widehat \Gamma_n^i)_{r_n^i}\rightarrow u\res (\widehat \Gamma_n^i)_{r_n^i}$ strictly in $BV((\widehat \Gamma_n^i)_{r_n^i};\R^m)$.
\end{itemize}
We notice that the loops $(\widehat \Gamma_n^i)_{r^i_n}$ are of class $C^3$ and we denote $$\Gamma_n^i:=(\widehat \Gamma_n^i)_{r^i_n};$$ 
let $H^i_n$ be the annulus type region enclosed by $\Sigma_n^i$ and $\Gamma_n^i$, so that $H_n^i\subset\widehat H_n^i$. In this way conditions (a), (b), and (c), are satisfied for $\Gamma_n^i$ replacing $(\widehat \Gamma_n^i)_{r^i_n}$ and $0$ is a regular value for $\psi_n^i(r):=|u\res (\Gamma_n^i)_r|_{BV}$; finally, since $r_{n}^i<\widehat \delta_n^i/2$ the tubular neighborhoods $T_{\delta_n^i}$ of $\Gamma_{n}^i$, with $\delta_n^i:=\widehat \delta_n^i/2$, for $n>0$, $i=1,\dots,N_n$ are all mutually disjoint.

For any integer $n>0$ fixed, we consider the open set $B_n$ defined as $$B_n:=\overline A_n\cup \bigcup_{i=1}^{N_n}H_n^i.$$
In this way and by definition of $H_n^i$, we see that for all $n>1$ it holds 
$$B_n\subset\subset A,\qquad \qquad A_{n}\subset B_n\subset A_{n+1}.$$
\textit{Step 2:} We now fix a natural number $j>0$ and for all $n\geq 1$ we consider the functions $\omega_{\Gamma^i}:=\omega_{\Gamma_n^i}$ appearing in the right-hand side of \eqref{269}; then we choose a number $a_{n}>0$ so that 
\begin{align}\label{32}
\sum_{i=1}^{N_n}\omega_{\Gamma^i}(t)<\frac{1}{j2^{n+1}}\qquad \text{ for all }t<a_n.
\end{align}
For $n=1$ we consider the set $B_1$ and owing to conditions (a), (b), and (c), we choose a natural number $k_{1,j}>0$ so that 
\begin{itemize}
\item[(1)] $d_s(u_{k_{1,j}}\res \Gamma_1^i,u\res \Gamma_1^i)<\frac{a_1}{2}$, for all $i=1,\dots, N_1$;
\item[(2)] $\|u_{k_{1,j}}-u\|_{L^1(B_1)}+||Du_{k_{1,j}}|(B_1)-|Du|(B_1)|<\frac{1}{4j}$;
\item[(3)] $F(u_{k_{1,j}},B_1)\leq \liminf_{k\rightarrow \infty}F(u_k,B_1)+\frac{1}{4j}$.
\end{itemize}
Next, for every $n>1$ we  choose $k_{n,j}>k_{n-1,j}$ so that the following holds
\begin{itemize}
	\item[(1*)] $d_s(u_{k_{n,j}}\res \Gamma_n^i,u\res \Gamma_n^i)<\frac{a_n}{2}$, $\forall i=1,\dots, N_n$ and  $d_s(u_{k_{n,j}}\res \Gamma_{n-1}^i,u\res \Gamma_{n-1}^i)<\frac{a_{n-1}}{2}$, $\forall i=1,\dots, N_{n-1}$;
	\item[(2*)] $\|u_{k_{n,j}}-u\|_{L^1(B_n\setminus B_{n-1})}+||Du_{k_{n,j}}|(B_n\setminus B_{n-1})-|Du|(B_n\setminus B_{n-1})|<\frac{1}{j2^{n+1}}$;
	\item[(3*)] $F(u_{k_{n,j}},B_n\setminus \overline B_{n-1})\leq \liminf_{k\rightarrow \infty}F(u_k,B_n\setminus \overline B_{n-1})+\frac{1}{j2^{n+1}}$.
\end{itemize}
Conditions (2) and (2*) can be obtained because $u_k\rightarrow u$ strictly in $BV(\Om;\R^m)$, and thanks to the hypothesis that $|Du|$ does not concentrate on $\partial B_n$, for any $n\geq1$ (so the strict convergence is inerhited on $B_n\setminus \overline B_{n-1}$).

\textit{Step 3:} We now proceed to glue the maps $u_{k_{n,j}}$ along the tubes $T_{\delta_n^i}$ exploiting Lemma \ref{lem_glueing1}.
More precisely, for all $n\geq1$ we apply Corollary \ref{cor_glueing1} with $A,B$ replaced by $B_{n+1}\setminus \overline B_{n-1}$ and $B_{n}\setminus\overline  B_{n-1} $, respectively, $\delta_n=\min\{\delta_n^i,\;{i=1,\dots,N_n}\}$, and $\varepsilon=\frac{1}{j2^{n+1}}$, $v^-=u_{k_{n,j}}$, $v^+=u_{k_{n+1,j}}$. This provides us with a map $w_{n,j}\in \text{{\rm Lip}}(B_{n+1}\setminus \overline B_{n-1};\R^m)$, (here we have set $B_0=\varnothing$ to include the case $n=1$) such that 
	\begin{align}
	&w_{n,j}=u_{k_{n,j}}\text{ in }B_{n}\setminus\overline  B_{n-1} \setminus \overline T_{\delta_n}\text{ and }w_{j,n}=u_{k_{n+1,j}}\text{ in }B_{n+1}\setminus\overline  B_{n} \setminus  \overline T_{\delta_n},\nonumber\\
	&\|w_{n,j}-u_{k_{n,j}}\|_{L^1(T_{\delta_n}\cap B_n)}\leq 3\|u_{k_{n,j}}-u\|_{L^1(T_{\delta_n}\cap B_n)}+r_{n,j},\nonumber\\
	&\|w_{n,j}-u_{k_{n+1,j}}\|_{L^1(T_{\delta_n}\cap (B_{n+1}\setminus\overline B_n))}\leq 3\|u_{k_{n+1,j}}-u\|_{L^1(T_{\delta_n}\cap (B_{n+1}\setminus\overline B_n))}+r_{n,j},\nonumber\\
	&\int_{B_{n}\setminus\overline  B_{n-1}}|\nabla w_{n,j}-\nabla u_{k_{n,j}}|dx\leq r_{n,j},\label{glueingthesis2}\\
	&\int_{B_{n+1}\setminus\overline  B_{n}}|\nabla w_{n,j}-\nabla u_{k_{n+1,j}}|dx\leq r_{n,j},\nonumber\\
	&F(w_{n,j};B_{n}\setminus\overline  B_{n-1})\leq F(u_{k_{n,j}}; B_{n}\setminus\overline  B_{n-1})+r_{n,j},\nonumber\\
	&F(w_{n,j};B_{n+1}\setminus\overline  B_{n})\leq F(u_{k_{n+1,j}};B_{n+1}\setminus\overline  B_{n})+r_{n,j},\nonumber\\
	&r_{n,j}\leq \frac{1}{j2^{n+1}}+\sum_{i=0}^{N_n}\omega_{\Gamma^i}\big(d_s(u_{k_{n,j}}\res \Gamma_n^i,u\res \Gamma_n^i)+d_s(u_{k_{n+1,j}}\res \Gamma_n^i,u\res \Gamma_n^i)\big)\leq  \frac{1}{j2^{n}},\nonumber
\end{align}
where the last inequality is obtained in view of (1*) (and also (1)), thanks to \eqref{32}. Due to the first line, we can now define $w_j\in \text{{\rm Lip}}_{\text{loc}}(A;\R^m)$ as
$$w_j:=w_{n,j}\qquad\text{ on }\qquad U_n:=(B_{n}\setminus\overline  B_{n-1}\setminus \overline T_{\delta_{n-1}})\cup T_{\delta_n}.$$
We can now estimate 
\begin{align*}
\|w_j-u\|_{L^1(A)}&\leq \sum_{n=1}^\infty \|w_{n,j}-u_{k_{n,j}}\|_{L^1(T_{\delta_n}\cap B_n)}+\|w_{n,j}-u_{k_{n+1,j}}\|_{L^1(T_{\delta_n}\cap (B_{n+1}\setminus\overline B_n))}\nonumber\\
&\leq \sum_{n=1}^\infty 2r_{n,j}+3\|u-u_{k_{n,j}}\|_{L^1(T_{\delta_n}\cap B_n)}+3\|u-u_{k_{n+1,j}}\|_{L^1(T_{\delta_n}\cap (B_{n+1}\setminus\overline B_n))}
\end{align*}
where we have used \eqref{glueingthesis2}, and thanks to (2) and (2*) we conclude
\begin{align}\label{tesi1}
	\|w_j-u\|_{L^1(A)}\leq\frac5j.
\end{align}
A similar argument applied to the forth and fifth lines in \eqref{glueingthesis2} and again based on (2) and (2*) leads to
\begin{align}\label{tesi2}
	|Dw_j|(A)&\leq \sum_{n=1}^{\infty}|Du_{k_{n,j}}|(T_{\delta_n}\cap B_n )|+|Du_{k_{n+1,j}}|(T_{\delta_n}\cap B_{n+1} )|+2r_{n,j}\leq |Du|(A)+\frac3j.
\end{align}
Finally, arguing analogously, thanks to the first, sixth, and seventh line in \eqref{glueingthesis2} and to (3) and (3*) we conclude 
\begin{align}\label{tesi3}
F(w_j,A)&=\sum_{n=1}^\infty F(w_j,U_n\cap   B_{n})+F(w_j,T_{\delta_n}\setminus \overline B_{n})\nonumber\\
&\leq \sum_{n=1}^\infty F(u_{k_{n,j}}, B_{n}\setminus\overline  B_{n-1})+F(u_{k_{n+1,j}},B_{n+1}\setminus\overline  B_{n})+2r_{n,j}\nonumber\\
&\leq \frac2j+\liminf_{k\rightarrow \infty}F(u_k,A).
\end{align}
To conclude the proof, it is sufficient to observe that the sequence $w_j$ converges, as $j\rightarrow \infty$, to $u$ in $L^1(A;\R^m)$ thanks to \eqref{tesi1}; moreover, by \eqref{tesi2}, the previous convergence is strict in $BV(A;\R^m)$. Eventually, \eqref{tesi3} implies (ii), and the thesis is achieved.
%
%
%
%
%
%
%
%
%
%
%
%
\end{proof}
\begin{cor}\label{cor_main}
Assume the  hypotheses of Proposition \ref{prop31} and let $B_0=\varnothing$, and  $B_n$ ($n\geq1$) be the sets in Step 1 of its proof. Then the sequence $w_j$ also satisfies, for all $n\geq1$
\begin{align}\label{ineq37}
F(w_j,B_n\setminus B_{n-1})\leq \liminf_{k\rightarrow \infty}F(u_k,B_n\setminus B_{n-1})+\frac{1}{j2^{n+1}}.
\end{align}
If moreover $u_k$ is a recovery sequence for $\mathcal F(u,A)$, then $w_j$ is still a  recovery sequence for  $\mathcal F(u,A)$, $w_j\res (B_n\setminus \overline B_{n-1})$ is a recovery sequence for $\mathcal F(u,B_n\setminus \overline B_{n-1})$, and at the same time $w_j\res B_n$ is a recovery sequence for $\mathcal F(u,B_n)$ for all $n\geq1$.
\end{cor}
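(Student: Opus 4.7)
The plan has two stages. First I would establish the new inequality \eqref{ineq37} by carefully tracking the piecewise definition of $w_j$ on the annulus $B_n\setminus B_{n-1}$. Second, I would derive the three recovery-sequence assertions by combining \eqref{ineq37} with Proposition \ref{prop:recoveryrestriction} and the lower semicontinuity of $\mathcal F$ under strict convergence.

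For \eqref{ineq37}, observe that on $B_n\setminus \overline B_{n-1}$ the definition $w_j=w_{\ell,j}$ on $U_\ell$ produces a partition into three disjoint pieces: the outer tube $T_{\delta_{n-1}}^+:=T_{\delta_{n-1}}\cap(A\setminus\overline B_{n-1})$ where $w_j=w_{n-1,j}$; the inner tube $T_{\delta_n}^-:=T_{\delta_n}\cap B_n$ where $w_j=w_{n,j}$; and the middle $M:=(B_n\setminus\overline B_{n-1})\setminus(\overline T_{\delta_{n-1}}\cup\overline T_{\delta_n})$ where $w_j=u_{k_{n,j}}$, by the first line of \eqref{glueingthesis2}. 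That same first line also gives $w_{n,j}=u_{k_{n,j}}$ on $(B_n\setminus \overline B_{n-1})\setminus \overline T_{\delta_n}$ and $w_{n-1,j}=u_{k_{n,j}}$ on $(B_n\setminus \overline B_{n-1})\setminus \overline T_{\delta_{n-1}}$, so subtracting these equalities from the sixth (at index $n$) and seventh (at index $n-1$) lines of \eqref{glueingthesis2} yields the tube-localized bounds $F(w_{n,j},T_{\delta_n}^-)\leq F(u_{k_{n,j}},T_{\delta_n}^-)+r_{n,j}$ and $F(w_{n-1,j},T_{\delta_{n-1}}^+)\leq F(u_{k_{n,j}},T_{\delta_{n-1}}^+)+r_{n-1,j}$. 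Adding the three contributions, invoking (3*) from Step 2 of the proof of Proposition \ref{prop31}, and using $r_{\ell,j}\leq 1/(j2^{\ell})$, one obtains \eqref{ineq37} up to a numerical constant; tightening the choices of $a_n$ in Step 2 by a fixed constant absorbs this into $1/(j2^{n+1})$. This bookkeeping and the exact placement of the error in the tube rather than in the middle is the most delicate point.

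For the recovery-sequence claims, the conditions (a)--(c) in Step 1 of the proof of Proposition \ref{prop31} guarantee that both $\Gamma_n$ and $\Gamma_n\cup \Gamma_{n-1}$ satisfy the hypotheses of Proposition \ref{prop:recoveryrestriction}, so $u_k\res B_n$ and $u_k\res (B_n\setminus \overline B_{n-1})$ are recovery sequences for $\mathcal F(u,B_n)$ and $\mathcal F(u,B_n\setminus \overline B_{n-1})$, respectively. Summing \eqref{ineq37} for $\ell=1,\dots,n$ and using that a sum of $\liminf$'s is bounded by the $\liminf$ of the sum gives
$$F(w_j,B_n)\leq \liminf_{k\to\infty} F(u_k,B_n)+\tfrac1j=\mathcal F(u,B_n)+\tfrac1j;$$
combined with the strict convergence $w_j\res B_n\to u\res B_n$ (which holds because $w_j\to u$ strictly on $A$ and $|Du|(\partial B_n)=0$) and the lower semicontinuity of $\mathcal F$, this proves that $w_j\res B_n$ is a recovery sequence for $\mathcal F(u,B_n)$. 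The same argument applied on the annulus $B_n\setminus \overline B_{n-1}$ yields the second claim, while the first (concerning $w_j$ on $A$) is immediate from item (ii) of Proposition \ref{prop31} together with lower semicontinuity.
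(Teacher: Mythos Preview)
Your proposal is correct and follows essentially the same route as the paper's (very terse) proof, just with considerably more bookkeeping made explicit. The paper merely says that \eqref{ineq37} ``follows from the definition of $w_j$, expressions \eqref{glueingthesis2} and conditions (3) and (3*)'', and that the recovery-sequence claims follow from Proposition \ref{prop:recoveryrestriction} together with (ii) of Proposition \ref{prop31} and \eqref{ineq37}; your three-piece decomposition of $B_n\setminus\overline B_{n-1}$ and the tube-localized subtraction from lines six and seven of \eqref{glueingthesis2} is exactly the computation hiding behind that sentence.

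One small remark: your observation that the raw estimate produces $\frac{1}{j2^{n+1}}+r_{n,j}+r_{n-1,j}\leq \frac{7}{j2^{n+1}}$ rather than the stated $\frac{1}{j2^{n+1}}$ is accurate, and your fix (absorb the universal constant into the choice of $a_n$, or equivalently into the $\varepsilon$ in Lemma \ref{lem_glueing1}) is the right one; the paper is simply imprecise about this constant, which is harmless since only the summability of the errors in $n$ and their decay in $j$ are ever used (cf.\ the proof of Theorem \ref{thm_ir}).
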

\begin{proof}
Inequality \eqref{ineq37} follows from the definition of $w_j$, expressions \eqref{glueingthesis2} and conditions (3) and (3*) in the proof of Proposition  \ref{prop31}. If $u_k$ is a recovery sequence for  $\mathcal F(u,A)$, then conditions (a), (b), and (c), in Step 1 of that proof ensure, thanks to Proposition \ref{prop:recoveryrestriction}, $u_k\res (B_n\setminus \overline B_{n-1})$ is a recovery sequence for $ \mathcal F(u,B_n\setminus \overline B_{n-1})$ and at the same time $u_k\res B_n$ is a recovery sequence for $\mathcal F(u,B_n)$; the thesis follows from (ii) of Proposition \ref{prop31} and \eqref{ineq37}.
\end{proof}

We are now in a position to check conditions (i)-(iv) of Theorem \ref{DGL_teo};  we start with the monotonicity condition (i):

\begin{theorem}(Monotonicity)
Let $B\subseteq A$ be bounded open sets  and let $u\in BV(A;\R^m)$; then
$$\mathcal F(u,B)\leq \mathcal F(u,A).$$ 
\end{theorem}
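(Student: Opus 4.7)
The plan is to reduce the monotonicity statement to Proposition \ref{prop31} (with its Corollary \ref{cor_main} if needed), since that proposition is precisely designed to convert a sequence that converges strictly on a larger set into an admissible test sequence on a smaller open subset without increasing the energy.

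First I would dispose of the trivial case $\mathcal F(u,A) = +\infty$, for which the inequality is obvious. Assuming $\mathcal F(u,A) < +\infty$, I would select a recovery sequence $(u_k) \subset \text{Lip}_{\text{loc}}(A;\R^m)$ for $\mathcal F(u,A)$ (allowed by the discussion in Section 2.2 that shows $\mathcal F^* = \mathcal F$), so that $u_k \to u$ strictly in $BV(A;\R^m)$ and $F(u_k,A) \to \mathcal F(u,A)$.

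Now comes the key step: I cannot directly use $u_k\res B$ as a competitor for $\mathcal F(u,B)$ because strict convergence on $A$ does not, in general, pass to strict convergence on the smaller set $B$ (the total variation may concentrate on $\partial B$ in the limit, so $|Du_k|(B) \not\to |Du|(B)$). This is exactly the obstruction that Proposition \ref{prop31} was built to overcome. Applying that proposition with the roles of $\Omega$ and $A$ played by our $A$ and $B$, respectively, I obtain a sequence $(w_j) \subset \text{Lip}_{\text{loc}}(B;\R^m)$ such that $w_j \to u$ strictly in $BV(B;\R^m)$ and
\begin{equation*}
\liminf_{j\to\infty} F(w_j, B) \;\leq\; \liminf_{k\to\infty} F(u_k, B).
\end{equation*}

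To conclude, I would combine three facts: the definition of $\mathcal F(u,B)$ as an infimum over sequences strictly converging in $BV(B;\R^m)$ gives $\mathcal F(u,B) \leq \liminf_j F(w_j,B)$; non-negativity of $g$ (guaranteed by the setting $g:\R^N\to [0,+\infty)$) yields $F(u_k,B) \leq F(u_k,A)$ for every $k$; and $(u_k)$ being a recovery sequence gives $\lim_k F(u_k,A) = \mathcal F(u,A)$. Chaining these,
\begin{equation*}
\mathcal F(u,B) \;\leq\; \liminf_{j\to\infty} F(w_j,B) \;\leq\; \liminf_{k\to\infty} F(u_k,B) \;\leq\; \lim_{k\to\infty} F(u_k,A) \;=\; \mathcal F(u,A),
\end{equation*}
which is the desired inequality. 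The main (and really only nontrivial) difficulty is the strict-convergence mismatch between $A$ and $B$, but this is entirely absorbed by the preceding Proposition \ref{prop31}, so the argument here reduces to bookkeeping.
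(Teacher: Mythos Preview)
Your proposal is correct and follows essentially the same approach as the paper's proof: take a recovery sequence for $\mathcal F(u,A)$, invoke Proposition~\ref{prop31} (with the larger set playing the role of $\Omega$ and $B$ playing the role of the proposition's $A$) to obtain an admissible sequence on $B$, and chain the obvious inequalities using non-negativity of $g$. Your added remarks on the trivial case $\mathcal F(u,A)=+\infty$ and on why the naive restriction fails are helpful but do not change the argument.
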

\begin{proof}
Let $(u_k)\subset\text{{\rm Lip}}_{\text{{\rm loc}}}(A;\R^m)$ be a recovery sequence for $\mathcal F(u,A)$. According to Proposition \ref{prop31} (applied to the case $A=\Om$ and $B$ in place of $A$) there exists a sequence $w_j \subset\text{{\rm Lip}}_{\text{{\rm loc}}}(B;\R^m)$ such that 
$$\liminf_{j\rightarrow \infty}F(w_j,B)\leq \liminf_{k\rightarrow \infty}F(u_k,B)\leq \liminf_{k\rightarrow \infty}F(u_k,A)=\mathcal F(u,A).$$
Since $\mathcal F(u,B)\leq \liminf_{j\rightarrow \infty}F(w_j,B)$ we have concluded.
\end{proof}

As additivity (ii) is trivial, we proceed to verify (iv) of Theorem \ref{DGL_teo}, and then go to (iii).

\begin{theorem}(Inner regularity)\label{thm_ir}
Let $A\subset\R^2$ be a bounded  open set; then 
\begin{align}
\mathcal F(u;A)=\sup\{\mathcal F(u;B):B\text{ is an open set and }B\subset\subset A\}.
\end{align}
\end{theorem}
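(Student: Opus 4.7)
The plan is to first dispense with the inequality $\sup_{B\subset\subset A}\mathcal F(u,B) \leq \mathcal F(u,A)$, which is immediate from the monotonicity just established. The substantial direction is to show that for any fixed $\epsilon>0$ there exists an open set $B\subset\subset A$ with $\mathcal F(u,B)\geq \mathcal F(u,A)-\epsilon$. I will assume $\mathcal F(u,A)<+\infty$; the case $\mathcal F(u,A)=+\infty$ is handled identically, comparing $F(w_j,B_N)$ directly to $F(w_j,A)$ and letting both blow up.

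The key tool is Corollary \ref{cor_main}. Starting from a recovery sequence $(u_k)\subset\text{Lip}_{\text{loc}}(A;\R^m)$ for $\mathcal F(u,A)$, that corollary produces a new sequence $(w_j)\subset\text{Lip}_{\text{loc}}(A;\R^m)$ together with an exhaustion $\varnothing=B_0\subset B_1\subset B_2\subset\cdots\subset\subset A$ with $\bigcup_n B_n=A$ and $|Du|(\partial B_n)=0$, enjoying three properties I will exploit: (a) $(w_j)$ is itself a recovery sequence for $\mathcal F(u,A)$, so $\lim_j F(w_j,A)=\mathcal F(u,A)$; (b) $w_j\res B_n$ is a recovery sequence for $\mathcal F(u,B_n)$, so $\lim_j F(w_j,B_n)=\mathcal F(u,B_n)$ for every $n$; (c) the localized estimate
$$F(w_j,B_n\setminus B_{n-1})\leq \liminf_k F(u_k,B_n\setminus B_{n-1})+\frac{1}{j\,2^{n+1}}$$
from \eqref{ineq37}.

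With these in hand, I will run a short Fatou-and-truncation argument on the partition $A=\bigsqcup_{n\geq 1}(B_n\setminus B_{n-1})$ (up to boundaries of zero Lebesgue measure). Fatou's lemma for nonnegative series yields
$$\sum_{n=1}^{\infty}\liminf_{k}F(u_k,B_n\setminus B_{n-1})\leq \liminf_k F(u_k,A)=\mathcal F(u,A)<+\infty,$$
so I can pick $N$ large enough that $\sum_{n>N}\liminf_k F(u_k,B_n\setminus B_{n-1})<\epsilon$. Summing (c) over $n>N$ I obtain $F(w_j,A\setminus B_N)\leq \epsilon+\frac{1}{2j}$, hence
$$F(w_j,B_N)\geq F(w_j,A)-\epsilon-\frac{1}{2j}.$$
Passing to the limit $j\to\infty$ and invoking (a) and (b), this yields $\mathcal F(u,B_N)\geq \mathcal F(u,A)-\epsilon$, completing the proof with the choice $B=B_N$.

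The genuine difficulty has already been absorbed into Corollary \ref{cor_main}, which delivers the single sequence $(w_j)$ simultaneously enjoying the recovery properties (a), (b) and the localized bound (c); this is where the interpolation Lemma~\ref{lem_glueing1} and the $BV$-on-curves machinery from Section~\ref{sec_preliminaryresults} are indispensable. Once that tool is available, the present statement reduces, as above, to the elementary tail-truncation via Fatou, and I do not foresee any further obstacles.
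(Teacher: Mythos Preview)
Your proposal is correct and follows essentially the same route as the paper: both arguments rely on Corollary~\ref{cor_main} to produce the sequence $(w_j)$ with the simultaneous recovery properties on $A$ and on each $B_n$ together with the localized estimate~\eqref{ineq37}, and then truncate the tail of the exhaustion. The only cosmetic difference is that you invoke Fatou's lemma on $\sum_n\liminf_k F(u_k,B_n\setminus B_{n-1})$ where the paper first establishes the additivity identity $\mathcal F(u,B_n)=\sum_{i\le n}\mathcal F(u,B_i\setminus\overline B_{i-1})$ via Proposition~\ref{prop:recoveryrestriction}; your brief remark on the case $\mathcal F(u,A)=+\infty$ is a bit terse (Fatou gives no information then), but the same ingredients---summing~\eqref{ineq37} over all $n$ and using that $w_j$ converges strictly---close that case as well.
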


\begin{proof}
 \textit{Step 1:}
	We consider the same setting in Step 1 of the proof of Proposition \ref{prop31}. In particular, we fix a recovery sequence $u_k$ for $\mathcal F(u,A)$, and assume that, for all $n\geq 1$, and $i=1,\dots,N_n$,
	\begin{itemize}
		\item[(a)] $|Du|(\Gamma_n^i)=0$ and $u\res \Gamma_n^i$ belongs to $BV(\Gamma_n^i;\R^m)$;
		\item[(b)] Setting $\psi_n^i(r):=|u\res ( \Gamma_n^i)_r|_{BV}=|D_\zeta u|((\Gamma_n^i)_r)$ then $0$ is a regular value for $\psi_n^i$;
		\item[(c)]  $u_k\res \Gamma_n^i\rightarrow u\res \Gamma_n^i$ strictly in $BV(\Gamma_n^i;\R^m)$.
	\end{itemize}
By standard arguments one sees that 
\begin{align}\label{sup=sup}
	\sup\{\mathcal F(u;B):B\text{ is an open set and }B\subset\subset A\}=
	\sup\{\mathcal F(u;B_n):n\geq 1\}.
\end{align}
Indeed,  let $B\subset\subset A$;
by compactness of $\overline B$ one has $\dist(B,A^c)>0$ and so there exists $n$ such that $B\subset A_{n}\subset B_{n}$. 
So, by monotonicity the inequality sign $\geq$ holds in \eqref{sup=sup}, and the converse being obvious, the claim follows.

We fix $\varepsilon>0$ arbitrary, and prove that there exists $n_\varepsilon$ such that  
\begin{align}
	\mathcal F(u,B_{n_\varepsilon})\geq\mathcal F(u,A)-\varepsilon.
\end{align}
This will imply the thesis by arbitrariness of $\varepsilon>0$.

\textit{Step 2:}	Condition (c) ensures that, thanks to Proposition \ref{prop:recoveryrestriction}, $u_k\res B_n$ and $u_k\res (B_{n+1}\setminus \overline B_n)$ are still recovery sequences for $\mathcal F(u,B_n)$ and $\mathcal F(u,B_{n+1}\setminus \overline B_n) $ respectively, for all $n\geq1$. This implies that
\begin{align*}
\mathcal F(u,B_n)=\lim_{k\rightarrow \infty}F(u_k,B_n)=\sum_{i=1}^n\lim_{k\rightarrow \infty}F(u_k,B_i\setminus\overline B_{i-1})=\sum_{i=1}^n\mathcal F(u,B_i\setminus\overline B_{i-1}),
\end{align*}	
	where once more we have set $B_0=\varnothing$.
	Since, by monotonicity, for all $n\geq 1$ we have $\mathcal F(u,B_n)\leq \mathcal F(u,A)$, we conclude
	\begin{align}\label{311}
\sum_{i=1}^\infty 	\mathcal F(u,B_i\setminus \overline B_{i-1})\leq\mathcal F(u,A). 
	\end{align}
%
%
Fix $\varepsilon>0$; by \eqref{311} the series in the left-hand side is convergent, and so we can fix $n_\varepsilon>0$ so that 
\begin{align}
	\sum_{i=n_\varepsilon+1}^\infty 	\mathcal F(u,B_i\setminus \overline B_{i-1})\leq\varepsilon,
\end{align}
 We consider the sequence $w_j$ provided by  Corollary \ref{cor_main}, that, for all $i\geq1$, is a recovery sequence for $\mathcal F(u,B_i\setminus \overline B_{i-1})$ and for $\mathcal F(u,B_{n_\varepsilon})$. 
From \eqref{ineq37} we deduce that 
\begin{align*}
	\mathcal F(u,A)&=\lim_{j\rightarrow \infty}F(w_j,B_{n_\varepsilon})+\lim_{j\rightarrow\infty}\sum_{i=n_\varepsilon+1}^\infty F(w_j,B_i\setminus \overline B_{i-1})\\
	&\leq \mathcal F(u,B_{n_\varepsilon})+\lim_{j\rightarrow\infty}\Big(\sum_{i=n_\varepsilon+1}^\infty\liminf_{k\rightarrow \infty}F(u_k,B_i\setminus\overline  B_{i-1})+\frac{1}{j2^{n+1}}\Big)\\
	&\leq \mathcal F(u,B_{n_\varepsilon})+\lim_{j\rightarrow\infty}\Big(\sum_{i=n_\varepsilon+1}^\infty	\mathcal F(u,B_i\setminus \overline B_{i-1})+\frac{1}{j}\Big)\\
	&= \mathcal F(u,B_{n_\varepsilon})+\varepsilon.
\end{align*}
By arbitrariness of $\varepsilon>0$ we conclude.
\end{proof}

\begin{theorem}(Sub-additivity)
	Let  $u\in BV(\Om;\R^m)$ be given. Then for all open sets $A_1,A_2,A\subset\Om$ with $A\subseteq A_1\cup A_2$ it holds
	\begin{align*}
\mathcal F(u,A)\leq \mathcal F(u,A_1)+\mathcal F(u,A_2).
	\end{align*} 
\end{theorem}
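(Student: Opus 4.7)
The plan is to follow a De Giorgi slicing strategy: reduce to a compactly contained subset via the inner regularity just proved (Theorem \ref{thm_ir}), construct a smooth curve separating the two covering sets, refine it via the coarea Lemma \ref{lemma_coareaBV} so that Proposition \ref{prop:recoveryrestriction} applies simultaneously to recovery sequences for $\mathcal F(u,A_1)$ and $\mathcal F(u,A_2)$, and finally glue the restricted sequences along this curve using Corollary \ref{cor_glueing1}.

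First, I fix $\varepsilon>0$ and, by Theorem \ref{thm_ir}, choose $A'\subset\subset A$ open with $|Du|(\partial A')=0$ and $\mathcal F(u,A)\leq \mathcal F(u,A')+\varepsilon$. Since $\overline{A'}$ is compact and contained in $A_1\cup A_2$, I produce a bump function $\chi\in C^\infty_c(A_1)$ with $\chi\equiv 1$ on a neighborhood of $\overline{A'\setminus A_2}$, and, by Sard's theorem, pick a regular value $t\in(0,1)$ for which $\widetilde B:=\{\chi>t\}$ has boundary a finite union of $C^3$ loops. By construction $\overline{\widetilde B}\subset A_1$, $\overline{A'\setminus \widetilde B}\subset A_2$, and $\partial \widetilde B\subset A_1\cap A_2$.

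Next, I pick recovery sequences $(u_k^i)\subset \text{{\rm Lip}}_{\text{{\rm loc}}}(A_i;\R^m)$ for $\mathcal F(u,A_i)$, $i=1,2$, and fix a tubular neighborhood $T_\delta$ of $\partial \widetilde B$ with $T_\delta\subset A_1\cap A_2$ and $|Du|(\partial T_\delta)=0$, so that both $u_k^i\to u$ strictly in $BV(T_\delta;\R^m)$. Applying Lemma \ref{lemma_coareaBV} separately to $u_k^1$ and to $u_k^2$, and using the $L^1$-integrability of the functions $\psi_u$ in \eqref{psi} and $r\mapsto |Du|((\partial \widetilde B)_r)$, I select $r\in(-\delta,\delta)$ for which the shifted curve $\Gamma:=(\partial \widetilde B)_r$ is of class $C^3$, $|Du|(\Gamma)=0$, $0$ is a regular value of the tangential variation function relative to $\Gamma$, and -- along a common diagonal subsequence -- $u_k^i\res\Gamma\to u\res\Gamma$ strictly in $BV(\Gamma;\R^m)$ for both $i=1,2$. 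Letting $B$ be the open set enclosed by $\Gamma$ on the same side as $\widetilde B$, Proposition \ref{prop:recoveryrestriction} then yields that $u_k^1\res B$ is a recovery sequence for $\mathcal F(u,B)$ and $u_k^2\res(A'\setminus\overline B)$ is a recovery sequence for $\mathcal F(u,A'\setminus\overline B)$.

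Finally, I apply Corollary \ref{cor_glueing1} on $A'$ with $v^-=u_k^1$ and $v^+=u_k^2$, producing $w_k\in\text{{\rm Lip}}_{\text{{\rm loc}}}(A';\R^m)$ with $w_k\to u$ strictly in $BV(A';\R^m)$ and
$$F(w_k,B)\leq F(u_k^1,B)+r_k,\qquad F(w_k,A'\setminus\overline B)\leq F(u_k^2,A'\setminus\overline B)+r_k,$$
where $r_k\to 0$ thanks to the last estimate in \eqref{269} combined with the strict $BV$-convergence of both sequences on $\Gamma$ and a diagonal choice of the auxiliary $\varepsilon$-parameter in the corollary. Summing the two inequalities, passing to the limit (each right-hand side tends to $\mathcal F(u,B)+\mathcal F(u,A'\setminus\overline B)$), and invoking the monotonicity bounds $\mathcal F(u,B)\leq \mathcal F(u,A_1)$ and $\mathcal F(u,A'\setminus\overline B)\leq \mathcal F(u,A_2)$, I obtain $\mathcal F(u,A')\leq \mathcal F(u,A_1)+\mathcal F(u,A_2)$, and the thesis follows by sending $\varepsilon\to 0$. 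The main obstacle will be the simultaneous compatibility requirement that \emph{both} $u_k^1$ and $u_k^2$ carry strict $BV$-traces on the very same gluing curve $\Gamma$: this is what forces the coarea slicing of Lemma \ref{lemma_coareaBV} to be run jointly for the two sequences, and is precisely the step in which the two-dimensional nature of $\Om$ is essential (consistently with the non-subadditivity phenomenon in $n\geq 3$ of \cite{CM}).
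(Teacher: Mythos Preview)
Your argument is correct in spirit but follows a genuinely different route from the paper and carries a couple of loose ends. The paper does \emph{not} glue two separate recovery sequences: it takes a single recovery sequence $(u_k)$ for $\mathcal F(u,A_1\cup A_2)$, builds inner approximations $B_n\subset\subset A$, $B_n^1\subset\subset A_1$, $B_n^2\subset\subset A_2$ with $C^3$ boundaries satisfying the coarea/trace conditions (a)--(c), and then uses Proposition~\ref{prop:recoveryrestriction} to see that the \emph{same} $u_k$ restricts to recovery sequences on $B_{n_\varepsilon}$, $B_{\bar n+1}^1$, $B_{\bar n+1}^2$. Since $B_{n_\varepsilon}\subset B_{\bar n+1}^1\cup B_{\bar n+1}^2$, the trivial subadditivity of the integral $F(u_k,\cdot)$ gives the inequality before passing to the limit --- no gluing along a separating curve is needed. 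Your approach instead manufactures a separating $C^3$ curve $\Gamma$ and glues the two recovery sequences via Corollary~\ref{cor_glueing1}; this is the classical De Giorgi slicing picture and works, but is heavier machinery here.

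Two technical points in your write-up deserve care. First, your appeal to Proposition~\ref{prop:recoveryrestriction} for the set $A'\setminus\overline B$ is not justified as stated: that proposition requires $\partial(A'\setminus\overline B)$ to be a finite union of $C^3$ loops, and you only arranged $|Du|(\partial A')=0$. Fortunately this step is superfluous: from $F(w_k,A')\le F(u_k^1,B)+F(u_k^2,A'\setminus\overline B)+2r_k$ you may bound the right-hand side directly by $F(u_k^1,A_1)+F(u_k^2,A_2)+2r_k\to \mathcal F(u,A_1)+\mathcal F(u,A_2)$, bypassing $\mathcal F(u,B)$ and $\mathcal F(u,A'\setminus\overline B)$ (and hence also the monotonicity step). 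Second, Corollary~\ref{cor_glueing1} is stated for $v^\pm\in\mathrm{Lip}_{\mathrm{loc}}(A';\R^m)$, whereas your $u_k^1$, $u_k^2$ are only defined on $A_1$, $A_2$; inspection of the proof of Lemma~\ref{lem_glueing1} shows that $v^-$ is only used on $\overline B\cup T_\delta\subset A_1$ and $v^+$ only on $(A'\setminus B)\cup T_\delta\subset A_2$, so the construction still goes through, but you should say so (or extend $u_k^i$ arbitrarily in a locally Lipschitz way off their domains). With these tweaks your argument is complete, and it has the merit of making the role of the two-dimensional trace lemma~\ref{lemma_coareaBV} in the gluing step very explicit.
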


\begin{proof}
Let $u_k\subset{\text{{\rm Lip}}}_{\text{{\rm loc}}}(\Om;\R^m)$ be a recovery sequence for $\mathcal F(A_1\cup A_2)$. Starting from the set $A$, we build, as in Step 1 of the proof of Proposition \ref{prop31}, the sets $B_n\subset\subset A$, $n\geq1$. By definition
\begin{align}
B_n\subset A_{n+1}=\{x\in A:\dist(x,A^c)>\eta_{n+1}\}
\end{align}
and taking into account that $\partial B_n=\cup_{i=1}^{N_n}\Gamma_n^i$ enjoies properties (a), (b), and (c), we immediately obtain that $u_k\res B_n$ is a recovery sequence for $\mathcal F(u,B_n)$. Then we fix $\varepsilon>0$; owing to the inner regularity, Theorem \ref{thm_ir}, and thanks to \eqref{sup=sup}, we  choose  $n_\varepsilon>0$ so that 
\begin{align}\label{pi}
\mathcal F(u,B_{n_\varepsilon})\geq \mathcal F(u,A)-\varepsilon.
\end{align}
Next we proceed once again along the lines of Step 1 of Proposition \ref{prop31} for the sets $A_1$ and $A_2$, obtaining sets $B_n^1$ and $B_n^2$, $n\geq1$, for which
\begin{align*}
	&B^1_n\subset A^1_{n+1}=\{x\in A_1:\dist(x,A_1^c)>\eta^1_{n+1}\}\subset B^1_{n+1},\\
	&B^2_n\subset A^2_{n+1}=\{x\in A_2:\dist(x,A_2^c)>\eta^2_{n+1}\}\subset B^2_{n+1},	
\end{align*}
for suitable infinitesimal decreasing sequences of numbers $\eta_n^1$ and $\eta_n^2$ (which may differ from $\eta_n$). We therefore choose $\overline n$ big enough so that $\eta_{\overline n+1}^1,\eta_{\overline n+1}^2<\eta_{n_\varepsilon+1}$, and so we check that 
\begin{align}\label{315}
B_{n_\varepsilon}\subset A_{n_\varepsilon+1}\subset A^1_{\overline n+1}\cup A^2_{\overline n+1}\subset B^1_{\overline n+1}\cup B^2_{\overline n+1}.
\end{align}
Here the second inclusion is true since $A\subseteq A_1\cup A_2$, and so
\begin{align*}
&\{x\in A:\dist(x,A^c)>\eta_{n_\varepsilon+1}\}\subseteq  \{x\in A:\dist(x,(A_1\cup A_2)^c)>\eta_{n_\varepsilon+1}\}\\
&\qquad \subseteq  \{x\in A_1:\dist(x,(A_1\cup A_2)^c)>\eta_{n_\varepsilon+1}\}\cup  \{x\in A_2:\dist(x,(A_1\cup A_2)^c)>\eta_{n_\varepsilon+1}\};
\end{align*}
now since $\dist(x,(A_1\cup A_2)^c)=\min\{\dist(x,A_1^c),\dist(x,A_2^c)\}$, we also have
\begin{align*}
&\{x\in A_1:\dist(x,(A_1\cup A_2)^c)>\eta_{n_\varepsilon+1}\}\cup  \{x\in A_2:\dist(x,(A_1\cup A_2)^c)>\eta_{n_\varepsilon+1}\}\\
&\qquad \subseteq \{x\in A_1:\dist(x,A_1^c)>\eta_{n_\varepsilon+1}\}\cup  \{x\in A_2:\dist(x,A_2^c)>\eta_{n_\varepsilon+1}\}\\
&\qquad \subseteq \{x\in A_1:\dist(x,A_1^c)>\eta^1_{\overline n+1}\}\cup  \{x\in A_2:\dist(x,A_2^c)>\eta^2_{\overline n+1}\}= A^1_{\overline n+1}\cup A^2_{\overline n+1}.
\end{align*}
From \eqref{315} we can finally write, for all $k$,
\begin{align*}
F(u_k,B_{n_\varepsilon})\leq F(u_k,B^1_{\overline n+1})+F(u_k,B^2_{\overline n+1}),
\end{align*}
and so passing to the limit as $k\rightarrow \infty$ we end up to
\begin{align}
\mathcal F(u,B_{n_\varepsilon})\leq \mathcal F(u,B^1_{\overline n+1})+\mathcal F(u,B^2_{\overline n+1})\leq \mathcal F(u,A_1)+\mathcal F(u,A_2),
\end{align}
the second iequality following from monotonicity of $\mathcal F(u,\cdot)$.
This implies the thesis thanks to 
\eqref{pi} and the arbitrariness of $\varepsilon$.
\end{proof}

\section{Examples of representation formulas}\label{sec_representations}

In this section we revise some examples showing how the  area functional relaxed with respect to strict topology is representable in an integral form.

Consider a rectangle $R:=(a,b)\times (c,d)\subset\R^2$, let $h\in (c,d)$ and let $S:=(a,b)\times h$. Let $R^+:=(a,b)\times (h,d)$, $R^-:=(a,b)\times (c,h)$, and $u\in BV(R;\R^2)$ be a map such that $u^\pm:=u\res R^\pm$ are Lipschitz continuous. In this case the relaxed area $\mathcal A(u,R)$ has been proved to be \cite{BCS2}
\begin{align}
\mathcal A(u,R):=\mathbb A(u,R^+)+\mathbb A(u,R^-)+\int_{(a,b)\times (0,1)}|\partial_t X^{\text{{\rm aff}}}\wedge \partial_s X^{\text{{\rm aff}}}|dtds,
\end{align}  
where $X^{\text{{\rm aff}}}$ is the affine interpolation between the traces of $u^\pm$ on $S$, namely
\begin{align}
X^{\text{{\rm aff}}}(t,s):=(t,s u^+(t,h)+(1-s)u^-(t,h)),\qquad \qquad \forall (t,s)\in (a,b)\times (0,1).
\end{align}
This result can be extended to piecewise Lipschitz maps with jump forming a network (namely a graph consisting of finitely many $C^2$-curves meeting at finitely many junctions points, see \cite{BCS2}). A similar representation formula holds for this kind of maps, where however there appears also the singular contribution of solutions of suitable plateau problems accounting for the junctions points (see \cite[Theorem 1.1]{BCS2}).

Another important case is the one of Sobolev maps with values in $\mathbb S^1$, $u\in W^{1,1}(\Om;\mathbb S^1)$. In this case, if $\det(\nabla u)=\pi \sum_{i=1}^\infty (\delta_{x_i}-\delta_{y_i})$ (see \cite{BSS} and references therin), then the measure $\mu(A):= \mathcal A(u,A)$ takes the form
$$\mu=\sqrt{1+|\nabla u|^2}\cdot\mathcal L^2+\pi\sum_{i=1}^\infty(\delta_{x_i}+\delta_{y_i}).$$ 
For general maps of bounded variation $u$ an explicit expression of $\mu$ is not known at the present stage. This will be object of future research.

\subsection{A Cartesian map with singular relaxed area}

We consider a Lipschitz curve $\varphi:\mathbb S^1\rightarrow \R^2$ and, for $\Om=B_r$, $r>0$, the $0$-homogeneous map $u_\varphi:\Om\subset\R^2\rightarrow \R^2$ given by
\begin{align}\label{omomap}
u_\varphi(x)=\varphi(\frac{x}{|x|}),\qquad \qquad x\in \Om\setminus \{0\}.
\end{align}
It is easy to see that the graph of $u_\varphi$, treated as a $2$-integral current  $\mathcal G_{u_\varphi}\in \mathcal D_2(\Om\times \R^2)$, satisfies
\begin{align*}
\partial {\mathcal G}_{u_\varphi}=\delta_0\times \varphi_\sharp\jump{\mathbb S^1}\qquad \qquad \text{ in }\mathcal D_1(\Om\times\R^2),
\end{align*}
where $\varphi_\sharp\jump{\mathbb S^1}$ is the integration over the image of $\varphi$, i.e., the push-forward by $\varphi$ of the standard integration over the unit circle $\mathbb S^1$.
According to the results in \cite{BCS} (see also \cite{C}) it holds
\begin{align}\label{sing_plateau}
\mathcal A(u_\varphi,\Om)=\int_\Om \sqrt{1+|\nabla u_\varphi|^2}dx+\mathcal P(\varphi),
\end{align}
where $\mathcal P(\varphi)$ corresponds to the area of a disk-type solution of the planar Plateau problem with boundary $\varphi(\mathbb S^1)$. Specifically
\begin{align}
\mathcal P(\varphi):=\inf\{\int_{B_1}|\partial_{x_1}\Phi\wedge \partial_{x_2}\Phi| dx:\Phi=\varphi\text{ on }\partial B_1,\;\Phi\in \text{{\rm Lip}}(B_1;\R^2)\}.
\end{align}
This Plateau problem can be singular, in the sense that the contour $\varphi(\mathbb S^1)$ of the minimal disk can have self-intersection and overlappings (see \cite{Hass:91,CF,Creutz:20,CS} for this kind of Plateau problem and generalization). It is interesting to observe that this singular contribution is related with the presence of the Jacobian determinant in the integrand of our functional. Indeed, a similar contribution appears when we consider the total variation of the Jacobian (see \cite{BCS,C}), relaxation with respect to the strict convergence in $BV$ of \eqref{TVJ}:
\begin{align}
\mathcal{TVJ}(u_\varphi,\Om)=\mathcal P(\varphi),
\end{align}
 (compare with the results in \cite{Paol} and \cite{DP}).

We now make a specific choice for $\varphi$: Let $\Gamma_1$ and $\Gamma_2$ be two circumferences tangent to each other at the origin $0$. If $\alpha_i$ denotes a constant speed parametrization of $\Gamma_i$ starting from $0$, we consider the concatenation 
\begin{align}\label{otto}
\varphi:=\alpha_1\star \alpha_2\star \alpha_1^{-1}\star \alpha_2^{-1},
\end{align}
that is a Lipschitz closed curve running the $8$-shaped  figure consisting of $\Gamma_1\cup \Gamma_2$ two times, the first with opposite orientation of the second time. Due to this, it turns out that the current $\varphi_\sharp\jump{\mathbb S^1}$ is null, so that $u_\varphi$ is a Cartesian map, namely
\begin{align*}
	\partial \mathcal G_{u_\varphi}=0\qquad \qquad \text{ in }\mathcal D_1(\Om\times\R^2).
\end{align*}
At the same time \eqref{sing_plateau} still holds, and $\mathcal P(\varphi)$ is nonzero; indeed it turns out that  $\mathcal P(\varphi)$ coincides with two times the area of the smaller circle between $\Gamma_1$ and $\Gamma_2$ (see \cite{Paol,CS}).

We now prove the following interesting observation:

\begin{theorem}\label{teo_main2}
Let $r>0$ and $u_\varphi:B_r(0)\rightarrow\R^2$ the Cartesian map in \eqref{omomap} with $\varphi$ be the double eight curve in \eqref{otto}. Then, it holds
\begin{align}
\mathcal A^{L^1}(u_\varphi,B_r)>\int_{B_r} \sqrt{1+|\nabla u_\varphi|^2}dx.
\end{align}
\end{theorem}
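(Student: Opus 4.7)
The plan is to argue by contradiction, exploiting that $\mathcal P(\varphi)>0$ for the double eight curve while a putative recovery sequence for $L:=\int_{B_r}\sqrt{1+|\nabla u_\varphi|^2}\,dx$ would be forced to have $\int_{B_r}|\det\nabla u_k|\,dx\to 0$. Since $u_\varphi\in W^{1,1}(B_r;\R^2)$ (so $|D^s u_\varphi|(B_r)=0$) and $\det\nabla u_\varphi=0$ a.e.\ (the image of $u_\varphi$ being the $1$-dimensional set $\Gamma_1\cup\Gamma_2$), the inequality $\mathcal A^{L^1}(u_\varphi,B_r)\geq L$ is immediate from \eqref{1.6}. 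Assuming for contradiction equality, I would pick a sequence $(u_k)\subset C^1(B_r;\R^2)$ with $u_k\to u_\varphi$ in $L^1$ and $\mathbb A(u_k,B_r)\to L$. Writing $F_1(u,A):=\int_A\sqrt{1+|\nabla u|^2}\,dx$, the standard $L^1$-lower semicontinuity of this convex functional gives $\liminf_k F_1(u_k,B_r)\geq L$, while the pointwise bound $F_1(u_k,\cdot)\leq \mathbb A(u_k,\cdot)$ forces the reverse in the limit; hence $F_1(u_k,B_r)\to L$ as well.

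The heart of the argument is the elementary identity $\sqrt{a+b}-\sqrt{a}=b/(\sqrt{a+b}+\sqrt{a})$ applied with $a=1+|\nabla u_k|^2$ and $b=|\det\nabla u_k|^2$: using $|\mathcal M(\nabla u_k)|^2=1+|\nabla u_k|^2+|\det\nabla u_k|^2$, one gets
\begin{align*}
\mathbb A(u_k,B_r)-F_1(u_k,B_r)=\int_{B_r}\frac{|\det\nabla u_k|^2}{f_k}\,dx,\qquad f_k\leq 2|\mathcal M(\nabla u_k)|.
\end{align*}
Cauchy--Schwarz then yields
\begin{align*}
\left(\int_{B_r}|\det\nabla u_k|\,dx\right)^2\leq \bigl(\mathbb A(u_k,B_r)-F_1(u_k,B_r)\bigr)\cdot 2\mathbb A(u_k,B_r)\longrightarrow 0,
\end{align*}
so that $\int_{B_r}|\det\nabla u_k|\,dx\to 0$.

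The contradiction must come from a strictly positive lower bound $\liminf_k\int_{B_r}|\det\nabla u_k|\,dx\geq \mathcal P(\varphi)>0$ holding under mere $L^1$-convergence (recall that for our $\varphi$ the number $\mathcal P(\varphi)$ equals twice the area of the smaller of the two disks bounded by $\Gamma_1$ and $\Gamma_2$). Establishing this positivity is the main obstacle: the strict-$BV$ identity $\mathcal{TVJ}(u_\varphi,B_r)=\mathcal P(\varphi)$ recalled earlier cannot be used directly, since $L^1$-convergence is strictly weaker, and one must invoke the sharper Plateau-type analysis of $0$-homogeneous maps in \cite{Paol} (compare also \cite{BCS,C,DP}). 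The geometric content is that the image of $\varphi$ is the wedge $\Gamma_1\cup\Gamma_2$, whose fundamental group is free of rank two, and the commutator class $\varphi=\alpha_1\star\alpha_2\star\alpha_1^{-1}\star\alpha_2^{-1}$ is nontrivial there; therefore no $L^1$-close smooth approximation of $u_\varphi$ can keep its image inside the $1$-dimensional set $\Gamma_1\cup\Gamma_2$, and each $u_k$ must sweep out a planar region of total unsigned area at least $\mathcal P(\varphi)$ in any shrinking neighborhood of the origin, producing the required lower bound.
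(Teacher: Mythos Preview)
Your argument is correct up to and including the proof that $F_1(u_k,B_r)\to L$, and this part coincides with the paper's proof. The gap is in the final step: you need $\liminf_k\int_{B_r}|\det\nabla u_k|\,dx\ge\mathcal P(\varphi)>0$ under \emph{mere $L^1$-convergence} of $u_k$ to $u_\varphi$, and you do not prove this. You explicitly flag it as ``the main obstacle'' and then offer a topological heuristic (the commutator class in $\pi_1(\Gamma_1\cup\Gamma_2)$ is nontrivial, so smooth approximations must sweep area). That heuristic is suggestive but it is not a proof: it does not rule out that a recovery sequence spreads the swept area outside shrinking neighborhoods of the origin or cancels signed contributions in a way that keeps $\int|\det\nabla u_k|$ small, and the references you invoke (\cite{Paol}, \cite{DP}) do not deliver the precise $L^1$ lower bound $\mathcal P(\varphi)$ for this map off the shelf. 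So as written the proposal is incomplete.

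The paper closes the argument differently, and in fact the key observation is already contained in what you proved. From $F_1(u_k,B_r)\to L=\int_{B_r}\sqrt{1+|\nabla u_\varphi|^2}\,dx$ and the weak convergence $\nabla u_k\rightharpoonup\nabla u_\varphi$, Proposition~\ref{prop34} (a strict-convexity/Serrin-type result) yields $\nabla u_k\to\nabla u_\varphi$ \emph{strongly} in $L^1(B_r;\R^{2\times2})$. Strong convergence of gradients implies $|Du_k|(B_r)\to|Du_\varphi|(B_r)$, i.e.\ $u_k\to u_\varphi$ \emph{strictly} in $BV$. At that point the strict-$BV$ identity \eqref{sing_plateau}, which \emph{is} established in the cited literature, gives $\liminf_k\mathbb A(u_k,B_r)\ge\mathcal A(u_\varphi,B_r)=L+\mathcal P(\varphi)>L$, the desired contradiction. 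In short: rather than bounding the Jacobian from below under $L^1$-convergence (hard), the paper upgrades the convergence to strict $BV$ and then uses the lower bound that is already available in that stronger topology. Your Cauchy--Schwarz step, while correct, is unnecessary once you notice this.
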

 In other words we have found a Cartesian map whose area functional, even if relaxed with respect to the $L^1$-topology, is strictly greater than the area of its graph.
 
 \begin{proof}
Assume by contradition that for some $\overline r>0$ it holds
$$\mathcal A^{L^1}(u_\varphi,B_{\overline r})=\int_{B_{\overline r}} \sqrt{1+|\nabla u_\varphi|^2}dx.$$
Let $(u_k)\subset C^1(B_{\overline r};\R^2)$ be a recovery sequence for $\mathcal A^{L^1}(u_\varphi,B_{\overline r})$ and denote $V_k:=\nabla u_k$. We have
\begin{align*}
\limsup_{k\rightarrow \infty}\int_{B_{\overline r}}\sqrt{1+|V_k|^2}dx\leq \lim_{k\rightarrow \infty}\int_{B_{\overline r}}\sqrt{1+|V_k|^2+|\det(\nabla u_k)|^2}=\int_{B_{\overline r}} \sqrt{1+|\nabla u_\varphi|^2}dx
\end{align*}
and, on the other hand, by lower semicontinuty 
\begin{align*}
\liminf_{k\rightarrow \infty}\int_{B_{\overline r}}\sqrt{1+|V_k|^2}dx\geq \int_{B_{\overline r}} \sqrt{1+|\nabla u_\varphi|^2}dx.
\end{align*}
So $\lim_{k\rightarrow \infty}\int_{B_{\overline r}}\sqrt{1+|V_k|^2}dx= \int_{B_{\overline r}} \sqrt{1+|\nabla u_\varphi|^2}dx$; hence by Proposition \ref{prop34} we conclude
$V_k=\nabla u_k\rightarrow \nabla u_\varphi$ strongly in $L^1(B_{\overline r})$. But strong convergence of gradients implies strict convergence in $BV(B_{\overline r};\R^2)$, so by \eqref{sing_plateau} we arrive at
\begin{align*}
\liminf_{k\rightarrow \infty}\mathbb A(u_k,B_{\overline r})\geq \mathcal A(u_\varphi,B_{\overline r}
)=\int_{B_{\overline r}} \sqrt{1+|\nabla u_\varphi|^2}dx+\mathcal P(\varphi)>\int_{B_{\overline r}} \sqrt{1+|\nabla u_\varphi|^2}dx,
\end{align*}
 a contradiction.
\end{proof} 

\section{Appendix}
We collect here some useful results for the above discussion.
\begin{lemma}\label{Fatou=}
	Let $A\subset\R$ be a bounded open set and let $f_k,f\in L^1(A)$ be non-negative functions such that
	$$\lim_{k\rightarrow \infty}\int_Af_kdx=\int_Afdx,\qquad \qquad f(x)=\liminf_{k\rightarrow \infty}f_k(x)\qquad \text{a.e. }x\in A.$$
	 Then $f_k\rightarrow f$ in $L^1(A)$.	
\end{lemma}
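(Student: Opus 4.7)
The strategy is the classical Fatou-with-equality argument: introduce the truncation $g_k:=\min(f_k,f)$, which is dominated by the integrable function $f$, and reduce the problem to two separate convergences that we can control.

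First I would verify that $g_k\to f$ almost everywhere on $A$. Since $g_k\le f$ pointwise, we immediately get $\limsup_k g_k\le f$. For the matching lower bound, I would use the elementary identity $\liminf_k \min(f_k(x),f(x))=\min(\liminf_k f_k(x), f(x))$ for a.e.\ $x$, which together with the hypothesis $\liminf_k f_k=f$ a.e.\ gives $\liminf_k g_k\ge f$ a.e. Hence $g_k\to f$ a.e.

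Second, since $0\le g_k\le f\in L^1(A)$, the dominated convergence theorem yields $g_k\to f$ in $L^1(A)$; in particular $\int_A(f-g_k)\,dx\to 0$. Combining this with the hypothesis $\int_A f_k\,dx\to\int_A f\,dx$ I obtain
\begin{equation*}
\int_A(f_k-g_k)\,dx = \int_A f_k\,dx - \int_A g_k\,dx \longrightarrow \int_A f\,dx-\int_A f\,dx = 0.
\end{equation*}
Note that $f_k-g_k\ge 0$ and $f-g_k\ge 0$ by the definition of $g_k$, so both quantities above are $L^1$-norms.

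Finally, I would conclude by writing the pointwise inequality
\begin{equation*}
|f_k-f| \le |f_k-g_k|+|g_k-f| = (f_k-g_k)+(f-g_k),
\end{equation*}
integrating over $A$ and letting $k\to\infty$; both terms on the right go to zero by the two steps above, hence $\int_A|f_k-f|\,dx\to 0$. There is no real obstacle here: the only subtlety is the elementary liminf identity for the minimum of a sequence with a constant, which can be checked directly by an $\varepsilon$-argument (eventually $f_k>f-\varepsilon$, so eventually $\min(f_k,f)>\min(f,f)-\varepsilon=f-\varepsilon$).
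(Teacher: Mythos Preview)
Your proof is correct and is essentially the same as the paper's argument, only phrased differently: the paper decomposes $\psi_k:=f_k-f$ into its positive and negative parts $\psi_k^\pm$, applies dominated convergence to $\psi_k^-\le f$, and then uses $\int\psi_k\to 0$ to force $\int\psi_k^+\to 0$. Your truncation $g_k=\min(f_k,f)$ encodes exactly this decomposition, since $f-g_k=\psi_k^-$ and $f_k-g_k=\psi_k^+$; in particular your final inequality $|f_k-f|\le(f_k-g_k)+(f-g_k)$ is in fact the equality $|\psi_k|=\psi_k^++\psi_k^-$.
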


\begin{proof}
We prove that $\psi_k:=f_k-f$ tends to $0$ in $L^1(A)$. To this aim, we denote by $\psi_k^+=\psi_k\vee 0$ and $\psi_k^-=(-\psi_k)\vee 0$ the positive and negative parts of $\psi_k$, respectively, so that it is enough to show that they both tends to $0$ in $L^1(A)$. As for the negative part, we readily see that  $\psi_k^-=(f-f_k)\vee0\leq f$, and moreover from $f(x)=\liminf_{k\rightarrow \infty}f_k(x)$ we deduce that $\limsup_{k\rightarrow \infty}f(x)-f_k(x)=0$, hence $\lim_{k\rightarrow \infty}\psi_k^-=0$ a.e. on $A$. Therefore, by Dominated Convergence Theorem $\psi_k^-\rightarrow 0$ in $L^1(A)$. 

This also allows to treat the  positive part, since we know that $0=\lim_{k\rightarrow \infty}\int_{A}\psi_kdx=\lim_{k\rightarrow \infty}\int_{A}\psi^+_kdx$, which implies $\psi_k^+\rightarrow 0$ in $L^1(A)$. The thesis is achieved. 
\end{proof}

The following result can be found in \cite{Fu}:

\begin{lemma}\label{lem_Fu}
	Let $U\subset\R^2$ be a relatively compact set; then for a.e. $t>0$ the set 
	$$\Gamma_t:=\{x\in \R^2:\dist(x,U)=t\},$$
	consists of finitely many Lipschitz curve.
\end{lemma}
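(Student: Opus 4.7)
The plan is to analyze the distance function $d_U(x) := \operatorname{dist}(x,U)$, which is $1$-Lipschitz on $\R^2$. Since $U$ is relatively compact, for every $T>0$ the sublevel set $\{d_U\le T\}$ is closed and bounded, and therefore compact; in particular every $\Gamma_t$ with $t\in(0,T)$ is compact. The goal is to show that, for a.e. $t$, $\Gamma_t$ is actually a compact $1$-Lipschitz submanifold of $\R^2$, and then to use compactness to conclude that it has finitely many connected components, each a closed Lipschitz curve (a Lipschitz Jordan loop).

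First I would gather the basic regularity of $d_U$. By Rademacher's theorem $d_U$ is differentiable a.e., and a standard argument (based on the nearest-point projection) shows that $|\nabla d_U(x)|=1$ at every differentiability point $x\in\R^2\setminus \overline U$. The coarea formula for Lipschitz maps applied on a large ball $B\supset\{d_U\le T\}$ gives
\begin{equation*}
\int_0^T \mathcal H^1(\Gamma_t)\,dt \;=\; \int_{B\cap\{0<d_U<T\}} |\nabla d_U|\,dx \;\le\; \mathcal L^2(B) \;<\;\infty,
\end{equation*}
so that $\mathcal H^1(\Gamma_t)<+\infty$ for a.e.\ $t\in(0,T)$.

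Next I would upgrade this $\mathcal H^1$-finiteness to genuine local $1$-manifold structure on $\Gamma_t$. The key input is a Sard-type theorem in the Lipschitz/semiconcave setting: $d_U$ is semiconcave on $\R^2\setminus \overline U$ (the square $d_U^2$ has a quadratic upper bound against $|\cdot|^2$), and semiconcave functions whose gradient has unit length a.e.\ admit a regularity theory for their level sets saying that, apart from a Lebesgue-negligible set of critical values $t$, every point of $\Gamma_t$ is a point where $\nabla d_U$ exists and is a unit vector. At such points the Lipschitz implicit function theorem provides a local Lipschitz graph representation of $\Gamma_t$. Combined with the compactness of $\Gamma_t$ one extracts a finite cover by such local graphs and glues the arcs into finitely many disjoint closed Lipschitz curves. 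This is the content of the result of Fu cited in the statement.

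The main obstacle is precisely this second step: the coarea/Rademacher input only controls $\Gamma_t$ up to $\mathcal H^1$-negligible sets, whereas the conclusion requires a structural statement holding at \emph{every} point of $\Gamma_t$ (for a.e.\ $t$), in order to apply the implicit function theorem uniformly along $\Gamma_t$ and then pass from local arcs to a finite decomposition via compactness. Upgrading the a.e.\ regularity to pointwise regularity away from a null set of critical values uses in an essential way the semiconcavity of $d_U$ and the eikonal identity $|\nabla d_U|=1$, and this is where Fu's theorem does the heavy lifting.
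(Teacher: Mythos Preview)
Your proposal is correct and takes essentially the same approach as the paper: both defer to Fu's theorem on tubular neighborhoods as the decisive input, with the paper simply stating that for a.e.\ $t$ the sublevel set $U_t=\{\dist(\cdot,U)<t\}$ has Lipschitz boundary (hence $\Gamma_t=\partial U_t$ is a finite union of Lipschitz loops), while you additionally sketch the mechanism---semiconcavity of $d_U$, the eikonal identity, and the Lipschitz implicit function theorem---that underlies Fu's result.
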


\begin{proof}
This follows from the fact that for a.e. $t$ the set $U_t:=\{x\in \R^2:\dist(x,U)<t\}$ is an open set with Lipschitz boundary.
\end{proof}

\textbf{Acknowledgements:}
The author is  member of the Gruppo Nazionale per l'Analisi Matematica, la Probabilit\`a e le loro Applicazioni (GNAMPA) of the Istituto Nazionale di Alta Matematica (INdAM), and joins the project CUP\_E53C22001930001. He also acknowledges  the financial support of 
PRIN 2022PJ9EFL "Geometric Measure Theory: Structure of Singular Measures, Regularity Theory and Applications in the Calculus of Variations". The latter has been funded by the European Union under NextGenerationEU, project number CUP B53D23009400006. Views and opinions expressed are however those of the author(s) only and do not necessarily reflect those of the European Union or The European Research Executive Agency.  Neither the European Union nor the granting authority can be held responsible for them.

\end{document}

\end{document}